\documentclass[11pt]{amsart}
\usepackage[left=1in,right=1in,top=1in,bottom=1in]{geometry}

\usepackage[all,cmtip]{xy}
\usepackage{amssymb}
\usepackage[english]{babel}
\usepackage{mathrsfs}

\newtheorem{theorem}{Theorem}[section]
\newtheorem{lemma}[theorem]{Lemma}

\newtheorem{proposition}[theorem]{Proposition}
\newtheorem{corollary}[theorem]{Corollary}

\theoremstyle{definition}
\newtheorem{definition}[theorem]{Definition}

\newtheorem{example}[theorem]{Example}

\newtheorem{setting}[theorem]{Setting}
\newtheorem{remark}[theorem]{Remark}

\newtheorem{question}[theorem]{Question}

\newtheorem{hypothesis}[theorem]{Hypothesis}

\newtheorem{claim}{Claim}
\newtheorem{case}{Case}
\newtheorem{subcase}{Subcase}
\counterwithin{subcase}{case}

\def\B{\mathcal{B}}

\def\N{\mathbb{N}}

\def\Z{\mathbb{Z}}

\def\Cyc{\mathrm{Cyc}}
\def\grp#1{\langle{#1}\rangle}
\def\ssgp{$\mathrm{SSGP}$}

\def\Z{\mathbb{Z}}

\def\P{\mathbb{P}}
\def\F{\mathbb{F}}

\def\ssgp{\mathrm{SSGP}}

\def\assgp{\mathrm{ASSGP}}
\def\nss{\mathrm{NSS}}

\def\nsns{\mathrm{NSnS}}
\def\minap{\mathrm{MinAP}}
\def\MAP{\mathrm{MAP}}

\def\tdlc{t.d.l.c}

\usepackage{color}
\newenvironment{revvy}{\color{black}}{}
\newenvironment{revdw}{\color{blue}}{}
\newenvironment{revsg}{\color{red}}{}

\def\by{\begin{revvy}}
\def\ey{\end{revvy}}
\def\bd{\begin{revdw}}
\def\ed{\end{revdw}}
\def\bg{\begin{revsg}}
\def\eg{\end{revsg}}

\def \Zet[#1]{\lceil #1 \rceil}  

\def\dash#1{\bar{#1}}

\def\wprod{*}
\def\fprod{\cdot}

\def\e#1{e}

\def\nice#1{\mathscr{#1}}

\def\lett{\mathrm{lett}}

\def\wprod{*}

\def\fns{finite neighbourhood system}

\begin{document}
\title[Topologically simple and metrizable free groups]{Topologically simple and  metrizable free groups with no non-trivial NSS quotients}

\author[V.H Ya\~nez]{V\'{\i}ctor Hugo Ya\~nez}
\address{School of Mathematical Sciences and LPMC, Nankai University, Tianjin 300071, P.R. China}
\email{vhyanez@nankai.edu.cn}
\thanks{The author acknowledges the partial support of their research by the Fundamental Research Funds for the Central Universities, the National Natural Science Foundation of China (NSFC) grant 12271263, and the China Postdoctoral Science Foundation Tianjin Joint Support Program under
Grant Number 2025T001TJ}

\begin{abstract}
A topological group $G$ is said to have \emph{no small subgroup} (resp. \emph{no small normal subgroup}) if it admits an open neighbourhood of the identity containing no non-trivial subgroup (resp. \emph{normal subgroup}) of $G$. These properties are usually denoted by $\nss$ (and respectively $\nsns$). The $\nss$ property plays an important historical role in the solution to the fifth problem of Hilbert due to Gleason, Montgomery-Zippin and Yamabe for the characterization of Lie groups. In 2019, Shakhmatov and the author proved that a free group $F$ with countably infinitely many generators admits a metric Hausdorff group topology $\mathscr{T}$ which satisfies the so-called \emph{algebraic small subgroup generating property} $(\assgp)$: for each open neighbourhood $U$ of the identity of $F$, the family of subgroups contained in $U$ algebraically generates $F$.  In particular $(F,\mathscr{T})$ admits no non-trivial continuous homomorphisms to either $\nss$ or locally compact groups, making it minimally almost periodic. In this paper, we prove that $(F, \mathscr{T})$ can be made topologically simple; namely, $(F, \mathscr{T})$ contains no closed normal subgroups other than $\{e\}$ and $F$. In particular, this implies that $F$ satisfies the  \emph{no small normal subgroup} ($\nsns$) property. %
\end{abstract}

\subjclass[2010]{Primary: 22A05; Secondary: 03E99, 06A06, 20E05, 54E35, 54H11}

\keywords{topologically simple group, free group, minimally almost periodic group, small subgroup generating property}

\maketitle

\section{Introduction}

The topological spaces appearing in this paper shall be assumed to be Hausdorff. A topological group $G$ is said to have \emph{no small normal subgroup ($\nsns$)} if $G$ contains an open neighbourhood of the identity which \by contains no normal subgroup of $G$ other than $\{e\}$. \ey We say instead that $G$ has \emph{no small subgroup ($\nss$)} whenever the open neighbourhood $U$ of $G$ can be selected to contain no subgroups of $G$ other than $\{e\}$. Naturally, every $\nss$ group is $\nsns$, and a group is $\nss$ if and only if $G$ admits an open neighbourhood $U$ of the identity which contains no element $x \in G$ such that $\grp{x} \subseteq U$ (i.e, $U$ contains no \emph{cyclic elements}).

A topological group $G$ is said to be \emph{minimally almost periodic} ($\minap$) if it admits no non-trivial continuous homomorphism to a compact group. In the opposite direction, a topological group $G$ is \emph{maximally almost periodic} ($\MAP$) if its points can be separated by its family of continuous homomorphisms to compact groups. These two families of groups were introduced in the context of Functional Analysis by von Neumann \cite{vN,vN-W} as a generalization of previous work of Bohr on the topic of point-separation via the (complex-valued) almost periodic functions of a group. Any compact group is clearly $\MAP$, and it is a consequence of the Peter-Weyl theorem that any commutative and locally compact group is $\MAP$. On the other hand, the construction of $\minap$ groups is historically known to be difficult.

Some first examples of $\minap$ groups were achieved in the context of Functional Analysis by considering real vector spaces with no continuous linear functionals to the real line (see \cite{Day}). A functorial example is the so-called Hartman-Mycielski \cite{HM} construction $\mathrm{HM}(G)$ of any given topological group $G$, which was proven to be $\minap$ by Dierolf and Warken \cite{Dierolf} in 1978. Each topological group $G$ has a natural embedding into its corresponding Hartman-Mycielski construction $\mathrm{HM}(G)$ as a closed subgroup of it. $\mathrm{HM}(G)$ is known to carry a substantial amount of meaningful topological, combinatorial and algebraic properties that made it an important object in topological group theory, mathematical logic and functional analysis. In topological dynamics, the $\minap$ groups appear when one considers the \emph{extremely amenable groups}: namely, the topological groups $G$ for which any continuous action of $G$ on a compact Hausdorff space admits a fixed point. Any extremely amenable group is minimally almost periodic. The celebrated Kechris-Pestov-Todor\v{c}evi\'{c} \cite{KPT} correspondence shows a beautiful framework to fully describe the extreme amenability for certain automorphism groups of countable structures. This result has been a cornerstone for modern theory of topological dynamics and provides an immense selection of $\minap$ groups. In this theory a major open problem due to Glasner and Pestov is the following: does there exist an example of a \emph{commutative} topological group $G$ which is $\minap$ but not extremely amenable?  (see \cite[Section 3.5]{P2}). A more detailed history of this problem and of extreme amenability in the context of topological dynamics is given in a paper of Pestov \cite{P}.

A topological group $G$ is said to be \emph{topologically simple} whenever the only closed and normal subgroups of $G$ are the trivial group and $G$ itself. A topologically simple group with respect to the \emph{discrete topology} is called a \emph{simple group}. Topologically simple groups have historically been powerful tools for producing a variety of counter-examples in topological group theory. The majority of modern examples of topologically simple groups usually belong to either one of the following two categories: the locally compact and totally disconnected (\tdlc) groups, and the $\minap$ (alternatively, extremely amenable) groups.     

For simplicial trees, a combinatorial construction of simple groups due to Tits \cite{Tits} has been profoundly influential in the theory of topologically simple {\tdlc} groups; an English summary of the proof, and a short account of its impact is recorded in a manuscript of Boudec \cite{Boudec}. In more recent literature, the work of Banks, Elder and Willis \cite{Willis1} provides a generalization of Tits' combinatorial algorithms. In the general setting, a large effort in the construction of topologically simple t.d.l.c groups is due to Willis \cite{Willis2}.

Local compactness is quite often incompatible with minimal almost periodicity: it is a consequence of the Peter-Weyl theorem that an \emph{Abelian} topological group is $\minap$ if and only if it admits no non-trivial continuous homomorphisms to locally compact groups. In the general case, extreme amenability is directly incompatible with local compactness due to a result of Veech \cite{Veech}: each locally compact group admits a free action on a compact space. As a consequence, the only continuous homomorphisms from \by extremely amenable groups \ey to locally compact groups are the trivial homomorphisms. Even under the contrasting condition of the $\minap$ property, topological simplicity is still obtainable in this realm:

\begin{example}
Let $\Z$ be the integers. If $\mathcal{T}$ is a $\minap$ group topology for $\Z$, then $(\Z,\mathcal{T})$ is topologically simple: this is due to any non-trivial subgroup of $\Z$ being of finite index.
\end{example}

In the non-Abelian realm, the prototypical example is the infinite permutation group:

\begin{example}
Let $S_\infty$ be the permutation group of the natural numbers $\N$ equipped with the point-wise convergence topology. Then, $S_\infty$ is a Polish topologically simple group which is not $\nss$. As a consequence, $S_\infty$ admits no non-trivial continuous homomorphism to an $\nss$ group (in particular, to any finite-dimensional unitary matrix group), and so $S_\infty$ is $\minap$.
\end{example}

	Given a countable distance-value set $\Delta$, we denote by $\mathbb{U}_\Delta$ the $\Delta$-metric Urysohn space \cite{Gao1}. When its isometry group is equipped with the so-called \emph{permutation group} topology (which we shall denote as $\mathrm{Aut}(\mathbb{U}_\Delta)$), it was proven to be topologically simple \cite{Gao1}. A recent result of M. Etedadialiabadi, S. Gao, F. Li and R. Li \cite{Gao2} proves that when the isometry group of $\mathbb{U}_\Delta$ is given the topology of \emph{point-wise convergence in metric} (denoted as $\mathrm{Iso}(\mathbb{U}_\Delta)$), it becomes \emph{extremely amenable}. Naturally, $\mathrm{Iso}(\mathbb{U}_\Delta)$ inherits topological simplicity by virtue of carrying a coarser topology than $\mathrm{Aut}(\mathbb{U}_\Delta)$. One of their main results in \cite{Gao2} proves there exists a huge variety of extremely amenable and topologically simple groups:

\begin{theorem}[{\cite[Theorem 1.3]{Gao2}}] \label{gao:result}
There exist $\mathfrak{c}$-many pairwise non-isomorphic metrizable, extremely amenable (and topologically simple) separable groups of the form $\mathrm{Iso}(\mathbb{U}_\Delta)$. 
\end{theorem}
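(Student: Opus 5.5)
The statement is a cited result (Theorem 1.3 of Etedadialiabadi, Gao, Li and Li), and I would establish it in three stages. First, build a continuum of distance-value sets. Second, verify extreme amenability and topological simplicity for each of them. Third, distinguish the resulting groups up to isomorphism. For the first stage, I would take countable distance-value sets $\Delta\subseteq(0,\infty)$ that are closed under truncated addition, $a,b\in\Delta\Rightarrow\min(a+b,\sup\Delta)\in\Delta$ suitably interpreted. This closure guarantees that the class of finite $\Delta$-metric spaces is a Fraïssé class, so the $\Delta$-Urysohn space $\mathbb{U}_\Delta$ exists and is ultrahomogeneous. To get $\mathfrak{c}$ many such sets, I would fix an infinite independent family of reals, for instance algebraically independent ones, and for each subset $A\subseteq\N$ let $\Delta_A$ be the additive semigroup generated by $\{r_n:n\in A\}$ together with $1$.

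For extreme amenability of $\mathrm{Iso}(\mathbb{U}_\Delta)$ with the pointwise-convergence-in-metric topology, I would use the Kechris--Pestov--Todor\v{c}evi\'{c} correspondence in a metric/approximate form. The aim is to show that $\mathrm{Iso}(\mathbb{U}_\Delta)$ is the completion-limit of an increasing chain of Lévy-type or amenable subgroups. Concretely, I would exhibit a dense subgroup that is a directed union of compact (finite) groups, obtained from Hrushovski/Solecki-type extension property for partial isometries of finite $\Delta$-metric spaces. I would then use the measure concentration (Lévy family) of these finite groups, equipped with normalized Hamming-type metrics, with respect to the coarser metric topology. The key quantitative input is that small metric perturbations become "free" because $\Delta$ accumulates at $0$ in this coarse topology; this is what distinguishes $\mathrm{Iso}$ from $\mathrm{Aut}$, which is not extremely amenable. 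This concentration step is the main obstacle. I would try to adapt Pestov's proof for $\mathrm{Iso}(\mathbb{U})$ via Vershik's approximation, replacing rational distances by $\Delta$.

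Topological simplicity is then inherited: \cite{Gao1} gives that $\mathrm{Aut}(\mathbb{U}_\Delta)$ is topologically simple, and the identity map $\mathrm{Aut}\to\mathrm{Iso}$ is a continuous bijection. A closed normal subgroup of $\mathrm{Iso}$ is therefore closed and normal in $\mathrm{Aut}$, hence trivial or everything. Metrizability and separability are immediate, since $\mathbb{U}_\Delta$ is countable and the pointwise metric topology has the countable base of basic neighbourhoods determined by finite tuples and rational radii.

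Finally, I would distinguish the groups via an invariant of the group that recovers $\Delta$, namely the set of "distance values" seen through stabilizers. For $x,y\in\mathbb{U}_\Delta$, the pair of point stabilizers $(G_x,G_y)$ determines the orbit of $(x,y)$, and orbits of pairs correspond exactly to $\Delta$. I would show that point stabilizers are characterized group-theoretically, for instance as maximal subgroups of a certain closed type, so that a topological isomorphism induces a bijection of $\mathbb{U}_\Delta$. I would then check that this bijection preserves the ordering structure of distances, determined by containment relations $G_x\cap G_y\subseteq\ldots$. This yields an order isomorphism $\Delta_A\cong\Delta_B$ compatible with addition, forcing $A=B$ by independence. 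If full recovery proves hard, I would instead argue by counting: each isomorphism class contains at most countably many $\Delta$ up to additive order-isomorphism, leaving $\mathfrak{c}$ classes.
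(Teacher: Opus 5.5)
The paper does not prove this statement. It is quoted from \cite{Gao2}, and the only argument the paper itself adds is exactly your topological-simplicity paragraph: a closed normal subgroup of $\mathrm{Iso}(\mathbb{U}_\Delta)$ is closed and normal in the finer topology of $\mathrm{Aut}(\mathbb{U}_\Delta)$, which is topologically simple by \cite{Gao1}. That paragraph and your metrizability/separability remark are fine. The rest of your outline is not yet a proof.

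Your family $\Delta_A$ does not guarantee $\inf\Delta_A=0$ (take $A$ finite, or generators bounded away from $0$), and this condition is indispensable. If $\inf\Delta>0$, then $\mathbb{U}_\Delta$ is uniformly discrete and $\mathrm{Iso}(\mathbb{U}_\Delta)=\mathrm{Aut}(\mathbb{U}_\Delta)$. This group acts continuously on the compact space of linear orders of $\mathbb{U}_\Delta$, and a fixed point is impossible because, by ultrahomogeneity, some isometry swaps two points. So you must take $r_n\to 0$ and $A$ infinite, or add a fixed null sequence of generators to every $\Delta_A$.

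The extreme amenability step is only announced (``the main obstacle''). You can bypass the L\'evy-family machinery. For an additive semigroup $\Delta$ with $\inf\Delta=0$ (hence dense in $(0,\infty)$), $\mathbb{U}_\Delta$ has the approximate extension property, so its completion is the Urysohn space $\mathbb{U}$. Then $\mathrm{Iso}(\mathbb{U}_\Delta)$ is a dense subgroup of $\mathrm{Iso}(\mathbb{U})$ whose subspace topology is precisely the pointwise-in-metric topology. Since extreme amenability passes to dense subgroups, Pestov's theorem for $\mathrm{Iso}(\mathbb{U})$ gives this step.

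The genuine gap is the non-isomorphism step, which is the real content of ``$\mathfrak{c}$-many pairwise non-isomorphic''. You assert two things without supplying a mechanism for either: that point stabilizers are group-theoretically recognizable, and that the induced bijection of spaces yields an additive order-isomorphism $\Delta_A\to\Delta_B$.

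\begin{itemize}
\item When $\inf\Delta=0$, the stabilizers $G_x$ are closed but not open, so the standard reconstruction through open subgroups of non-archimedean groups is unavailable. They are maximal subgroups (the action is primitive), but nothing shows that a topological isomorphism must carry them to point stabilizers.
\item Even granting a bijection $\pi$ conjugating the actions, you only get a bijection $\varphi:\Delta_A\to\Delta_B$ with $d_B(\pi x,\pi y)=\varphi(d_A(x,y))$. Containments such as $G_x\cap G_y\subseteq G_z$ hold only for $z\in\{x,y\}$, so they say nothing about order or addition in $\Delta$.
\item Your fallback count is circular: ``at most countably many $\Delta$ per isomorphism class'' is itself the missing rigidity statement.
\end{itemize}

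The difficulty is real. Once $\inf\Delta_A=0$, all the groups $\mathrm{Iso}(\mathbb{U}_{\Delta_A})$ are dense subgroups of the same Polish group $\mathrm{Iso}(\mathbb{U})$, with the same completion. A topological isomorphism between two of them therefore extends to an automorphism of $\mathrm{Iso}(\mathbb{U})$. What you need is enough control over how such automorphisms move these subgroups to recover $\Delta$ up to a scaling $\lambda$, compare dense subgroups of $\mathbb{R}$. Only once $\lambda\Delta_A=\Delta_B$ is established does your algebraic-independence argument correctly give $A=B$.
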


The following notion was first defined explicitly in \cite{SY_FreeSSGP}:

\begin{definition} \label{assgp:def}
Let $G$ be a topological group. $G$ is said to satisfy the \emph{algebraic small subgroup generating property ($\assgp$)} whenever $G = \grp{\Cyc(U)}$ holds for each open neighbourhood $U$ of the identity of $G$.
\end{definition}

The namesake of the $\assgp$ property comes from a pre-existing \emph{small subgroup generating property} ($\ssgp$) of Gould \cite{CG, Gould,Gould2} which replaces equality by topological density in Definition \ref{assgp:def}, \by and which ultimately captures the topological essence of Prodanov's algorithm \ey \cite{Prodanov} for producing elementary $\minap$ topologies on abelian groups.  A more complete dedicated background on the so-called \emph{small subgroup generating properties}  can be found in a paper of Dikranjan and Shakhmatov \cite{DS_SSGP}, in the currently available open-access survey of the author in \cite{YVH_MathSoc}, in the author's PhD thesis \cite{YVH_Thesis} and the recent \cite{YVH_Hilbert}. We note the following: 
 
\begin{remark} \label{this:remark:dec26:2024}
If $G$ is an $\assgp$ (or $\ssgp$) topological group, then $G$ admits no non-trivial continuous homomorphism to an $\nss$ group. 
\end{remark}

As a consequence of Remark \ref{this:remark:dec26:2024}, we now have: \begin{equation}
\assgp \rightarrow \ssgp \rightarrow \minap \leftarrow \text{extremely amenable}.
\end{equation}

The following question is essentially due to Comfort and Gould:

\begin{question}[{\cite[Question 5.5]{CG}}]
Is every $\assgp$ (or $\ssgp$) topological group also extremely amenable?
\end{question}

There exist groups which admit $\minap$ group topologies but no $\assgp$ group topologies. No finite power of the integers $\Z$ admits an $\assgp$ group topology (see \cite[Theorem 3.12]{CG}). The symmetric group $S_\infty$ also does not admit an $\assgp$ group topology (see \cite[Example 5.4]{DS_SSGP}).

Before its explicit definition, the $\assgp$ property had been employed implicitly by Dierolf and Warken \cite{Dierolf} to prove that %
the Hartman-Mycielski construction $\mathrm{HM}(G)$ of a topological group $G$ %
is $\minap$. %
To the best knowledge of the author, group topologies with the original unaltered property from Definition \ref{assgp:def} had not appeared in print beyond Dierolf and Warken's original until recent efforts by Shakhmatov and the author in \cite{SY_FreeSSGP}.

The main goal of this paper is to give a proof of the following result:

\begin{theorem} \label{this:main:thm}
Let $X$ be a countably infinite set, and let $F(X)$ be the free group over $X$.
Then $F(X)$ admits a metric group topology $\mathcal{T}$ which is topologically simple and satisfies the $\assgp$.
\end{theorem}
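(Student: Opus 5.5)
The plan is to revisit the construction of Shakhmatov and the author in \cite{SY_FreeSSGP} and enlarge it so that it also kills every proper closed normal subgroup. Enumerate $X=\{x_n:n\in\N\}$. The topology $\mathcal{T}$ will be built from a decreasing sequence $(W_k)_{k\in\N}$ of symmetric subsets of $F(X)$ containing $e$, with $W_{k+1}^2\subseteq W_k$, $\bigcap_k W_k=\{e\}$ and $gW_{k'}g^{-1}\subseteq W_k$ for each $g$ and some $k'$ depending on $g,k$. By the standard Birkhoff--Kakutani argument such a sequence is a base at $e$ for a metrizable Hausdorff group topology. Following \cite{SY_FreeSSGP}, each $W_k$ will be described combinatorially as the set of words admitting a suitable ``length/letter'' decomposition (a finite neighbourhood system) into products of powers $w^{m}$, with exponents bounded by a weight that decays in $k$, of certain designated elements.

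To get the $\assgp$, each $W_k$ must contain full cyclic subgroups $\grp{c}$ with $F(X)=\grp{\Cyc(W_k)}$. I would again use that for each generator $x_n$ and each $k$ there is a word of the form $x_n = c_1c_2$ with $\grp{c_1},\grp{c_2}\subseteq W_k$, the $c_i$ involving fresh letters $x_m$ with $m$ large; this is the mechanism already used in \cite{SY_FreeSSGP}, and I will reuse its verification of the $\assgp$ and of Hausdorffness verbatim where possible.

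For topological simplicity, I will show that for every $g\neq e$ the normal closure $N_g$ of $g$ is dense, i.e.\ $F(X)\subseteq N_g W_k$ for every $k$. Then every nontrivial normal subgroup has full closure. It suffices to handle generators: for each $x_n$ and each $k$ I need $h\in N_g$ with $h^{-1}x_n\in W_k$. The idea is to add to the designated elements used in defining $W_k$ the words
\[
x_n^{-1}\,\prod_{i} u_i g^{\pm1}u_i^{-1},
\]
for conjugators $u_i$ involving letters $x_m$ with $m$ large compared with $k$, $n$ and the support of $g$. Since $F(X)$ is countable, the pairs $(g,n)$ can be diagonalised along $k$. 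This simultaneously makes $W_k$ contain $\grp{c}$ for the right elements and an element of $x_n^{-1}N_g$. One must also keep the system invariant under conjugation: enlarge the designated family to be closed under conjugation by short words, as in the original construction.

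The main obstacle is Hausdorffness. I have to show that adding these extra designated elements does not force some fixed $w\neq e$ into every $W_k$. In \cite{SY_FreeSSGP} this is controlled by a norm-type bound: any product of designated powers whose letters all have index at least $m_k$ cannot cancel to a short nontrivial word, except along a controlled pattern. My first attempt is to choose the conjugators $u_i$ in fresh letters $x_{m}$, with $m$ growing very fast in $k$, and to assign them vanishing weight. A reduced-word cancellation argument, counting occurrences of these fresh letters, should then show that a word lying in $W_k$ for all $k$ has no low-index letters and hence is $e$. Checking that this letter-counting survives the interaction between the $g$-conjugates and the cyclic-subgroup generators is where the real work lies.
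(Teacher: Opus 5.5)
Your outline has the right shape: put at the deepest level an element of $\mathrm{Conj}(g)\cdot h$ built from fresh letters, meet countably many such requirements by diagonalisation, and conclude that normal closures are dense. (The paper in fact shows that each single $\mathrm{Conj}(g)$ is dense.) However, the outline omits the step that is the actual content of the proof, and you say so yourself (``should then show'', ``where the real work lies''). The missing statement is this: the new elements, together with everything condition (3$_{\nice{U}}$) then forces (products of two elements of the next level, conjugation by every letter including the fresh ones), add no new word of the old alphabet to any level.

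In the paper this is an invariant maintained at every finite stage, not something extracted from $\bigcap_k W_k$ at the end. Hausdorffness is then automatic: Lemma \ref{this:hausdorff:lemma} holds for any countable refinement of $\mathscr{D}_{T_2}$. A word $w\neq e$ is excluded from $U^p_{n^p}$ at a stage $p$ whose alphabet contains its letters, and condition (iii$_{\nice{V}}^{\nice{U}}$) of Definition \ref{SPSb} forbids ever re-admitting it. The whole difficulty is therefore the density of $E_{n,S,g,h}$, which reduces to Lemma \ref{this:corollary:oct312024:main}. There, for a system $\{U_i:i\le n\}$ on a finite alphabet $X'$, one takes $g_0=f\,g\,f^{-1}h$ with $f=y_1\cdots y_k$ a product of $k\ge 2^{|X'|\cdot 4^n}$ distinct fresh letters, forms the $\grp{g_0}$-enrichment, and proves $V_i\cap F(X')=U_i$.

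That proof needs two ingredients absent from your sketch:
\begin{itemize}
\item[(a)] The letter-count bound (Corollary \ref{trivial:enrichment:bound}, Lemma \ref{this:lemma:oct26}). It shows that no non-trivial power of $g_0$ already lies in the $\{e\}$-enrichment, so the reduction map $\eta$ of Lemma \ref{reduction:lemma} is available.
\item[(b)] The cancellation result (Lemma \ref{this:new2024:cancelation:lemma}, Corollary \ref{workingman:lemma:7.2}). It uses a letter $y_{j_0}$ of $f$ occurring in no other factor of the canonical representation, and shows that the $g_0$-powers can be deleted from any such product lying in $F(X')$.
\end{itemize}
Your alternative, that a word lying in every $W_k$ ``has no low-index letters'', has no invariant behind it. Moreover, every $W_k$ does contain words in low-index letters: your own designated elements involve $x_n$ and the letters of $g$.

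Two concrete claims in your sketch are also wrong.

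First, what matters is the number of fresh letters in the conjugator, not merely their index. Already when $h=e$, suppose $y\,g\,y^{-1}$ with a single fresh letter $y$ is put into level $n$. Then $y^{-1}\cdot(ygy^{-1})\cdot e\cdot y=g$ lies in level $n-1$ by (3$_{\nice{U}}$), which destroys the old trace whenever $g\notin U_{n-1}$. The conjugator must be long compared with the depth of the system; the paper takes at least $2^{|X'|\cdot 4^n}$ fresh letters so that the letter count above works.

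Second, the $\assgp$ mechanism you quote cannot work. If for every $k$ one had $x_n=c_1c_2$ with $\grp{c_1},\grp{c_2}\subseteq W_k$, then $x_n\in W_k^2\subseteq W_{k-1}$ for all $k$. Hence $x_n\in\bigcap_kW_k=\{e\}$, a contradiction. In any Hausdorff $\assgp$ group, the number of factors from $\Cyc(U)$ needed to write a fixed $x\neq e$ must tend to infinity as $U$ shrinks. Relatedly, describing $W_k$ through powers whose exponents are bounded by a decaying weight is at odds with $W_k$ containing full cyclic subgroups.
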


\subsection{Organization of the paper.} 

In Section 2, we present the poset-based topologization framework for the free group. We establish that finding a suitable family of dense subsets in this poset is sufficient to realize both the $\assgp$ and topological simplicity, effectively reducing the proof of Theorem \ref{this:main:thm} to Lemma \ref{topological:simplifity:set}.

In Section \ref{sec:nghbd}, we collect the notions and lemmas from \cite{SY_FreeSSGP} that shall be used for the construction of \fns\ extensions. Of particular interest to us is Definition \ref{canonical:representation:definition}. Essentially, given some \fns\ $\nice{U} = \{U_i : i \leq n\}$, our goal is to define a certain ``minimal'' extension for it which is made out of adding a number of cyclic subgroups to the last subset $U_n$; these cyclic subgroups are each generated by a ``foreign'' word from a fixed finite set $B$. We call such an extension a \emph{$B$-enrichment of $\nice{U}$}. To ensure that a $B$-enrichment $\nice{V} = \{V_i : i \leq n\}$ is itself \by a {\fns}, \ey we are \emph{required} to close the family under conjugation from the expanded alphabet $Y$ which is used to write all words from $B$. This ``closing off'' procedure produces certain ``canonical'' representations for any word appearing in $V_i$ as a product of words appearing in $V_{i+1}$, along with conjugations by the letters in $Y$. This phenomenon is precisely what is captured in Definition  \ref{canonical:representation:definition}, and it becomes crucial to fix these canonical representations for our more technical calculations later in the paper.

Section \ref{calculation:section} collects the main calculation labor that shall be needed to extend our {\fns} appropriately. Meanwhile, Section \ref{extension:section} applies our calculations to perform the appropriate extension. Let us try to explain what we are trying to do in these two sections. The main idea here is the following: assume we are given arbitrary words $g,h \in F(X)$ of which $g$ is assumed non-trivial. Then, assume we are given an arbitrary {\fns} $\nice{U} = \{U_i : i \leq n \}$ on a finite sub-alphabet $X'$ of $X$. Our goal is to produce an extension $\nice{V} = \{V_i : i \leq n \}$ of $\nice{U}$ on a larger finite sub-alphabet $Y$ of $X$ containing $X'$ with the following property: there exists some word $f \in F(Y)$ such that $f \cdot g \cdot f^{-1} \cdot h \in V_n$. This property is specifically designed to ensure that the following holds in our final constructed topology $\mathcal{T}$ for $F(X)$: \emph{for every $g \in F(X) \setminus \{e\}$, the set of conjugates $\{f \cdot g \cdot f^{-1} : f \in F(X)\}$ is dense in $F(X)$}.

Finally, Section \ref{final:final:proofsection} concludes the proof of Theorem \ref{this:main:thm} by proving that the specific subsets ensuring this dense conjugacy are indeed dense in our partial order (Lemma \ref{topological:simplifity:set}).

\section{A poset-based topologization framework for the free group}

The goal of this section is to present our partial-order framework which is used to produce group topologies on a countably generated free group $F(X)$. This technique was developed by Shakhmatov and the author in \cite{SY_FreeSSGP}. Our appeal here is to present our framework in such a generality where one can induce new topological properties by finding suitable (and at most countably many) dense sets (in the poset-theoretic sense) which shall induce the desired property from a so-called \emph{finite neighbourhood system level}.

\subsection{Basic notation and terminology}

We denote by $\N$ the set of natural numbers and we let $\N^+=\N\setminus\{0\}$. 
Given a set $X$ we denote by $[X]^{< \omega}$ the family of finite subsets of $X$. 

Let $G$ be a group.
For subsets $A,B$ of 
$G$, we let 
$$
AB=\{ab:a\in A, b\in B\}
\
\text{ and }
\
A^{-1}=\{a^{-1}:a\in A\}. 
$$
We say that a subset $A$ of $G$ is \emph{symmetric} if and only if $A = A^{-1}$.
For a subset $A$ of 
$G$, we
denote by $\grp{A}$ the smallest subgroup of $G$ containing $A$.
To simplify the notation, 
we write $\grp{x}$ instead of $\grp{\{x\}}$ for $x\in G$. Given a group $G$ and an arbitrary subset $X$ of $G$ we set $\Cyc(X) := \{x \in G : \grp{x} \subseteq X \}$.

\begin{definition}
Given a set $X$ we denote by $F(X)$ the free group over $X$. In the context of $F(X)$,  the set $X$ shall be referred to as the \emph{alphabet} of $F(X)$, and its elements shall be called its \emph{letters}. The group operation $\fprod$ for $F(X)$ is considered in the fashion, making $(F(X), \fprod)$ a group.
\end{definition}

We follow the setting and terminology previously used in \cite[Section 3]{SY_FreeSSGP} to describe products and cancelations in a free group in more detail. For convenience of the reader, we now provide a summarized list of what we consider to be the most essential terms and facts that will be required during our future arguments.

\begin{definition}[{\cite[Definition 3.2]{SY_FreeSSGP}}]
Let $X$ be a set and $F(X)$ be the free group over $X$. Given two words $v,w \in F(X)$ we shall denote by $v \wprod w$ the result of the product $v \fprod w$ whenever said product is exactly the concatenation of $v$ and $w$ in irreducible forms. %
\end{definition}

\begin{definition}[{\cite[Definition 3.11]{SY_FreeSSGP}}]
Let $X$ be a 
set. For a word $w=x_1^{\varepsilon_1} \dots x_n^{\varepsilon_n}\in W(X) \setminus \{e\}$, 
$$
\lett(w)=\{x_1,\dots,x_n\}
$$
denotes the set of all letters $x_i$ appearing in $w$. We also let 
$\lett(\e{W})=\lett(\emptyset)=\emptyset$. In the literature, the set $\lett(w)$ is also referred to as the \emph{support of $w$}. 
\end{definition}

The following summarizes basic properties of the sets of letters:

\begin{lemma}[{\cite[Lemma 3.12]{SY_FreeSSGP}}]
\label{letters:and:products}
\begin{itemize}
\item[(i)]
$\lett(v \cdot w) \subseteq \lett(v) \cup \lett(w)$ for all $v,w \in F(X)$.
\item[(ii)]
$\lett(a_1 \cdot a_2 \cdots \cdot a_m)\subseteq \bigcup_{l=1}^m \lett(a_l)$ whenever $m \in \N^+$ and 
$a_1,a_2,\dots,a_m\in F(X)$.
\end{itemize}
\end{lemma}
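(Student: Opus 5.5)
Both parts are immediate from the definition of the reduced form of a product in $F(X)$. For (i), I will write $v$ and $w$ in irreducible form, $v=x_1^{\varepsilon_1}\cdots x_n^{\varepsilon_n}$ and $w=y_1^{\delta_1}\cdots y_m^{\delta_m}$. The product $v\fprod w$ is obtained from the concatenated word $x_1^{\varepsilon_1}\cdots x_n^{\varepsilon_n}y_1^{\delta_1}\cdots y_m^{\delta_m}$ by successive free cancellations. Each cancellation deletes an adjacent pair $z^{\eta}z^{-\eta}$ and never introduces a new symbol. By induction on the number of cancellation steps, every word produced along the way, and in particular the final reduced form of $v\fprod w$, has all its letters among $\{x_1,\dots,x_n,y_1,\dots,y_m\}=\lett(v)\cup\lett(w)$. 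The degenerate cases $v=e$ or $w=e$ are covered by the convention $\lett(e)=\emptyset$. More concretely, one can identify the reduced product as $v'\wprod w'$, where $v=v'\wprod c$ and $w=c^{-1}\wprod w'$ for the maximal cancelling segment $c$. Then $\lett(v\fprod w)=\lett(v')\cup\lett(w')\subseteq\lett(v)\cup\lett(w)$, because $v'$ is an initial segment of $v$ and $w'$ a terminal segment of $w$.

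For (ii), I will induct on $m$. The case $m=1$ is trivial, and $m=2$ is (i). For the inductive step, I will write $a_1\fprod\cdots\fprod a_{m+1}=(a_1\fprod\cdots\fprod a_m)\fprod a_{m+1}$. Applying (i) gives $\lett(a_1\fprod\cdots\fprod a_{m+1})\subseteq\lett(a_1\fprod\cdots\fprod a_m)\cup\lett(a_{m+1})$, and the inductive hypothesis bounds the first term by $\bigcup_{l=1}^m\lett(a_l)$.

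There is no genuine obstacle here. The only point requiring care is the well-definedness of reduced forms, namely that the result of free reduction is independent of the order of cancellations. This is the standard normal form theorem for free groups, and it is implicit in the paper's notation $\wprod$ from \cite[Definition 3.2]{SY_FreeSSGP}, so I will simply cite it.
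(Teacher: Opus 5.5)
Your proof is correct: the reduced form of $v\fprod w$ is $v'\wprod w'$, with $v'$ an initial segment of $v$ and $w'$ a terminal segment of $w$, which gives (i), and induction on $m$ gives (ii). The paper gives no proof of its own here; it simply quotes \cite[Lemma 3.12]{SY_FreeSSGP}, and your argument is the standard one behind that citation.
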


\subsection{Finite neighbourhood systems}

\begin{definition}
\label{SPSa}
Let $X$ be a 
set.
A {\em \fns\ of $F(X)$\/} is a 
finite
sequence
$\nice{U} = \{U_i: i \leq n\}$ (where $n\in\N^+$) 
satisfying 
the following conditions:
\begin{itemize}
\item[(1$_{\nice{U}}$)] 
$U_i\subseteq F(X)$ for every $i\le n$,
\item[(2$_\nice{U}$)] $U_{i}^{-1} = U_i$ for every 
$i\leq n$,
\item[(3$_\nice{U}$)] $ \bigcup_{x \in \dash{X}} x \fprod U_{i+1} \fprod U_{i+1} \fprod x^{-1} \subseteq U_i$ for every 
$i < n$,
\item[(4$_\nice{U}$)] 
$e\in U_n$
\end{itemize}
where $\bar{X} = X \cup X^{-1} \cup \{e\}$. 
\end{definition}

In practice, these {\fns}s shall be defined in finite sub-alphabets of an infinitely generated free group $F(X)$. These sequences are designed to approximate a future, ``full neighbourhood system'' for the entire infinitely generated free group $F(X)$. To ensure that our construction remains consistent, we introduce the notion of \emph{finite neighbourhood system extension}:  

\begin{definition}
\label{SPSb}
Given two 
sets $X$ and $Y$,
we shall say that a \fns\ $\nice{V} = \{V_i:i\leq m\}$ for $F(Y)$ is
an \emph{extension\/} of a \fns\ $\nice{U} = \{U_i: i \leq n\}$
for $F(X)$
if and only if the following conditions are satisfied:
\begin{itemize}
\item[(i$_{\nice{V}}^{\nice{U}}$)] $X \subseteq Y$, so $F(X)\subseteq F(Y)$,
\item[(ii$_{\nice{V}}^{\nice{U}}$)] $n \le m$,
\item[(iii$_{\nice{V}}^{\nice{U}}$)] $V_i\cap F(X) =U_i$ for every $i\leq n$.
\end{itemize}
\end{definition}

The crucial ingredient for extensions is given in item (iii$_{\nice{V}}^{\nice{U}}$). Essentially, this item implies that when we ``restrict'' the possible alphabet of the words in the set $V_i$ to what is considered an ``native'' (i.e, smaller) alphabet $X$, the only words which can possibly appear are the \textbf{original words} from $U_i$. When producing extensions of finite neighbourhood systems, this requirement is the hardest to obtain, as item (3$_\nice{U}$ of Definition \ref{SPSa} implies that the elements of $V_i$ have to be closed under conjugation of members of the larger, ``foreign'' alphabet $Y$). A large part of the technical details in this paper are designed to ensure that when we attempt to extend neighbourhood systems, restriction (iii$_{\nice{V}}^{\nice{U}}$) will still hold in spite of adding a huge number of ``foreign'' words.

With these notions in place, the actual construction of the topology is carried out using a forcing-like argument. First, we consider the following partial order:

\begin{definition}
\label{def:P}
Let $X$ be a countably infinite set.
\begin{itemize}
\item[(a)] 
We denote by 
$\P$ the set of all triples $p=\langle\!\langle X^p,n^p,\nice{U}^p \rangle\!\rangle$ satisfying the following conditions:
\begin{itemize}
\item[(1$_p$)] $X^p\in[X]^{<\omega}$,
\item[(2$_p$)] \by $n^p\in\N^+$, \ey
\item[(3$_p$)] $\nice{U}^p = \{U_i^p:i \leq n^p\}$ is a finite neighbourhood system for $F(X^p)$.
\end{itemize}

\item[(b)] Given triples
$
p=\langle\!\langle X^p,n^p,\nice{U}^p \rangle\!\rangle \in\P
$
and 
$q=\langle\!\langle X^q,n^q,\nice{U}^q,\rangle\!\rangle \in\P,
$ we define
$q\le p$ if and only if 
$\nice{U}^q$ is an extension of $\nice{U}^p$ in the sense of Definition \ref{SPSb}.
\end{itemize}
\end{definition}

\begin{definition}
Let $(\P,\le)$ be a poset.
Recall that a set $D\subseteq \P$ is called:
\begin{itemize}
\item[(i)] 
   {\em dense in $(\P,\le)$\/} provided that for every $p\in \P$ there exists $q\in D$ such that $q\le p$;
\item[(ii)]   
   {\em downward-closed in $(\P,\le)$\/} if $p \in D$, $q \in \P$ and $q \leq p$ imply $q \in D$.
\end{itemize}    
\end{definition}

\begin{remark}
If $\mathcal{D}_1$ and $\mathcal{D}_2$ are dense in $\P$ and $\mathcal{D}_1$ is downward-closed in $\P$, then $\mathcal{D}_1 \cap \mathcal{D}_2$ is dense in $\P$. 
\end{remark}

For a countably infinite set $X$, the pair $(\P, \le)$ is a non-empty and well-defined partial order (see \cite[Lemma 8.2 and Lemma 8.3]{SY_FreeSSGP}). 
\by In the following table we outline some subsets already proven to be dense in $\P$ by Shakhmatov and the author in  \cite[Lemma 8.6]{SY_FreeSSGP}:  \ey

\begin{lemma}[{\cite[Lemma 8.6]{SY_FreeSSGP}}] \label{dense:sets}
Given a countably infinite set $X$ and $(\P,\le)$ the partial order from Definition \ref{def:P}, the following holds:

\vspace{1em}
{\centering
\begin{tabular}{ | l  l l l l |}
Quantifier & Name & & Set & Property \\ \hline
$\forall n \in \N$ & $A_n$ & = & $\{q\in\P: n\le n^q\}$ & is dense and downward-closed in $\P$. \\
$\forall S \in [X]^{< \omega}$ & $B_S$ & = & $\{ q \in \P : S \subseteq X^q \}$ & is dense in $\P$. \\
$\forall g \in F(X)\setminus\{e\}$ & $C_g$ & = & $\{q \in \P: g\in F(X^q)\setminus U^q_{n^q}\}$ & is dense in $\P$. \\
$\forall g \in F(X)$ & $D_g$ & = & $\{q \in \P: g \in \grp{ \Cyc (U_{n^q}^q ) } \}$ & is dense in $\P$.
\end{tabular} \par
} \vspace{1em}
\end{lemma}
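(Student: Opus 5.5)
Each of the four claims will be handled by building an explicit extension of an arbitrary $p=\langle\!\langle X^p,n^p,\nice{U}^p\rangle\!\rangle\in\P$. The sets $C_g$ and $D_g$ will be reduced to $A_n$ and $B_S$ by composing such extensions; this is legitimate because extension of \fns s is transitive.

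For $A_n$, downward-closedness is immediate: if $q'\le q$ then $n^q\le n^{q'}$ by (ii$^{\nice{U}}_{\nice{V}}$). For density I will lengthen $\nice{U}^p$ by appending levels $U_{k}=\{e\}$ for $n^p<k\le \max(n,n^p)$, keeping the alphabet $X^p$. Conditions (1)–(4) of Definition \ref{SPSa} hold trivially for the new levels, since $x\fprod e\fprod e\fprod x^{-1}=e$ lies in every $U_i$. Condition (iii$^{\nice{U}}_{\nice{V}}$) holds because the old levels are untouched.

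For $B_S$ I will set $Y=X^p\cup S$ and close the system bottom-up. Put $V_{n^p}=U_{n^p}$, and for $i<n^p$ let
\[
V_i=U_i\cup\bigcup_{y\in\bar Y}y\fprod V_{i+1}\fprod V_{i+1}\fprod y^{-1}.
\]
These sets are symmetric, and (3) holds by construction. The real content is (iii$^{\nice{U}}_{\nice{V}}$): I must show $V_i\cap F(X^p)=U_i$. I plan to prove this by induction on $n^p-i$, using a normal-form analysis. A word $y\fprod w\fprod y^{-1}$ with $y\in\bar Y\setminus\bar X^p$ and $w\in F(Y)\setminus\{e\}$ keeps its outer letters $y$ and $y^{-1}$ after reduction, because $y$ is not a letter of any element of $V_{i+1}\cap F(X^p)$. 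So such a word never falls into $F(X^p)$, unless $w=e$, in which case the word is $e\in U_i$. When both factors from $V_{i+1}$ lie in $F(X^p)$ and $y\in\bar X^p$, the inductive hypothesis and (3$_{\nice{U}}$) return us to $U_i$. The delicate mixed case, where foreign letters cancel inside the product $V_{i+1}V_{i+1}$, will need the letter bookkeeping of Lemma \ref{letters:and:products} together with a description of the reduced forms of elements of $V_{i+1}$.

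For $C_g$, I will first pass to a member of $B_{\lett(g)}$ below $p$ and then append one trivial level $\{e\}$ as in the $A_n$ case. Since $g\neq e$, this gives $g\in F(X^q)\setminus U^q_{n^q}$.

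For $D_g$, after enlarging the alphabet to contain $\lett(g)$, I will pick a fresh letter $y\notin X^p$ and write $g=(g\fprod y)\fprod y^{-1}$. I will then add the cyclic subgroups $\grp{g\fprod y}$ and $\grp{y}$ to the bottom level and close upward under conjugation by $\bar Y$ as above; this is a $B$-enrichment with $B=\{gy,y\}$. Both added generators involve $y$ and generate subgroups lying in the new bottom level, so $g\in\grp{\Cyc(V_{n})}$. I expect this verification of (iii$^{\nice{U}}_{\nice{V}}$) to be the main obstacle. Products of powers of $g y$ and $y$, together with conjugates, can cancel every occurrence of $y$, and then a native word might appear in some $V_i$ outside $U_i$.

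My first attempt at this obstacle is to fix a canonical representation of each element of $V_i$ as a product of elements of $V_{i+1}$ conjugated by letters. I will then count the exponent sum of $y$ and track the positions at which $y$ survives reduction. The aim is to show that total cancellation of $y$ forces the factors to pair up and collapse to elements of $U_{i+1}U_{i+1}$ conjugated by native letters, so that the resulting word lies in $U_i$. If this fails, I will replace $y$ by a longer foreign word, such as $y z y^{-1}z^{-1}$ with two fresh letters, so that cancellation becomes more rigid.
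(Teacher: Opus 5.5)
Your arguments for $A_n$ and $C_g$ are fine. The plan for $D_g$, however, has a genuine gap: the enrichment you propose is provably not an extension. Since $e\in\bar{Y}$, every enrichment satisfies $V_{i+1}\fprod V_{i+1}\subseteq V_i$. So from $g\fprod y,\,y^{-1}\in V_n$ (with $n=n^p$) you get $g=(g\fprod y)\fprod y^{-1}\in V_{n-1}\cap F(X^p)$, and (iii$_{\nice{V}}^{\nice{U}}$) fails whenever $g\notin U_{n-1}$. For example, take $X^p=\{x\}$, $n^p=1$, $U_0=U_1=\{e\}$, $g=x$. Here $y$ cancels completely and the result is not in $U_{n-1}$, so the statement you aim to prove (total cancellation of $y$ forces a collapse into $U_i$) is false.

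The obstruction is quantitative. Suppose $q\le p$, $g\in F(X^p)$ and $g=c_1\fprod\cdots\fprod c_r$ with all $c_j\in U^q_{n^q}$. Padding with $e$ and iterating $U^q_{i+1}\fprod U^q_{i+1}\subseteq U^q_i$ gives $g\in U^q_0\cap F(X^p)=U^p_0$ whenever $r\le 2^{n^q}$. So if $g\notin U^p_0$, any witness for $D_g$ needs more than $2^{n^p}$ cyclic factors. Hence no gadget of fixed size, such as $y\fprod z\fprod y^{-1}\fprod z^{-1}$, can work once $n^p$ is large; adding $\grp{g\fprod t}$ and $\grp{t}$ literally even gives $g\in V_{n-1}$ at once.

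The missing idea is a number of fresh letters growing with $n$, controlled by the letter bound of Lemma \ref{extension:set:c}, as in Lemmas \ref{this:lemma:oct26} and \ref{this:corollary:oct312024:main}. Take $k>|X^p|\cdot 4^{n}$ fresh letters, $t=y_1\fprod\cdots\fprod y_k$ and $c=g\fprod t$. Let $\nice{V}'$ be the cyclic $\{y_1,\dots,y_k\}$-enrichment, which is an extension by Lemma \ref{cyclic:C-enrichment}. Let $\nice{V}$ be the cyclic $\{y_1,\dots,y_k,c\}$-enrichment, so that $g=c\fprod y_k^{-1}\fprod\cdots\fprod y_1^{-1}\in\grp{\Cyc(V_n)}$.

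Non-trivial powers of $c$ have more than $k$ letters, so Lemma \ref{extension:set:c} gives the hypothesis of Lemma \ref{reduction:lemma}. Now let $h=a_1\fprod\cdots\fprod a_m$ be a canonical representation of $h\in V_i\cap F(X^p)$. Applying Lemma \ref{extension:set:c} to the canonical representation $\eta(a_1)\fprod\cdots\fprod\eta(a_m)$ of an element of $V'_i$ (supplied by Lemma \ref{reduction:lemma}) yields a letter $y_{j_0}$ occurring only in the pieces $a_l\in\grp{c}\setminus\{e\}$. Write $c=\alpha\fprod y_{j_0}\fprod\beta$. The endomorphism of $F(Y)$ fixing the other letters and sending $y_{j_0}\mapsto\alpha^{-1}\fprod\beta^{-1}$ kills $c$ and fixes $h$. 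Therefore $h=\eta(a_1)\fprod\cdots\fprod\eta(a_m)\in V'_i\cap F(X^p)=U_i$.

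For $B_S$, your normal-form step is false as stated: $y\fprod w\fprod y^{-1}$ loses its outer letters when $w=y^{-1}\fprod u\fprod y$ with $u\in V_{i+2}$. It is also unnecessary. Your $\nice{V}$ is the cyclic $\{e\}$-enrichment, which is covered by Lemma \ref{cyclic:C-enrichment}. More directly, the retraction $F(Y)\to F(X^p)$ killing $Y\setminus X^p$ fixes $F(X^p)$ and maps $\bar{Y}$ into $\overline{X^p}$. So $(3_{\nice{U}})$ and reverse induction show it maps $V_i$ into $U_i$. That retraction is useless for $D_g$ precisely because it sends $c$ to $g\notin U_n$, which is why counting is needed there.
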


\subsection{Construction of a neighbourhood system}

In what follows, we always assume $X$ is a countably infinite set (the alphabet), we let $F(X)$ be the free group on $X$ and we consider the partial order $(\P,\le)$ from Definition \ref{def:P}. 

\begin{definition}
Let $\mathscr{D}_1$ and $\mathscr{D}_2$ be two non-empty families of dense subsets of $\P$. We say that $\mathscr{D}_{2}$ is \emph{a refinement of} $\mathscr{D}_1$ if the following condition holds: for each member $D_1 \in \mathscr{D}_1$, there exists a member $D_2 \in \mathscr{D}_2$ such that $D_2 \subseteq D_1$. 
\end{definition}

Given a countably infinite family $\mathscr{D}$ of dense subsets of $\P$, we may select a countably infinite linear order $\F \subseteq \P$ which intersects all members of $\mathscr{D}$ via a standard induction (in the literature, this is known as the \emph{Rasiowa-Sikorski} lemma). We note the following:

\begin{remark}
Let $\mathscr{D}_1$ and $\mathscr{D}_2$ be two countably infinite families of dense subsets of $\P$. Assume that $\mathscr{D}_{2}$ is \emph{a refinement of} $\mathscr{D}_1$. If  $\F \subseteq \P$ is a linear order which intersects all members of $\mathscr{D}_2$, then $\F$ also intersects all members of $\mathscr{D}_1$. If we say $\mathscr{D}_2$ is a refinement of $\mathscr{D}_1$, we always assume that $\mathscr{D}_2$ is a family of dense subsets of $\P$. 
\end{remark}

Let us fix the following family of dense sets of $(\P,\le)$:
\begin{equation}
\mathscr{D}_{T_2}=
\{A_n  :n \in \N\}
\cup
\{B_S: S \in [X]^{< \omega} \}
\cup
\{C_g: g\in F(X) \setminus\{e\}\}.
\end{equation}

For each countably infinite refinement $\mathscr{D}$ of $\mathscr{D}_{T_2}$, we define a sequence $\B_{\mathscr{D}} = \{U_n:n\in\N\}$ where \begin{equation} \label{this:eq:19oct2024}
U_n=\bigcup\{U_n^p:p\in\F\text{ and }n\le n^p\}.
\end{equation}

\begin{proposition} \label{this:dec18:welldefined:prop}
Let $\mathscr{D}$ be a countably infinite refinement of $\mathscr{D}_{T_2}$. Let $\F$ be a linear order of $\P$ which intersects every member of $\mathscr{D}$. Then,  for each $n \in \N$ the set $U_n$ from \eqref{this:eq:19oct2024} is well-defined, and the inclusion $e \in U_n$ holds.
\end{proposition}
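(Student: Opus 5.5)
The statement has two parts: $U_n$ is a well-defined set, and $e\in U_n$. For well-definedness, I will first observe that the defining family $\{U_n^p : p\in\F,\ n\le n^p\}$ is non-empty. Since $\mathscr{D}$ refines $\mathscr{D}_{T_2}$, for each $n\in\N$ there is $D\in\mathscr{D}$ with $D\subseteq A_n$. As $\F$ meets every member of $\mathscr{D}$, some $p\in\F$ lies in $A_n$, that is, $n\le n^p$. Hence the index set is non-empty and each $U^p_n$ with $n\le n^p$ is a genuine member of the \fns\ $\nice{U}^p$. So $U_n$ is a union of a non-empty family of subsets of $F(X)$, using $F(X^p)\subseteq F(X)$. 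Well-definedness needs nothing more, since the union does not depend on any choices.

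For $e\in U_n$, I will take such a $p\in\F$ with $n\le n^p$ and show $e\in U^p_n$ by downward induction on $i$ from $n^p$ to $n$. The base case is (4$_{\nice{U}^p}$), which gives $e\in U^p_{n^p}$. For the inductive step, suppose $e\in U^p_{i+1}$ with $i<n^p$. Apply (3$_{\nice{U}^p}$) with $x=e\in\dash{X^p}$: then $e=e\fprod e\fprod e\fprod e^{-1}\in e\fprod U^p_{i+1}\fprod U^p_{i+1}\fprod e^{-1}\subseteq U^p_i$. Therefore $e\in U^p_n\subseteq U_n$.

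There is no real obstacle here. The only point needing care is that non-emptiness of the index family comes from the refinement property together with $\F$ meeting every member of $\mathscr{D}$, rather than directly from $A_n$. Incidentally, the downward-closedness of $A_n$ and the linearity of $\F$ show that the family is cofinal in $\F$, though this is not needed for the present claim.
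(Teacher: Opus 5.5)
Your proposal is correct and is essentially the paper's argument: choose $D\in\mathscr{D}$ with $D\subseteq A_n$, take $p\in\F\cap D$, and use (3$_{\nice{U}}$) and (4$_{\nice{U}}$) to get $e\in U^p_{n^p}$ and hence $e\in U^p_n\subseteq U_n$. Your explicit downward induction with $x=e$ spells out the step that the paper compresses into the chain $e\in U^p_{n^p}\subseteq U^p_n\subseteq U_n$.
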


\begin{proof}
Let $n \in \N$ be arbitrary. Since $\mathscr{D}$ is a refinement of $\mathscr{D}_{T_2}$ there exists some $D \in \mathscr{D}$ such that $D \subseteq A_n$. By construction of $\F$ we may select some $p \in \F \cap D$. Since $p \in A_n$, we have $n \leq n^p$, therefore $e \in  U^p_{n^p} \subseteq U^p_n \subseteq U_n$ holds by items (3$_\nice{U}$) and (4$_\nice{U}$) of Definition \ref{SPSa}, and \eqref{this:eq:19oct2024}.
\end{proof}

While the following lemma is essentially proven in \cite{SY_FreeSSGP} for a specific family $\mathscr{D}$ which refines $\mathscr{D}_{T_2}$, the framing in  \cite{SY_FreeSSGP} does not make use of the formal language of refinements that we employ here. We provide an elementary proof for the sake of completion.

\by
\begin{lemma} \label{this:hausdorff:lemma}
Let $\mathscr{D}$ be a countably infinite refinement of $\mathscr{D}_{T_2}$. Let $\F$ be a linear order of $\P$ which intersects every member of $\mathscr{D}$. The sequence $\B_{\mathscr{D}} = \{U_n:n\in\N\}$ as in \eqref{this:eq:19oct2024} forms a neighbourhood basis of the identity for some metrizable (Hausdorff) group topology $\mathcal{T}_\mathscr{D}$ for $F(X)$.
\end{lemma}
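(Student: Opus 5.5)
We will verify the standard criterion for a sequence of subsets of a group to be a neighbourhood base at the identity of a group topology. Concretely, it suffices to check that, for every $n\in\N$:
\begin{itemize}
\item[(a)] $e\in U_n$ and $U_n=U_n^{-1}$;
\item[(b)] $U_{n+1}\fprod U_{n+1}\subseteq U_n$;
\item[(c)] for every $g\in F(X)$ there is $m$ with $g\fprod U_m\fprod g^{-1}\subseteq U_n$;
\item[(d)] $\bigcap_{n\in\N} U_n=\{e\}$.
\end{itemize}
Together with $U_{n+1}\subseteq U_n$ (which follows from (a) and (b)), conditions (a)--(c) give a group topology in which $\{U_n:n\in\N\}$ is a base at $e$. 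Condition (d) gives the Hausdorff property. Since the base is countable, the Birkhoff--Kakutani theorem then yields metrizability.

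Item (a) is Proposition \ref{this:dec18:welldefined:prop} together with (2$_\nice{U}$). For (b) and (c), the key observation is that $\F$ is a linear order, so any two members are comparable, and the smaller one extends the larger. Take $u,v\in U_{n+1}$, witnessed by $p,q\in\F$ with $n+1\le n^p,n^q$, and assume $q\le p$. By (iii$^{\nice{U}^p}_{\nice{U}^q}$) we have $U^p_{n+1}=U^q_{n+1}\cap F(X^p)\subseteq U^q_{n+1}$, so $u,v\in U^q_{n+1}$. Then (3$_\nice{U}$) with $x=e$ gives $uv\in U^q_n\subseteq U_n$. The same argument with $x\in X^q\cup (X^q)^{-1}$ shows $x\fprod U_{n+1}\fprod x^{-1}\subseteq U_n$ whenever $x\in X$. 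Here we first use density of the appropriate refinement of $B_{\{x\}}$ to find $r\in\F$ with $x\in X^r$, and then pass to the smaller of $r$ and the relevant witnesses, which is legitimate because $X^r$ only grows along extensions. Iterating over the letters of a reduced word $g$ of length $k$ gives $g\fprod U_{n+k}\fprod g^{-1}\subseteq U_n$, which is (c).

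The main obstacle is (d), because membership of $g$ in some $U^p_n$ for a particular $p$ must be ruled out for every $p\in\F$, not only for the witness of $C_g$. Given $g\neq e$, choose $D\in\mathscr{D}$ with $D\subseteq C_g$ and $q\in\F\cap D$. Then $g\in F(X^q)\setminus U^q_{n^q}$. We claim that $g\notin U_{n^q}$. Suppose $g\in U^p_{n^q}$ for some $p\in\F$ with $n^q\le n^p$. If $p\le q$, then (iii) gives $U^p_{n^q}\cap F(X^q)=U^q_{n^q}$, a contradiction since $g\in F(X^q)$. If instead $q\le p$, then $g\in F(X^p)$ (as $g\in U^p_{n^q}\subseteq F(X^p)$), and so $g\in U^q_{n^q}\cap F(X^p)=U^p_{n^q}$ gives nothing new. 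However, in this case $n^p\le n^q$ by (ii), so $n^p=n^q$. Hence $U^p_{n^q}=U^q_{n^q}\cap F(X^p)$, and this again contradicts $g\notin U^q_{n^q}$. Thus $g\notin U_{n^q}$ in both cases, which proves (d) and completes the argument.
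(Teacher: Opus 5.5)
Your proposal is correct and follows essentially the same route as the paper. Both arguments check products, symmetry, and conjugation by single letters iterated along a reduced word, using comparability in the linear order and the sets $B_S$; both then obtain $\bigcap_n U_n=\{e\}$ from a $C_g$-witness and a two-case comparison via condition (iii), and finish with Birkhoff--Kakutani. Your write-up is slightly more careful than the paper's in (b) and (c), where you explicitly use (iii) to move both factors into the smaller condition and pass to a common extension containing the letter. In (d), the detour through $n^p=n^q$ is harmless but unnecessary, since (iii) already applies at index $n^q\le n^p$.
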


\begin{proof}
We employ the three-condition characterization given in \cite[Theorem 2.1.10 and Remark 2.1.14]{new1}. We follow the same proof strategy as in \cite[Section 9]{SY_FreeSSGP}. Let us then focus on each condition one-by-one: \begin{claim} \label{this:claim1:dec92025}
For each $n \in \N$, we have $U_{n+1} \cdot U_{n+1} \subseteq U_n$. 
\end{claim}
\begin{proof}
Let $n \in \N$ be arbitrary. Given an arbitrary pair $g_1,g_2 \in U_{n+1}$, \eqref{this:eq:19oct2024} implies there exists some $p_1,p_2 \in \F$ such that $g_i \in U^{p_i}_{n+1}$ where $n + 1 \leq n^{p_i}$ holds for $i=1,2$. Since $\F$ is linearly ordered, we may (without loss of generality) assume that $p_2 \leq p_1$. We now employ condition (3$_\nice{U}$) from Definition \ref{SPSa} to deduce that $$e \cdot g_1 \cdot g_2 \cdot e \in U^{p_2}_{n+1} \cdot U^{p_2}_{n+1} \subseteq U^{p_2}_{n}.$$
This proves that $g_1 \cdot g_2 \in U^{p_2}_{n} \subseteq U_n$. This proves the inclusion $U_{n+1} \cdot U_{n+1} \subseteq U_{n}$, and so we set $m = n+1$ per our claim.
\end{proof}
\begin{claim} \label{this:claim2:dec92025}
For each $n \in \N$ the set $U_n$ is symmetric. 
\end{claim}
\begin{proof}
Let $n \in \N$ be arbitrary. For each $p \in \F$ satisfying $n \leq n^p$, the set $U^p_n$ is symmetric due to condition (2$_\nice{U}$) from Definition \ref{SPSa}. Since $U_n$ is then a union of symmetric subsets of $F(X)$ by \eqref{this:eq:19oct2024}, we deduce that the set $U_n$ itself is symmetric. 
\end{proof}
\begin{claim} \label{this:claim3:dec92025}
For each $n \in \N$ and $g \in F(X)$ there exists $m \in \N$ such that $g \cdot U_m \cdot  g^{-1} \subseteq U_n$. 
\end{claim}
\begin{proof}
Fix $n \in \N$ and $g \in F(X)$. Fix a representation $g = a^{\varepsilon_1}_1 \cdot a^{\varepsilon_2}_2 \cdot \cdots \cdot a^{\varepsilon_l}_l$ of $g$ as an irreducible product of letters in the alphabet $X$ (where each $\varepsilon_j \in \{-1,1\}$ for $j = 1,\dots,l$). Let $Y = \{a_1,\dots,a_l\} \subseteq X$ be the set of letters used to represent $g$. We shall prove that the suitable integer $m$ is actually $m = n + l$ where $l$ is the length of $g$. To do so, we employ a short induction on $l$. Let $g \in U_{n+1}$ be arbitrary, and let $p \in \F$ satisfy $g \in U^p_{n+1}$ and $n + 1 \leq n^p$. Since $\mathscr{D}$ refines $\mathscr{D}_{T_2}$, the linear order $\F$ intersects the set $B_Y$ (from Lemma \ref{dense:sets}). Thus, we may find some $q \in \F$ such that $Y \subseteq X^q$. Using the linear order of $\F$, either $p$ extends $q$ or $q$ extends $p$. Select the one which extends the other, and call it $r$. In this situation, we have $Y \subseteq X^r$ and $n +1 \leq r$ both hold. We now employ condition (3$_\nice{U}$) from Definition \ref{SPSa} to deduce that \begin{equation*}
a^{\varepsilon_1}_1 \cdot g \cdot a^{-\varepsilon_1}_1 \in a^{\varepsilon_1}_1 \cdot U^p_{n+1} \cdot U^p_{n+1} \cdot a^{-\varepsilon_1}_1 \subseteq U^p_n \subseteq U_n.
\end{equation*}
Since $g \in U_{n+1}$ was arbitrary, we have now proven that \begin{equation*}
a^{\varepsilon_1}_1 \cdot U_{n+1} \cdot a^{-\varepsilon_1}_1 \subseteq U_n
\end{equation*}
holds. Following the same procedure, we prove that $a^{\varepsilon_2}_{2} \cdot U_{n+2} \cdot a^{- \varepsilon_2}_{2} \subseteq U_{n+1}$ thereby deducing that \begin{equation*}
(a^{\varepsilon_1}_1 \cdot a^{\varepsilon_2}_{2}) \cdot U_{n+2} \cdot (a^{- \varepsilon_2}_{2} \cdot a^{- \varepsilon_1}_1) \subseteq a_1 \cdot U_{n+1} \cdot a^{-\varepsilon_1}_1 \subseteq U_n.
\end{equation*}
Proceeding as above by induction on $l$, we prove \begin{equation*}
(a^{\varepsilon_1}_1 \cdot a^{\varepsilon_2}_2 \cdots a^{\varepsilon_l}_l) \cdot U_{n+l} \cdot (a^{- \varepsilon_l}_l \cdots \cdot a^{- \varepsilon_2}_2 \cdot a^{- \varepsilon_1}_1) = g \cdot U_{n+l} \cdot g^{-1} \subseteq U_n.
\end{equation*}
We now set $m = n+l$ to complete this claim.
\end{proof}
By Claims \ref{this:claim1:dec92025}, \ref{this:claim2:dec92025}, \ref{this:claim3:dec92025} and \cite[Theorem 2.1.10 and Remark 2.1.14]{new1} we conclude that $\B_{\mathscr{D}}$ is a basis of neighbourhoods of the identity for a group topology $\mathcal{T}_\mathscr{D}$ on $F(X)$. To prove that the topology is Hausdorff, it suffices to see the following:
\begin{claim} \label{this:claim4:dec92025}
The equality $\bigcap_{n \in \N} U_n = \{e\}$ holds. 
\end{claim}
\begin{proof}
Let $g \in F(X) \setminus \{e\}$ be arbitrary. Appealing to Lemma \ref{dense:sets}, the linear order $\F$ intersects the dense set $C_g$ non-trivially since $\mathscr{D}$ refines $\mathscr{D}_{T_2}$. Therefore, we may find some $p \in \F$ such that $g \in F(X^p) \setminus U^p_{n^p}$. Towards a contradiction, let us assume that $g \in U_{n^p}$. By \eqref{this:eq:19oct2024}, there exists some $q \in \F$ such that $n^p \leq n^q$ and $g \in U^q_{n^p}$ holds. Since $\F$ is a linear order, we may directly compare $p$ and $q$. If $p$ extends $q$, then condition (iii$_{\nice{V}}^{\nice{U}}$) from Definition \ref{SPSb} implies that $U^p_{n^p} \cap F(X^q) = U^q_{n^p}$. This implies that $g \in U^p_{n^p}$, which is a contradiction to the selection of $p$. If we now assume that $q$ extends $p$, then $U^q_{n^p} \cap F(X^p) = U^p_{n^p}$ holds. However, the selection of $p$ implied that $g \in F(X^p)$ and the selection of $q$ implied that $g \in U^q_{n^p}$. Thereby, $g \in U^p_{n^p}$ must also hold, which still contradicts the selection of $p$. Since both cases lead to a contradiction, we conclude that $g \not \in  U_{n^p}$ must hold. This proves that $\bigcap_{n \in \N} U_n \subseteq \{e\}$. The converse inclusion holds by Proposition \ref{this:dec18:welldefined:prop}.
\end{proof}
By Claim \ref{this:claim4:dec92025}, we deduce that the (unique) group topology $\mathcal{T}_\mathscr{D}$ having $\B_{\mathscr{D}}$ as a basis of neighbourhoods of the identity is Hausdorff. The classical Birkhoff--Kakutani theorem now implies that the group topology is metrizable.
\end{proof}

Let us quickly summarize the role that each of the dense subsets considered in $\mathscr{D}_{T_2}$ plays:

\begin{remark}
In Lemma \ref{this:hausdorff:lemma} each of the dense subsets is used for the following purpose:

\vspace{1em}
{\centering
\begin{tabular}{ | l  l l l l |}
Quantifier & Name & & Set & Application \\ \hline
$\forall n \in \N$ & $A_n$ & = & $\{q\in\P: n\le n^q\}$ & ensuring $\B_{\mathscr{D}}$ is countably infinite. \\
$\forall S \in [X]^{< \omega}$ & $B_S$ & = & $\{ q \in \P : S \subseteq X^q \}$ & ensuring conjugation law for $\B_{\mathscr{D}}$. \\
$\forall g \in F(X)\setminus\{e\}$ & $C_g$ & = & $\{q \in \P: g\in F(X^q)\setminus U^q_{n^q}\}$ & ensuring $\mathcal{T}_\mathscr{D}$ is Hausdorff. 
\end{tabular} \par
} \vspace{1em}
\end{remark}

\ey

\subsection{Beyond Hausdorffness: inducing the $\assgp$ and topological simplicity}

Let us now introduce a way to equip the associated group topologies $\mathcal{T}_\mathscr{D}$ we constructed previously with new properties beyond Hausdorffness. 

 We first recall the method we employed to obtain the $\assgp$ in \cite{SY_FreeSSGP}.
Consider the following family:
\begin{equation}
\label{family:D1}
\mathscr{D}_{1}=
\{C_g: g\in F(X) \setminus\{e\}\}
\cup
\{A_n \cap D_g :n \in \N, g \in F(X)\}
\cup
\{B_S: S \in [X]^{< \omega} \}.
\end{equation}

The family $\mathscr{D}_{1}$ is then countably infinite, and comprised of dense subsets of $\P$ (as proven in \cite[Section 9]{SY_FreeSSGP}). In our current terminology, we note that $\mathscr{D}_{1}$ is also a refinement of $\mathscr{D}_{T_2}$. Therefore, given any countably infinite linear order $\F$ of $\P$ which intersects all members of $\mathscr{D}_{1}$, the group topology $\mathcal{T}_{\mathscr{D}_1}$ produced via $\F$ from Lemma \ref{this:hausdorff:lemma} is Hausdorff and metrizable.

\begin{theorem}[\cite{SY_FreeSSGP}]
Let $\F$ be a linear order of $\P$ which intersects every member of $\mathscr{D}_{1}$. The group topology $\mathcal{T}_{\mathscr{D}_1}$ with the neighbourhood basis of the identity $\B_{\mathscr{D}_1} = \{U_n:n\in\N\}$ satisfies the $\assgp$.
\end{theorem}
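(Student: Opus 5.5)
The plan is to show that every $g \in F(X)$ lies in $\grp{\Cyc(U_n)}$ for every $n \in \N$. Since $\{U_n : n \in \N\}$ is a neighbourhood basis of the identity for $\mathcal{T}_{\mathscr{D}_1}$ (Lemma \ref{this:hausdorff:lemma}, which applies because $\mathscr{D}_1$ refines $\mathscr{D}_{T_2}$), every open neighbourhood $U$ of the identity contains some $U_n$. Then $\Cyc(U_n) \subseteq \Cyc(U)$, which gives $F(X) = \grp{\Cyc(U_n)} \subseteq \grp{\Cyc(U)}$, and this is exactly the $\assgp$ of Definition \ref{assgp:def}.

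To carry this out, fix $n \in \N$ and $g \in F(X)$. Since $\F$ meets $A_n \cap D_g \in \mathscr{D}_1$, choose $p \in \F$ with $n \le n^p$ and $g \in \grp{\Cyc(U^p_{n^p})}$. Write $g = c_1^{k_1} \cdots c_r^{k_r}$ with each $c_j \in \Cyc(U^p_{n^p})$, so that $\grp{c_j} \subseteq U^p_{n^p}$. It remains to see that $U^p_{n^p} \subseteq U_n$. Condition (3$_\nice{U}$) with $x = e$, together with $e \in U^p_{i+1}$, gives $U^p_{i+1} \subseteq U^p_{i+1} \cdot U^p_{i+1} \subseteq U^p_i$. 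Hence the sequence $U^p_i$ is decreasing in $i$, and $U^p_{n^p} \subseteq U^p_n$. Finally, $U^p_n \subseteq U_n$ by \eqref{this:eq:19oct2024}, because $p \in \F$ and $n \le n^p$. Consequently $\grp{c_j} \subseteq U_n$, so $c_j \in \Cyc(U_n)$ for each $j$, and therefore $g \in \grp{\Cyc(U_n)}$.

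The argument has no real obstacle once the density of $A_n \cap D_g$ is available. That density is the substantive input, supplied by Lemma \ref{dense:sets}: $D_g$ is dense, $A_n$ is dense and downward-closed, and the remark on intersections of dense sets then makes $A_n \cap D_g$ dense. The only point needing care is to use the same condition $p$ for both requirements, namely $n \le n^p$ and $g \in \grp{\Cyc(U^p_{n^p})}$. This is precisely why the family $\mathscr{D}_1$ contains the intersections $A_n \cap D_g$ rather than $A_n$ and $D_g$ separately.
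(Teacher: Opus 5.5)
Your proposal is correct and is essentially the argument the paper gives in Lemma \ref{this:lemma:jan92026}, which proves this statement for any refinement of $\mathscr{D}_1$: pick $p \in \F \cap (A_n \cap D_g)$ and push $\grp{\Cyc(U^p_{n^p})}$ into $\grp{\Cyc(U_n)}$. The paper reaches $U_n$ via $U^p_{n^p} \subseteq U_{n^p} \subseteq U_n$ using nestedness of $\B_{\mathscr{D}}$, while you use $U^p_{n^p} \subseteq U^p_n \subseteq U_n$ inside $\nice{U}^p$; there $e \in U^p_i$ for every $i$ follows by reverse induction from (4$_\nice{U}$) and (3$_\nice{U}$), and density of $A_n \cap D_g$ is not actually needed, since the hypothesis on $\F$ already hands you $p$.
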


The following lemma proves that the $\assgp$ is inherited through refinements of the family $\mathscr{D}_1$:

\begin{lemma} \label{this:lemma:jan92026}
Let $\mathscr{D}$ be a countably infinite refinement of $\mathscr{D}_1$, and let $\F$ be a linear order of $\P$ which intersects all members of $\mathscr{D}$. Then, the group topology $\mathcal{T}_{\mathscr{D}}$ with the neighbourhood basis of the identity $\B_{\mathscr{D}} = \{U_n:n\in\N\}$ produced through $\F$ satisfies the $\assgp$.
\end{lemma}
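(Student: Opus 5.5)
We need to show that for every $n \in \N$ and every $g \in F(X)$ we have $g \in \grp{\Cyc(U_n)}$. Since $\B_{\mathscr{D}}$ is a neighbourhood basis of the identity, every open neighbourhood $W$ of $e$ contains some $U_n$, and then $\grp{\Cyc(U_n)} \subseteq \grp{\Cyc(W)}$. So it suffices to prove $F(X) = \grp{\Cyc(U_n)}$ for each $n$. First note that $\mathscr{D}$ is also a refinement of $\mathscr{D}_{T_2}$. Indeed, $C_g$ and $B_S$ belong to $\mathscr{D}_1$ directly, and $A_n \cap D_g \subseteq A_n$ with $D_e=\P$. Hence Lemma \ref{this:hausdorff:lemma} applies and $\mathcal{T}_{\mathscr{D}}$ is a well-defined Hausdorff group topology with basis $\B_{\mathscr{D}}$.

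Fix $n \in \N$ and $g \in F(X)$. Because $\mathscr{D}$ refines $\mathscr{D}_1$, there is some $D \in \mathscr{D}$ with $D \subseteq A_n \cap D_g$. The linear order $\F$ meets $D$, so we get $p \in \F$ with $n \le n^p$ and $g \in \grp{\Cyc(U^p_{n^p})}$.

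The key step is the inclusion $\Cyc(U^p_{n^p}) \subseteq \Cyc(U_n)$. Iterating condition (3$_\nice{U}$) with $x=e$, and using $e \in U^p_{i+1}$, gives $U^p_{i+1} \subseteq U^p_{i+1}\cdot U^p_{i+1} \subseteq U^p_i$. Hence $U^p_{n^p} \subseteq U^p_n$. Since $n \le n^p$ and $p \in \F$, definition \eqref{this:eq:19oct2024} gives $U^p_n \subseteq U_n$. Therefore any $x$ with $\grp{x} \subseteq U^p_{n^p}$ also satisfies $\grp{x} \subseteq U_n$, which yields $\Cyc(U^p_{n^p}) \subseteq \Cyc(U_n)$. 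Taking generated subgroups, $g \in \grp{\Cyc(U^p_{n^p})} \subseteq \grp{\Cyc(U_n)}$.

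As $g$ was arbitrary, $F(X) = \grp{\Cyc(U_n)}$ for every $n$, and by the first paragraph this gives the $\assgp$. None of these steps is a genuine obstacle. The only point needing care is the monotonicity $U^p_{n^p} \subseteq U^p_n$ inside a single \fns, together with the observation that $\mathscr{D}$ refines $\mathscr{D}_{T_2}$ (not only $\mathscr{D}_1$), so that the topology exists at all.
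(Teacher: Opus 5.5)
Your proposal is correct and follows essentially the same argument as the paper: pick $p\in\F$ lying in some member of $\mathscr{D}$ contained in $A_n\cap D_g$, then transfer the cyclic elements of $U^p_{n^p}$ into $\Cyc(U_n)$. The only differences are cosmetic. You obtain the inclusion as $U^p_{n^p}\subseteq U^p_n\subseteq U_n$, using monotonicity inside a single finite neighbourhood system and the definition of $U_n$, whereas the paper uses $U^p_{n^p}\subseteq U_{n^p}\subseteq U_n$ via the nestedness of $\B_{\mathscr{D}}$ (Claim \ref{this:claim1:dec92025}). You also check explicitly that $\mathscr{D}$ refines $\mathscr{D}_{T_2}$, which the paper leaves implicit.
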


\begin{proof}
It suffices to prove that the equality $\grp{\Cyc(U_n)} = F(X)$ holds for each $n \in \N$. Let us then fix arbitrary $n \in \N$ and $g \in F(X)$. Since $\F$ intersects all members of $\mathscr{D}$, and $\mathscr{D}$ is a refinement of $\mathscr{D}_1$, then $\F$ intersects all members of $\mathscr{D}_1$. We now select some $p \in \F \cap (A_n \cap D_g)$. Two conditions hold for $p$: the inequality $n \leq n^p$ and the inclusion $g \in \grp{\Cyc(U^p_{n^p})}$. In other words, there exist elements $g_1,\dots,g_k \in U^p_{n^p}$ such that $\grp{g_i} \subseteq U^p_{n^p}$ holds for all $i = 1,\dots,k$ and $g = \prod_{i = 1}^k g_i$. Since $U^p_{n^p} \subseteq U_{n^p}$, we have that $g_1,\dots,g_k \in \Cyc(U_{n^p})$. We now use the fact that the sequence $\B_{\mathscr{D}}$ is nested (see Claim \ref{this:claim1:dec92025} from Lemma \ref{this:hausdorff:lemma}); since $n \leq n^p$ holds, the inclusion $U_{n^p} \subseteq U_n$ holds as well. This proves that $g_i \in \Cyc(U_n)$ holds for all $i = 1,\dots,k$, and so $g \in \grp{\Cyc(U_n)}$. This proves that $F(X) \subseteq \grp{\Cyc(U_n)}$, with the converse inclusion being obvious. By Definition \ref{assgp:def}, we conclude that $(F(X), \mathcal{T}_{\mathscr{D}})$ satisfies the $\assgp$. 
\end{proof}

\by \begin{definition}
Let $X$ be a non-empty set. For each word $g \in F(X)$ we define the \emph{conjugation set} of $g$ as \begin{equation}
\mathrm{Conj}(g) = \{w \fprod g \fprod w^{-1} : w \in F(X) \}.
\end{equation}
\end{definition}
\ey

For every $n \in \N$, $S \in [X]^{< \omega}$, $g \in F(X) \setminus \{e\}$ and $h \in F(X)$ we now introduce the following set:

\begin{equation} \label{this:simple:poset:dense:set}
E_{n,S,g,h} = \{q \in \P: n \leq n^q, S \subseteq X^q \text{ and } (\mathrm{Conj}(g) \cdot h) \cap U_{n^q}^q \neq \emptyset \}.
\end{equation}

Let us now consider the following family of dense subsets of $\P$:
\begin{equation} \label{family:D2}
\mathscr{D}_{2}= 
\mathscr{D}_{T_2} \cup 
\{E_{n,S,g,h}: n \in \N, S \in [X]^{< \omega}, g\in F(X) \setminus \{e\}, h \in F(X)\}.
\end{equation}

Note that $\mathscr{D}_{2}$ is countably infinite, and a refinement of $\mathscr{D}_{T_2}$. 

\by
The main technical work done in this paper is for the proof of the following lemma:

\begin{lemma} \label{topological:simplifity:set}
For every $n \in \N$, $S \in [X]^{< \omega}$, $g \in F(X) \setminus \{e\}$ and $h \in F(X)$ the set \begin{equation} %\label{this:simple:poset:dense:set}
E_{n,S,g,h} = \{q \in \P: n \leq n^q, S \subseteq X^q \text{ and } (\mathrm{Conj}(g) \cdot h) \cap U_{n^q}^q \neq \emptyset \}.
\end{equation}
is dense in $(\P,\le)$.
\end{lemma}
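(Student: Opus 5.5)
Given an arbitrary $p = \langle\!\langle X^p,n^p,\nice{U}^p\rangle\!\rangle \in \P$, I will build $q \le p$ in $E_{n,S,g,h}$ in three stages.

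First I will enlarge and lengthen $p$. Using density of $A_n$ (downward-closed) and $B_{S'}$ from Lemma \ref{dense:sets}, with $S' = S \cup \lett(g) \cup \lett(h)$, I obtain $p' \le p$ with $n \le n^{p'}$ and $S' \subseteq X^{p'}$. Since $A_n$ is downward-closed, any further extension keeps $n \le n^q$. The condition $S \subseteq X^q$ is also preserved, because extensions only enlarge alphabets. So it remains to extend $p'$ to some $q$ with $f\cdot g\cdot f^{-1}\cdot h \in U^q_{n^q}$ for some $f \in F(X^q)$, keeping $n^q = n^{p'}$.

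Second, I will pick the conjugator. Since $X$ is infinite, I choose fresh letters $y, z \in X \setminus X^{p'}$. I will try $f = y$, or a longer word such as $f = y z^k$ (with $k$ large), so that the word $b = f g f^{-1} h$ is cyclically reduced in $F(X^{p'} \cup \{y,z\})$ and contains fresh letters in a rigid pattern. Then no power $b^m$ ($m \ne 0$) and no product of conjugates of such powers lies in $F(X^{p'})$ unless the fresh letters cancel completely. The point of choosing $f$ this way is that, because $g \ne e$, the reduced form of $y g y^{-1} h$ keeps both $y$ and $y^{-1}$: the $y$ cannot cancel against $g$ or $h$, which lie in $F(X^{p'})$. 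Moreover, $b$ is not conjugate into $F(X^{p'})$.

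Third, I will form $\nice{V}$ as the $B$-enrichment of $\nice{U}^{p'}$ for $B = \{b\}$ over $Y = X^{p'} \cup \{y,z\}$, in the sense of Definition \ref{canonical:representation:definition}. That is, I add $\grp{b}$ to the last level and close downward under the rule (3$_\nice{U}$) with conjugations by $\bar{Y}$, so that $\nice{V}$ is a \fns\ for $F(Y)$ with $b \in V_{n^{p'}}$. Conditions (1)--(4) and symmetry are then automatic from the construction.

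The main obstacle is condition (iii$^{\nice{U}}_{\nice{V}}$), namely $V_i \cap F(X^{p'}) = U^{p'}_i$. For this I will invoke the canonical representations of Definition \ref{canonical:representation:definition} together with the calculation lemmas of Section \ref{calculation:section}. Every new element of $V_i$ is a product of conjugates (by letters of $\bar Y$) of old elements and of powers of $b$. I must show that if such a product lands in $F(X^{p'})$, then all $b$-factors cancel pairwise, and the product then reduces to an element already in $U^{p'}_i$ by (3$_\nice{U}$) for $\nice{U}^{p'}$. I expect this to be the heart of the argument, and I will try first to prove it by tracking occurrences of $y^{\pm1}$ (and $z$). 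Conjugation by letters of $X^{p'}$ cannot cancel $y$. Conjugation by $y^{\pm1}$ can cancel at most one boundary $y$ of $b$, which is where the padding $z^k$ (with $k$ exceeding the depth $n^{p'}$ of the nesting) guarantees that an occurrence of $y$ survives unless two $b$-factors with opposite exponents meet. Once (iii) is verified, we get $q = \langle\!\langle Y, n^{p'}, \nice{V}\rangle\!\rangle \le p' \le p$ and $q \in E_{n,S,g,h}$.
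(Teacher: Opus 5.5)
\textbf{There is a genuine gap: the whole of condition (iii$_{\nice{V}}^{\nice{U}}$) is missing, and parts of your sketch toward it are false.}

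Your Stage 1 and overall architecture match the paper's proof. The paper also passes to $r\le p$ with $n\le n^r$ and $S\cup\lett(g)\cup\lett(h)\subseteq X^r$, puts $\grp{fgf^{-1}h}$ in the last level of an enrichment over a larger alphabet, and checks (iii$_{\nice{V}}^{\nice{U}}$) via canonical representations. But (iii$_{\nice{V}}^{\nice{U}}$) is the entire content of the lemma, and you leave it as an expectation.

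\textbf{The choice $f=y$ fails.} Take $h=e$ and $U^{p'}_i=\{e\}$ for all $i$, which is a legitimate finite neighbourhood system. Then $y^{-1}\cdot b\cdot e\cdot y=g$ lies in $V_{n^{p'}-1}\cap F(X^{p'})$ but not in $U^{p'}_{n^{p'}-1}$. More generally, for $h=e$, peeling $f$ off one letter per level puts $g$ into $V_{n^{p'}-|f|}$, which defeats every $f$ with $|f|\le n^{p'}$. For $h\neq e$ the same attack works on $b\cdot h^{-1}=fgf^{-1}$ whenever $h^{-1}\in U^{p'}_{n^{p'}}$.

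Your claims that $b$ is cyclically reduced and not conjugate into $F(X^{p'})$ fail exactly when $h=e$. That is the central case, since $E_{n,S,g,e}$ asks for a conjugate of $g$ in $U^q_{n^q}$. What actually matters is that $f$ is longer than the number of conjugation levels. The paper secures this by taking $f=y_1\cdots y_k$ with $k$ \emph{distinct} fresh letters, where $k\ge 2^{|X^r|\cdot 4^{n^r}}$.

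\textbf{With $f=yz^k$, both of the paper's tools for (iii$_{\nice{V}}^{\nice{U}}$) break, and you give no substitute.}
\begin{itemize}
\item The letter-count bound (Corollary \ref{trivial:enrichment:bound}) is what gives $(\grp{g_0}\setminus\{e\})\cap(\bar{Y}\cup\bigcup_i V'_i)=\emptyset$ in Lemma \ref{this:lemma:oct26}. That disjointness is the hypothesis of the reduction Lemma \ref{reduction:lemma}. With only two fresh letters, $\lett$ cannot separate powers of $b$ from elements of the $\{e\}$-enrichment.
\item Corollary \ref{workingman:lemma:7.2} says that deleting all non-trivial $g_0$-powers does not change a product lying in $F(X)$. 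It needs a letter $y_{j_0}$ of $f$ that occurs in no other piece of the canonical representation, and in particular is never a conjugating letter. Such a letter exists in the paper because $f$ has far more distinct letters than a depth-$n^r$ representation has conjugators. In your setting both $y$ and $z$ can be conjugators, so no such letter is guaranteed.
\end{itemize}
So ``tracking occurrences of $y$ and $z$'' is an argument you would still have to invent, for example by bounding runs of $z$ in terms of the depth. It is not a routine check.

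\textbf{Your endgame is incomplete even if the $b$-factors do cancel.} The remaining product still contains conjugations by $y^{\pm1}$ and $z^{\pm1}$. Condition (3$_{\nice{U}}$) for $\nice{U}^{p'}$ only involves letters of $\overline{X^{p'}}$, so it cannot place that product in $U^{p'}_i$. The paper closes this step in two moves:
\begin{itemize}
\item the $\eta$-reduced representation is a canonical representation of an element of the $\{e\}$-enrichment $\nice{V}'$ of $\nice{U}^{p'}$ in $F(Y)$ (Lemma \ref{reduction:lemma});
\item $\nice{V}'$ extends $\nice{U}^{p'}$ (Lemma \ref{cyclic:C-enrichment}), so the element lies in $V'_i\cap F(X^{p'})=U^{p'}_i$.
\end{itemize}
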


The proof is given at the final section \ref{final:final:proofsection}

\ey

\begin{theorem} \label{this:thm:topsimp}
Let $\mathscr{D}$ be a countably infinite refinement of $\mathscr{D}_2$, and let $\F$ be a linear order of $\P$ which intersects all members of $\mathscr{D}$. Then, the group topology $\mathcal{T}_{\mathscr{D}}$ with the neighbourhood basis of the identity $\B_{\mathscr{D}} = \{U_n:n\in\N\}$ produced through $\F$ satisfies the following property: for each $g \in F(X) \setminus \{e\}$ the set $\mathrm{Conj}(g)$ is topologically dense. 
\end{theorem}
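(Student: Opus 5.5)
We argue directly from the definitions, using Lemma \ref{topological:simplifity:set} only through the fact that $\mathscr{D}$ refines $\mathscr{D}_2$, so $\F$ meets every set $E_{n,S,g,h}$. First we invoke Lemma \ref{this:hausdorff:lemma}: since $\mathscr{D}_2 \supseteq \mathscr{D}_{T_2}$, $\mathscr{D}$ is also a refinement of $\mathscr{D}_{T_2}$. Hence $\B_{\mathscr{D}} = \{U_n : n \in \N\}$ is a neighbourhood basis of the identity for the Hausdorff metrizable group topology $\mathcal{T}_{\mathscr{D}}$. Topological density of $\mathrm{Conj}(g)$ means that it meets every non-empty open set. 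Every non-empty open set contains a basic set of the form $U_n \cdot h$ with $h \in F(X)$ and $n \in \N$, since right translates $U_n h$ of the basic neighbourhoods form a base at $h$. So it suffices to fix $g \in F(X)\setminus\{e\}$, $h \in F(X)$ and $n\in\N$, and to produce $w \in F(X)$ with $w g w^{-1} \in U_n \cdot h$.

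For a cleaner match with the definition of $E_{n,S,g,h}$, we work with $U_n \cdot h^{-1}$-type conditions. Applied to $h' = h^{-1}$, the membership $(\mathrm{Conj}(g)\cdot h^{-1}) \cap U_m \neq \emptyset$ says exactly that there are $w$ and $u \in U_m$ with $w g w^{-1} h^{-1} = u$. This gives $w g w^{-1} = u h \in U_m h$. We therefore choose $D \in \mathscr{D}$ with $D \subseteq E_{n,\emptyset,g,h^{-1}}$, which exists by the refinement property, and pick $q \in \F \cap D$. Then $n \le n^q$, and there is $w \in F(X)$ with $w g w^{-1} h^{-1} \in U^q_{n^q}$.

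It remains to check that $U^q_{n^q} \subseteq U_n$. By \eqref{this:eq:19oct2024} we have $U^q_{n^q} \subseteq U_{n^q}$, since $q \in \F$ and $n^q \le n^q$. By Claim \ref{this:claim1:dec92025} together with $e \in U_{m}$ (Proposition \ref{this:dec18:welldefined:prop}), we get $U_{m+1} \subseteq U_{m+1}U_{m+1} \subseteq U_m$, so the sequence is decreasing and $U_{n^q} \subseteq U_n$. Hence $w g w^{-1} \in U_n h$, and $\mathrm{Conj}(g)$ meets every basic open set, so it is dense.

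There is one subtlety, which is the main (minor) point to watch: the conjugating word $w$ in the definition of $E$ ranges over all of $F(X)$, and a priori $w g w^{-1} h^{-1}$ need not lie in $F(X^q)$. This causes no problem, because the set $U^q_{n^q}\subseteq F(X^q)\subseteq F(X)$ is what enters the union defining $U_{n^q}$, so membership transfers directly. The parameter $S$ plays no role here and can be taken empty; the real difficulty, the density of $E_{n,S,g,h}$, is deferred to Lemma \ref{topological:simplifity:set}.
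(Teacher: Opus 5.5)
Your proposal is correct and follows essentially the same route as the paper. Both arguments pick $q\in\F$ inside (a member of $\mathscr{D}$ contained in) $E_{n,S,g,h^{-1}}$, convert $(\mathrm{Conj}(g)\cdot h^{-1})\cap U^q_{n^q}\neq\emptyset$ into $\mathrm{Conj}(g)\cap (U_{n^q}\cdot h)\neq\emptyset$, and pass to $U_n$ using the monotonicity of $\B_{\mathscr{D}}$. The differences are cosmetic: you take $S=\emptyset$ where the paper uses a finite $Z$ with $g,h\in F(Z)$, which is never needed. You also derive $U_{m+1}\subseteq U_m$ explicitly from $U_{m+1}U_{m+1}\subseteq U_m$ and $e\in U_{m+1}$, which is if anything slightly more careful than the paper.
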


\begin{proof}
It suffices to prove the following: for any \by $n \in \N$ \ey and $h \in F(X)$ the intersection $\mathrm{Conj}(g) \cap (U_n \fprod h)$ is non-empty.  Let us consider a finite set $Z \subseteq X$ such that $g, h \in F(Z)$. Consider the corresponding set $E_{n,Z,g,h^{-1}}$ as in \eqref{this:simple:poset:dense:set}. Since $\mathscr{D}$ is a refinement of $\mathscr{D}_2$, the set $\F$ intersects all members of $\mathscr{D}_2$. Therefore, there exists $p \in \F \cap E_{n,Z,g,h^{-1}}$. By the definition of $E_{n,Z,g,h^{-1}}$, we have \begin{equation*}
(\mathrm{Conj}(g) \fprod h^{-1}) \cap U^p_{n^p} \neq \emptyset.
\end{equation*}
Given $w \in (\mathrm{Conj}(g) \fprod h^{-1}) \cap U^p_{n^p}$ we have that $w = u \fprod h^{-1} \in U^p_{n^p}$ with $u \in \mathrm{Conj}(g)$. Therefore $w \cdot h \in (U^p_{n^p} \fprod h) \subseteq (U_{n^p} \fprod h)$ by \eqref{this:eq:19oct2024}. This proves that \begin{equation} \label{this:eq:123:oct312024}
\mathrm{Conj}(g) \cap (U_{n^p} \fprod h) \neq \emptyset.
\end{equation}
We now employ the fact that $\B_{\mathscr{D}}$ is nested (see Claim \ref{this:claim1:dec92025} from Lemma \ref{this:hausdorff:lemma}). The inequality $n \leq n^p$ implies $U_{n^p} \subseteq U_n$. Therefore, \eqref{this:eq:123:oct312024} implies that \begin{equation*}
\mathrm{Conj}(g) \cap (U_{n} \fprod h) \neq \emptyset.
\end{equation*}
Since $n \in \N$ and $h \in F(X)$ were arbitrary, we conclude that $\mathrm{Conj}(g)$ is topologically dense in $(F(X), \mathscr{T})$.
\end{proof}

The following observation is immediate:

\begin{remark}
Let $G$ be a non-trivial topological group. If the set  $\mathrm{Conj}(g)$ is topologically dense in $G$ for each $g \neq e_G$, then $G$ is topologically simple. 
\end{remark}

\begin{corollary} \label{this:coro:jan92026}
Let $X$ be a countably infinite set. The free group $F(X)$ over $X$ admits a group topology $\mathscr{T}$ which is topologically simple and satisfies the $\assgp$. 
\end{corollary}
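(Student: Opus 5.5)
The corollary follows by combining the two mechanisms already set up: one family of dense sets that forces the $\assgp$ and one that forces dense conjugacy classes. I will build a single countable family of dense subsets of $\P$ that refines both $\mathscr{D}_1$ and $\mathscr{D}_2$. Concretely, I take
\begin{equation*}
\mathscr{D} = \mathscr{D}_1 \cup \mathscr{D}_2 = \mathscr{D}_{T_2} \cup \{A_n \cap D_g : n\in\N, g\in F(X)\} \cup \{E_{n,S,g,h} : n\in\N, S\in[X]^{<\omega}, g\in F(X)\setminus\{e\}, h\in F(X)\}.
\end{equation*}
This family is countable because $X$ is countable, so $[X]^{<\omega}$ and $F(X)$ are countable. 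It is trivially a refinement of each of $\mathscr{D}_{T_2}$, $\mathscr{D}_1$ and $\mathscr{D}_2$, since it contains each of them.

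Next I check that every member of $\mathscr{D}$ is dense. The sets $A_n$, $B_S$ and $C_g$ are dense by Lemma \ref{dense:sets}. Each $A_n \cap D_g$ is dense by the Remark following the definition of density: $A_n$ is dense and downward-closed, and $D_g$ is dense, again by Lemma \ref{dense:sets}. The sets $E_{n,S,g,h}$ are dense by Lemma \ref{topological:simplifity:set}, which the paper says it proves in its final section.

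With density in hand, the Rasiowa--Sikorski argument gives a countable linear order $\F\subseteq\P$ meeting every member of $\mathscr{D}$. I build it by enumerating $\mathscr{D}=\{D_k:k\in\N\}$ and choosing $p_{k+1}\le p_k$ with $p_{k+1}\in D_k$, which is possible by density; the starting point exists because $\P\neq\emptyset$. Let $\mathscr{T}=\mathcal{T}_{\mathscr{D}}$. By Lemma \ref{this:hausdorff:lemma} it is a metrizable Hausdorff group topology on $F(X)$. By Lemma \ref{this:lemma:jan92026} it satisfies the $\assgp$. By Theorem \ref{this:thm:topsimp}, $\mathrm{Conj}(g)$ is dense for every $g\neq e$.

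To conclude topological simplicity, take a closed normal subgroup $N\neq\{e\}$ and pick $g\in N\setminus\{e\}$. Normality gives $\mathrm{Conj}(g)\subseteq N$, so $F(X)=\overline{\mathrm{Conj}(g)}\subseteq N$ because $N$ is closed; this is exactly the Remark preceding the corollary. The group is non-trivial because $X$ is non-empty.

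The only substantive ingredient is the density of $E_{n,S,g,h}$, Lemma \ref{topological:simplifity:set}. The corollary rests on it and everything else is bookkeeping, so I expect no obstacle in this step beyond citing that lemma.
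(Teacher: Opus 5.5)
Your proposal is correct and follows the paper's proof. The paper's family $\mathscr{D}_3=\mathscr{D}_1\cup\mathscr{D}_2\cup\{E_{n,S,g,h}\}$ is the same as your $\mathscr{D}_1\cup\mathscr{D}_2$, since the $E$-sets already lie in $\mathscr{D}_2$, and the paper then concludes via Lemma \ref{this:lemma:jan92026} and Theorem \ref{this:thm:topsimp} exactly as you do; your explicit density check, Rasiowa--Sikorski construction and closed-normal-subgroup argument just fill in details the paper leaves implicit.
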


\begin{proof}
Consider the family \begin{equation} \label{family:D3}
\mathscr{D}_{3}= \mathscr{D}_{1} \cup
\mathscr{D}_{2} \cup 
\{E_{n,S,g,h}: n \in \N, S \in [X]^{< \omega}, g\in F(X) \setminus \{e\}, h \in F(X)\}.
\end{equation}

Now $\mathscr{D}_3$ is countably infinite, and refines both $\mathscr{D}_2$ and $\mathscr{D}_1$. Select a linear order $\F$ of $\P$ which intersects all members of $\mathscr{D}_3$ and let $\mathscr{T}$ be the group topology with the neighbourhood basis of the identity $\B_{\mathscr{D}_3} = \{U_n:n\in\N\}$ produced through $\F$. Now $(F(X),\mathscr{T})$ is topologically simple and satisfies the $\assgp$ by Lemma \ref{this:lemma:jan92026} and Theorem \ref{this:thm:topsimp}.
\end{proof}

Since every topologically simple group is $\nsns$, the topologies constructed in Corollary \ref{this:coro:jan92026} provide a very strong contrast between the $\nss$ and $\nsns$ properties:

\begin{corollary}
Let $(F(X),\mathcal{T})$ be as in the conclusion of Theorem \ref{this:main:thm}. Then, there exists an open neighbourhood $U$ of the identity of $G$ with the following properties: \begin{itemize}
\item[(i)] $F(X)$ is algebraically generated by the union of all the subgroups of $G$ contained in $U$, and
\item[(ii)] $U$ contains no non-trivial normal subgroup of $F(X)$.
\end{itemize}
\end{corollary}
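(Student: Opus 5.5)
We derive this corollary directly from the two properties of $(F(X),\mathcal{T})$ given by Theorem \ref{this:main:thm}, namely the $\assgp$ and topological simplicity (here $G$ denotes $F(X)$). Fix $g_0 \in F(X)\setminus\{e\}$. Since $\mathcal{T}$ is Hausdorff, the set $F(X)\setminus\{g_0\}$ is open, so we may choose an open neighbourhood $U$ of the identity with $g_0 \notin U$. For instance, using Claim \ref{this:claim4:dec92025}, take $U = U_m$ from the basis $\B_{\mathscr{D}_3}$ with $g_0 \notin U_m$, or take its interior. We show that this $U$ satisfies (i) and (ii).

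For (i), the $\assgp$ (Definition \ref{assgp:def}) gives $F(X) = \grp{\Cyc(U)}$. Every $x \in \Cyc(U)$ satisfies $\grp{x}\subseteq U$, so $\Cyc(U)$ is contained in the union of all subgroups of $G$ contained in $U$. Conversely, any subgroup $H \subseteq U$ satisfies $H \subseteq \Cyc(U)$. Hence the subgroup algebraically generated by that union equals $\grp{\Cyc(U)} = F(X)$.

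For (ii), let $N$ be a normal subgroup of $F(X)$ with $N \subseteq U$. Its closure $\overline{N}$ is a closed normal subgroup, since closures of normal subgroups in a topological group are normal. By topological simplicity, $\overline{N} = \{e\}$ or $\overline{N} = F(X)$. We now rule out the second case. Choose a symmetric neighbourhood $V$ of the identity such that $V \cdot V \subseteq U$; such a $V$ exists because the basis satisfies $U_{m+1}\cdot U_{m+1} \subseteq U_m$ by Claim \ref{this:claim1:dec92025}, so with $U = U_m$ we can take $V = U_{m+1}$. Since $N \subseteq V$ is not yet known, we argue directly. Pick any $h \ne e$ in $N$. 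Theorem \ref{this:thm:topsimp} says $\mathrm{Conj}(h)$ is dense, and normality gives $\mathrm{Conj}(h) \subseteq N \subseteq U$. Therefore $F(X) = \overline{\mathrm{Conj}(h)} \subseteq \overline{U} \subseteq U\cdot U_{m+1} \subseteq U_{m-1}$, where we use that the closure of a set $A$ lies in $A\cdot W$ for every identity neighbourhood $W$.

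To obtain a contradiction we need the shrinking step to land in a neighbourhood that misses $g_0$. So we should have fixed $U = U_{m+1}$ with $g_0 \notin U_m$ from the start; then $F(X) \subseteq \overline{U_{m+1}} \subseteq U_{m+1}U_{m+1} \subseteq U_m$, contradicting $g_0 \notin U_m$. Thus $N$ has no element other than $e$, so $N = \{e\}$. This choice of $U$ (one step down the basis from a set avoiding $g_0$) is the only delicate point. Everything else is a direct unwinding of definitions together with Lemma \ref{this:lemma:jan92026} and Theorem \ref{this:thm:topsimp}. In fact the closure argument via $\mathrm{Conj}(h)$ makes the appeal to $\overline{N}$ unnecessary.
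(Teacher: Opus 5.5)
Your proof is correct. Part (i) is exactly what the paper intends: the $\assgp$ gives $F(X)=\grp{\Cyc(U)}$ for every open identity neighbourhood $U$, and $\Cyc(U)$ is precisely the union of the subgroups contained in $U$.

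For (ii) your route differs from the paper's. The paper only invokes the fact that every topologically simple group is $\nsns$. You instead build the witness explicitly: you take $U$ inside $U_{m+1}$ with $g_0\notin U_m$, and use the density of $\mathrm{Conj}(h)$ from Theorem \ref{this:thm:topsimp} to get $F(X)=\overline{\mathrm{Conj}(h)}\subseteq\overline{U_{m+1}}\subseteq U_{m+1}U_{m+1}\subseteq U_m$. This relies on a stronger input than topological simplicity. It also ties the argument to the specific topology of Corollary \ref{this:coro:jan92026} and its basis $\B_{\mathscr{D}_3}$, which is what the paper has in mind. The $\overline{N}$ argument you started and then dropped is the one behind the paper's remark. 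It works in any Hausdorff topologically simple group: $\overline{N}$ is a closed normal subgroup contained in $\overline{U}\neq F(X)$, hence trivial. So it covers every topology as in the conclusion of Theorem \ref{this:main:thm}, not just the constructed one.

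Your write-up makes explicit something the paper's one-line justification leaves implicit: $U$ must have proper \emph{closure}, not merely be proper. For a letter $x$, the open set $F(X)\setminus\{x\}$ is an identity neighbourhood that contains the non-trivial normal subgroup $[F(X),F(X)]$, so a merely proper $U$ can fail.

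For a clean version, fix the correct $U$ at the outset instead of correcting midway. Also take $U=\mathrm{int}(U_{m+1})$: the basic sets $U_n$ are only neighbourhoods of $e$, not necessarily open, while both the statement and the $\assgp$ refer to open neighbourhoods.
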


Recall that $S_\infty$ is topologically simple and admits no non-trivial continuous homomorphisms to $\nss$ groups. However, the group $S_\infty$ can never be equipped with an $\assgp$ group topology (see \cite[Example 5.4]{DS_SSGP}). Our result shows that a free group $F(X)$ of countable rank with our topology is rather similar to $S_\infty$ with respect to their behavior of their continuous homomorphic images, with a clear dividing line being Polishness (of $S_\infty$) vs the $\assgp$ (of $F(X)$).

An additional observation is the following:

\begin{corollary}
Let $\mathrm{adTopSimp}$ be the class of groups which admit a topologically simple group topology. Then, every countable group $G$ is realized as a quotient of a group $F(G) \in \mathrm{adTopSimp}$.
\end{corollary}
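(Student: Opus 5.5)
The plan is to combine Corollary \ref{this:coro:jan92026} with the universal property of free groups. Fix a countable group $G$ and take as generating set $X$ a countably infinite set of symbols indexed by $G$ itself; if $G$ is finite, we index by $G \times \N$ instead, so that $X$ is always countably infinite. We let $F(G) := F(X)$ denote the free group over $X$. The universal property of free groups gives a homomorphism $\pi \colon F(X) \to G$ sending the generator indexed by $g$ (respectively by $(g,k)$) to $g$. This homomorphism is surjective because every element of $G$ is the image of some letter. By the first isomorphism theorem, $G \cong F(X)/\ker \pi$, so $G$ is a quotient of $F(G)$.

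It remains to check that $F(G) \in \mathrm{adTopSimp}$. Since $X$ is countably infinite, Corollary \ref{this:coro:jan92026} (equivalently Theorem \ref{this:main:thm}) supplies a group topology $\mathscr{T}$ on $F(X)$ that is topologically simple and in addition satisfies the $\assgp$. In particular $F(X)$ admits a topologically simple group topology, so it belongs to the class. This completes the argument.

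No step here is a real obstacle, since all the depth lies in Corollary \ref{this:coro:jan92026}, which rests on the density Lemma \ref{topological:simplifity:set}. The only care needed is to keep the alphabet countably infinite even when $G$ is finite; padding with the factor $\N$ handles this. We should also remark that the quotient is meant purely algebraically. The kernel $\ker\pi$ is normal but in general not closed in $\mathscr{T}$, and by topological simplicity it is dense unless $\pi$ is injective. So $G$ is not claimed to carry the quotient topology; the statement only asserts that $G$ is a group-theoretic quotient of a group in $\mathrm{adTopSimp}$.
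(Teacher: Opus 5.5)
Your proposal is correct and matches the paper's intended argument, which the paper leaves implicit after stating the corollary: surject a free group of countably infinite rank onto $G$ and invoke Corollary \ref{this:coro:jan92026}. Padding the alphabet with $G \times \N$ when $G$ is finite is a sensible detail the paper does not spell out, since its construction only covers countably infinite alphabets.
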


\section{Finite neighbourhood systems and their canonical representations}
\label{sec:nghbd}

Proofs of the cited statements in this section are given explicitly in \cite{SY_FreeSSGP}.

\begin{definition}[{\by\cite[Definition 4.4]{SY_FreeSSGP}\ey}]
\label{def:enrichment}
Let $\nice{U} = \{U_i : i \leq n\}$ be a finite sequence of subsets of $F(X)$ for some set $X$.
\begin{itemize}
\item[(i)]
Let $B\subseteq F(X)$. Define 
\begin{equation}
\label{eq:def:Vn}
V_n=U_n\cup B.
\end{equation}
By finite reverse induction on $i=n-1,n-2,\dots,0$, define
\begin{equation}
\label{eq:def:Vi}
V_i=U_i\cup\bigcup_{x\in \dash{X}} x\fprod V_{i+1}\fprod V_{i+1}\fprod x^{-1}.
\end{equation}
We shall call the sequence $\nice{V}=\{V_i:i\le n\}$ the {\em $B$-enrichment of the sequence $\nice{U}$ in $F(X)$\/}.
\item[(ii)]
For a set $C\subseteq F(X)$, we shall call the $\left(\bigcup_{c\in C} \grp{c}\right)$-enrichment of 
$\nice{U}$ in $F(X)$ the {\em cyclic $C$-enrichment of $\nice{U}$ in $F(X)$\/}.
\end{itemize}
\end{definition}

\by
In a future technical lemma, we shall need to consider enrichments done with the \emph{neutral element} $e$ with respect to a larger alphabet. This type of enrichment introduces conjugations with respect to the new letters in the larger alphabet:

\begin{remark}
Let $n \geq 1$, and $X$ and $Y$ be non-empty sets such that $X \subseteq Y$ and $Y \setminus X \neq \emptyset$. Assume that $\nice{U} = \{U_i : i \leq n\}$ is a finite sequence of non-empty subsets of $F(X)$. Let $\nice{V}=\{V_i:i\le n\}$ be the cyclic $\{e\}$-enrichment of $\nice{U}$ in $F(Y)$. If there exists an element $g \in U_n \setminus \{e\}$, then $\nice{V} \neq \nice{U}$. Indeed, for each $y \in Y \setminus X$ one has $y \cdot g \cdot y^{-1} \in V_{n-1}$ (since $e \in V_n$). However, since $g \in F(X) \setminus \{e\}$ and $y \not \in X$, the product $y \cdot g \cdot y^{-1}$ is already in irreducible form and prominently features the foreign letter $y$. Because $U_j \subseteq F(X)$ by definition, it follows that $y \cdot g \cdot y^{-1} \not \in U_j$ for any $j \leq n$.
\end{remark}
\ey

\begin{lemma}[{\cite[Lemma 4.5]{SY_FreeSSGP}}]
\label{enrichment:lemma}
Let $X$ be a set and $\nice{U}=\{U_i:i\le n\}$ be a finite sequence
such that:
\begin{itemize}
\item[(a)] $U_i\subseteq F(X)$ for all $i\le n$,
\item[(b)] $U_i^{-1}=U_i$ for every $i\le n$,
\item[(c)] $e\in U_n$. 
\end{itemize}
Furthermore, 
let $B\subseteq F(X)$ be a set satisfying
\begin{itemize}
\item[(d)] $B^{-1}=B$.
\end{itemize}
Then the $B$-enrichment of $\nice{U}$ in $F(X)$ is a \fns\ for $F(X)$.
\end{lemma}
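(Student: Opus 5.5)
The plan is to verify the four conditions (1$_{\nice{V}}$)--(4$_{\nice{V}}$) of Definition \ref{SPSa} directly for the $B$-enrichment $\nice{V}=\{V_i:i\le n\}$ of $\nice{U}$ in $F(X)$, as constructed in Definition \ref{def:enrichment}(i). Conditions (1$_{\nice{V}}$), (2$_{\nice{V}}$) and (4$_{\nice{V}}$) will be handled by a finite reverse induction on $i=n,n-1,\dots,0$ that follows the recursive definition \eqref{eq:def:Vn}--\eqref{eq:def:Vi}. Condition (3$_{\nice{V}}$) will be read off from the definition itself. (The length $n$ is inherited from $\nice{U}$, which I take to satisfy $n\in\N^+$ as required in Definition \ref{SPSa}.)

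For (1$_{\nice{V}}$), the base case is $V_n=U_n\cup B\subseteq F(X)$, which holds by (a) and because $B\subseteq F(X)$. For the inductive step, assume $V_{i+1}\subseteq F(X)$. Each $x\in\dash{X}$ lies in $F(X)$, so $x\fprod V_{i+1}\fprod V_{i+1}\fprod x^{-1}\subseteq F(X)$ because $F(X)$ is a group; together with $U_i\subseteq F(X)$ this gives $V_i\subseteq F(X)$.

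For (2$_{\nice{V}}$), the base case is that $V_n=U_n\cup B$ is a union of two symmetric sets by (b) and (d), hence symmetric. For the inductive step, assume $V_{i+1}^{-1}=V_{i+1}$. For any $x\in\dash{X}$ inversion reverses the order of a product, so
$$(x\fprod V_{i+1}\fprod V_{i+1}\fprod x^{-1})^{-1}=x\fprod V_{i+1}^{-1}\fprod V_{i+1}^{-1}\fprod x^{-1}=x\fprod V_{i+1}\fprod V_{i+1}\fprod x^{-1}.$$
Thus every set in the union \eqref{eq:def:Vi} is symmetric. Since $U_i$ is symmetric by (b), $V_i$ is symmetric as well.

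Condition (3$_{\nice{V}}$) is immediate: for each $i<n$, \eqref{eq:def:Vi} gives $\bigcup_{x\in\dash{X}}x\fprod V_{i+1}\fprod V_{i+1}\fprod x^{-1}\subseteq V_i$. Condition (4$_{\nice{V}}$) follows from (c), since $e\in U_n\subseteq V_n$.

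I expect no step to be a genuine obstacle. The only point needing care is the symmetry step, where one must note that the two middle factors are the same set $V_{i+1}$, so that reversing their order under inversion is harmless.
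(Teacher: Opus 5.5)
Your proposal is correct. The paper does not reproduce a proof of this lemma; it defers to \cite{SY_FreeSSGP}. Your direct verification of the four conditions of Definition \ref{SPSa} is the natural argument: reverse induction on $i$ for (1$_{\nice{V}}$) and (2$_{\nice{V}}$), using $(x\fprod A\fprod A\fprod x^{-1})^{-1}=x\fprod A^{-1}\fprod A^{-1}\fprod x^{-1}$ for the symmetry step, with (3$_{\nice{V}}$) read off from \eqref{eq:def:Vi} and (4$_{\nice{V}}$) coming from (c).
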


\begin{definition} [{\cite[Definition 4.8]{SY_FreeSSGP}}]
\label{extension:def}
Let $X$ and $Y$ be sets such that $X\subseteq Y$. For a \fns\ $\nice{U}$ for $F(X)$, we shall denote by $\nice{U}_Y$ the 
cyclic $(Y\setminus X)$-enrichment
of $\nice{U}$ in $F(Y)$.
\end{definition}

\begin{lemma}[{\cite[Lemma 5.3]{SY_FreeSSGP}}]
\label{cyclic:C-enrichment}
Let $X$ be a 
set and $\nice{U}$ be a
\fns\  for $F(X)$.
Then for every set $Y$ containing $X$ and each 
set $C\subseteq \grp{Y\setminus X}$,
the cyclic $C$-enrichment $\nice{V}$
of $\nice{U}$ extends it.
\end{lemma}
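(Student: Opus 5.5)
The plan is to verify the three conditions of Definition \ref{SPSb} directly. The main tool is a retraction homomorphism that erases the foreign letters. Throughout, $\nice{V}=\{V_i:i\le n\}$ is the $B$-enrichment of $\nice{U}=\{U_i:i\le n\}$ in $F(Y)$, where $B=\bigcup_{c\in C}\grp{c}$.

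First I check that $\nice{V}$ is a \fns\ for $F(Y)$, using Lemma \ref{enrichment:lemma} with $Y$ in place of $X$. Hypothesis (a) holds because $U_i\subseteq F(X)\subseteq F(Y)$. Hypotheses (b) and (c) hold because $\nice{U}$ is a \fns. For (d), $B$ is a union of cyclic subgroups, so $B^{-1}=B$. Conditions (i$_{\nice{V}}^{\nice{U}}$) and (ii$_{\nice{V}}^{\nice{U}}$) are immediate, since $X\subseteq Y$ and both sequences have length $n$. Definition \eqref{eq:def:Vn}--\eqref{eq:def:Vi} gives $U_i\subseteq V_i$, hence $U_i\subseteq V_i\cap F(X)$. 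So the only real content is the reverse inclusion $V_i\cap F(X)\subseteq U_i$.

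For this, let $\pi:F(Y)\to F(X)$ be the unique homomorphism with $\pi(x)=x$ for $x\in X$ and $\pi(y)=e$ for $y\in Y\setminus X$. Then $\pi$ restricts to the identity on $F(X)$. Moreover, $\pi(\dash{Y})\subseteq\dash{X}$, because $\dash{X}$ contains $e$. I will prove $\pi(V_i)\subseteq U_i$ by reverse induction on $i=n,n-1,\dots,0$.

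\emph{Base case $i=n$.} Since $C\subseteq\grp{Y\setminus X}$ and $\pi$ kills $Y\setminus X$, we get $\pi(B)=\{e\}$. Hence $\pi(V_n)=\pi(U_n)\cup\pi(B)=U_n\cup\{e\}=U_n$, using (4$_\nice{U}$).

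\emph{Inductive step.} Assume $\pi(V_{i+1})\subseteq U_{i+1}$ and take $x\in\dash{Y}$. Then
\begin{equation*}
\pi(x\fprod V_{i+1}\fprod V_{i+1}\fprod x^{-1})\subseteq \pi(x)\fprod U_{i+1}\fprod U_{i+1}\fprod \pi(x)^{-1}\subseteq U_i,
\end{equation*}
where the last inclusion is (3$_\nice{U}$) applied with $\pi(x)\in\dash{X}$. Combined with $\pi(U_i)=U_i$, this gives $\pi(V_i)\subseteq U_i$.

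Finally, take any $w\in V_i\cap F(X)$. Since $\pi$ fixes $F(X)$, we have $w=\pi(w)\in U_i$, which proves (iii$_{\nice{V}}^{\nice{U}}$). I do not expect a genuine obstacle. The one point needing care is that the letters in (3$_\nice{U}$) range over $\dash{X}$, which includes $e$. This is exactly what makes the foreign conjugating letters harmless after applying $\pi$, and it lets us avoid any analysis of irreducible forms.
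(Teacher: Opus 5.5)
Your proof is correct and complete. The retraction $\pi$ fixes $F(X)$ and maps $\dash{Y}$ into $\dash{X}$. The base case only needs $\pi(B)\subseteq\{e\}\subseteq U_n$; this is an inclusion rather than the equality you wrote when $C=\emptyset$, which is harmless. The inductive step is exactly (3$_{\nice{U}}$) applied to $\pi(x)\in\dash{X}$, including the case $\pi(x)=e$.

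The paper does not reprove this lemma; it quotes it from \cite{SY_FreeSSGP}. The useful comparison is therefore with the extension result it does prove, Lemma \ref{this:corollary:oct312024:main}, where $B=\grp{g_0}$ and $g_0=f\wprod g\wprod f^{-1}\wprod h$. There the paper relies on canonical representations, the erasing map $\eta$ of Lemma \ref{reduction:lemma}, the letter bound of Corollary \ref{trivial:enrichment:bound}, and the cancellation analysis of Section \ref{calculation:section}.

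Your retraction proves the stronger invariant $\pi(V_i)\subseteq U_i$. The same argument works verbatim for any symmetric $B\subseteq F(Y)$ with $\pi(B)\subseteq U_n$. However, that invariant is genuinely false for $B=\grp{g_0}$ whenever $\pi(g_0)=g\fprod h\notin U_n$, because $g_0\in V_n$.

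So your argument is the efficient tool exactly when the added words vanish under $\pi$. That is the situation here, and also in the paper's use of this lemma with $C=\{e\}$ inside the proof of Lemma \ref{this:corollary:oct312024:main}. Words like $g_0$, whose image in $F(X)$ survives, force the paper's finer analysis of irreducible forms.
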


\begin{definition}[{\cite[Definition 6.1]{SY_FreeSSGP}}]
\label{canonical:representation:definition}
Assume that $\nice{U} = \{U_i : i \leq n\}$ is a finite sequence of subsets of $F(X)$, $B\subseteq F(X)$ and $\nice{V}=\{V_i:i\le n\}$ is the $B$-enrichment of $\nice{U}$ in $F(X)$. 
By
finite reverse induction 
on $i=n,n-1,\dots,0$, we shall define a (not necessarily unique)
{\em canonical representation\/}
\begin{equation}
\label{eq:h:representation}
h=a_1\fprod a_2\fprod \cdots\fprod a_m
\
\text{ (for a suitable }
m\in\N^+)
\end{equation}
of every element $h\in V_i$ 
as follows.

{\em Basis of induction\/}.
For $h\in V_n$, we let $h=a_1$ be a canonical representation of $h$.

{\em Inductive step\/}.
Suppose that $i$ is an integer satisfying $0\le i<n$ and we have already defined a canonical representation of every element
$h\in V_{i+1}$.
We fix $h\in V_i$ and define its canonical representation as in 
\eqref{eq:h:representation} according to the rules outlined below.
By \eqref{eq:def:Vi}, at least one (perhaps both) of the following cases hold.

{\sl Case 1\/}. $h\in U_i$. In this case, we let $h=a_1$ be a canonical representation of $h$.

{\sl Case 2\/}. 
$h=x\fprod u\fprod v\fprod x^{-1}$ for 
suitable
  $x\in\dash{X}$ and 
$u,v\in V_{i+1}$. 
Suppose also that 
\begin{equation*}
u=b_1\fprod b_2\fprod\cdots\fprod b_{m_1}
\
\text{ and }
\ 
v=c_1\fprod c_2\fprod\cdots\fprod c_{m_2},
\end{equation*}
are some canonical representations of $u$ and $v$, respectively.
(These canonical representations were already defined, as $u,v\in V_{i+1}$.)
Then we call 
\begin{equation*}
\label{eq:h:gappei}
h=x\fprod 
b_1\fprod b_2\fprod\cdots\fprod b_{m_1}
\fprod
c_1\fprod c_2\fprod\cdots\fprod c_{m_2}
\fprod x^{-1}
\end{equation*}
a canonical representation of $h$.
\end{definition}

\begin{lemma}[{\cite[Lemma 6.2]{SY_FreeSSGP}}]
\label{reduction:lemma}
Let $X$ be a set and $\nice{U}=\{U_i:i\le n\}$ 
be a finite sequence satisfying items (a), (b) and (c) of Lemma
\ref{enrichment:lemma}.
Assume that $B'\subseteq B\subseteq F(X)$ and 
$B'$ is symmetric.
Let
$\nice{V}'=\{V_i':i\le n\}$
and
$\nice{V}=\{V_i:i\le n\}$ 
be the $B'$-enrichment and the $B$-enrichment of $\nice{U}$
in $F(X)$, respectively.
Suppose that 
\begin{equation*}
\label{eq:12:g}
(B\setminus B')\cap \left(\bar{X}\cup\bigcup_{i=1}^n V_i'\right)\subseteq \{e\}.
\end{equation*}

Let $\eta:F(X)\to F(X)$ be the map defined by
\begin{equation*}
\label{eq:a_l':new}
\eta(g)=
\begin{cases}
e &\text{if } g\in B\setminus B'\\
g & \text{otherwise}.
 \end{cases}
\end{equation*}

If $i\le n$ and $h=a_1\fprod a_2\fprod\cdots\fprod a_m$ is a canonical representation  
of some element $h\in V_i$,
then
$
h'=\eta(a_1)\fprod \eta(a_2)\fprod\cdots\fprod \eta(a_m)$ is a canonical representation of some element $h'\in V_i'$.
\end{lemma}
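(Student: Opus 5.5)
The plan is a finite reverse induction on $i=n,n-1,\dots,0$, following the recursion in Definition \ref{canonical:representation:definition}. At each level, and for each case of that recursion, I will check that applying $\eta$ termwise to a canonical representation of $h\in V_i$ gives a sequence produced by the same recursion for the $B'$-enrichment $\nice{V}'$. First, two auxiliary facts. Since $e\in U_n\subseteq V_n'$, $e\in\dash{X}$ and $e=e\fprod e\fprod e\fprod e^{-1}$, we get $e\in V_i'$ for every $i\le n$. Also $\eta(e)=e$ in every case, so the exceptional value $e$ allowed in the hypothesis never causes trouble.

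\emph{Basis ($i=n$).} A canonical representation of $h\in V_n=U_n\cup B$ is $h=a_1$ with $a_1=h$.
\begin{itemize}
\item If $h\notin B\setminus B'$, then $h\in U_n\cup B'=V_n'$ and $\eta(a_1)=h$, which is a one-term canonical representation in $\nice{V}'$.
\item If $h\in B\setminus B'$, then $\eta(a_1)=e\in U_n\subseteq V_n'$, again a one-term canonical representation.
\end{itemize}
Note that $h$ could lie in $U_n\cap(B\setminus B')$. Then $h\in V_n'$, and the hypothesis $(B\setminus B')\cap V_n'\subseteq\{e\}$ forces $h=e$, so both descriptions agree.

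\emph{Inductive step, Case 2.} Here $h=x\fprod u\fprod v\fprod x^{-1}$ with $x\in\dash{X}$ and $u,v\in V_{i+1}$, and the canonical representation is $x\fprod b_1\cdots b_{m_1}\fprod c_1\cdots c_{m_2}\fprod x^{-1}$.
\begin{itemize}
\item Because $\dash{X}$ is symmetric, the hypothesis $(B\setminus B')\cap\dash{X}\subseteq\{e\}$ gives $\eta(x)=x$ and $\eta(x^{-1})=x^{-1}$.
\item By the induction hypothesis, $\eta(b_1)\cdots\eta(b_{m_1})$ and $\eta(c_1)\cdots\eta(c_{m_2})$ are canonical representations of some $u',v'\in V_{i+1}'$.
\item Then $x\fprod u'\fprod v'\fprod x^{-1}\in V_i'$ by \eqref{eq:def:Vi} applied to $\nice{V}'$.
\item The concatenated sequence $x,\eta(b_1),\dots,\eta(c_{m_2}),x^{-1}$ is exactly the Case 2 canonical representation of this element.
\end{itemize}
This is the heart of the argument. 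It uses only that $\eta$ fixes the conjugating letters and that the recursion defining canonical representations is the same for $\nice{V}$ and $\nice{V}'$.

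\emph{Inductive step, Case 1.} Here $h\in U_i$ with one-term representation $a_1=h$.
\begin{itemize}
\item If $h\notin B\setminus B'$, then $\eta(h)=h\in U_i\subseteq V_i'$ and we are done.
\item If $h\in B\setminus B'$ and $i\ge1$, then $h\in U_i\subseteq V_i'$, so the hypothesis on $\bigcup_{i=1}^n V_i'$ forces $h=e$ and $\eta(h)=h$.
\end{itemize}
This leaves the boundary situation $i=0$ with $h\in U_0\cap(B\setminus B')$, $h\neq e$. The hypothesis does not cover $V_0'$, and $\eta(h)=e$ may have no one-term canonical representation, since $e$ need not lie in $U_0$. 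I expect this to be the main obstacle. The point to check is the convention in Definition \ref{canonical:representation:definition}: the one-term representation in Case 1 records $a_1$ as an element of $U_i$, not of $B$. So $\eta$ should be read as acting only on terms that entered through the basis level $V_n$ (the $B$-atoms); equivalently, Case 1 atoms are left untouched. With that reading, Case 1 is trivial at every level: $h'=h\in U_i\subseteq V_i'$.

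If the literal reading is intended instead, I would first try to show that this configuration cannot arise in the intended applications. There $B\setminus B'$ consists of words containing foreign letters, while $U_0\subseteq F(X)$ for the smaller alphabet, so $U_0\cap(B\setminus B')=\emptyset$.
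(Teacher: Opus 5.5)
Your reverse induction along the recursion of Definition \ref{canonical:representation:definition} is the natural argument. The paper gives no proof of this lemma and defers to \cite{SY_FreeSSGP}. Your basis, your Case 2 and your Case 1 for $i\ge 1$ are all correct.

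The trouble is the boundary case you flagged: $i=0$ with $h\in U_0\cap(B\setminus B')$, $h\neq e$. It cannot be closed, because the statement as quoted is false there. Take $X=\{x\}$, $n=1$, $U_1=\{e\}$, $U_0=\{x^5,x^{-5}\}$, $B'=\{e\}$ and $B=\{e,x^5,x^{-5}\}$.
\begin{itemize}
\item Conditions (a)--(c) hold, and $V'_1=\{e\}$, so $(B\setminus B')\cap(\bar{X}\cup V'_1)=\emptyset$.
\item The element $x^5\in U_0\subseteq V_0$ has the Case 1 canonical representation $(x^5)$, and $\eta(x^5)=e$.
\item At level $0$, a one-term sequence is canonical only for elements of $U_0$, and $e\notin U_0$. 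The element $e\in V'_0$ only has Case 2 representations, such as $(x,e,e,x^{-1})$.
\end{itemize}
So your first remedy, reading $\eta$ as acting only on basis-level atoms, does not prove the lemma; it changes the statement. In the statement, $\eta$ is a map on $F(X)$ applied to every term, and that is how Corollary \ref{workingman:lemma:7.2} and Lemma \ref{this:corollary:oct312024:main} use it.

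The correct repair is an extra hypothesis, not a reinterpretation.
\begin{itemize}
\item Assume $e\in U_0$. This is automatic when $\nice{U}$ is a finite neighbourhood system, by (3$_{\nice{U}}$) and (4$_{\nice{U}}$) of Definition \ref{SPSa}, which is the situation in the application. Then $\eta(h)=e\in U_0$, so $(e)$ is a Case 1 canonical representation of $e\in V'_0$, and your induction is complete.
\item Alternatively, add $V'_0$ to the hypothesis. The letter count in Lemma \ref{this:lemma:oct26} covers $i=0$ as well, since $|X|\cdot 4^{n}<2^{|X|\cdot 4^n}$.
\end{itemize}

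Even without a repair, your auxiliary fact $e\in V'_0$ gives the weaker conclusion $h'\in V'_0$ as an element. Level $0$ is never fed back into the recursion, and membership in $V'_i$ is all that Lemma \ref{this:corollary:oct312024:main} actually uses. Your second observation is also correct: $U_0\subseteq F(X)$ misses $\grp{g_0}\setminus\{e\}$ in that application. But it concerns the application, not the lemma as stated.
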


\begin{lemma}[{\cite[Lemma 6.3]{SY_FreeSSGP}}]
\label{extension:set:c}
Let $X$ and $Y$ be sets such that $X\not=\emptyset$ and $X\subseteq Y$.  Let $\nice{U}=\{U_i:i\le n\}$ be a \fns\ for $F(X)$ and let $\nice{V}=\{V_i:i\le n\}$ be its 
cyclic $(Y\setminus X)$-enrichment
in $F(Y)$. Then
\begin{equation*}
\label{sum:of:letters}
\sum_{l=1}^m |\lett(a_l)|\le |X|\cdot 4^{n-i}
\end{equation*}
whenever $i \leq n$
and 
$h=a_1\fprod a_2\fprod\cdots\fprod a_m$ is a
canonical representation of $h\in V_i$ as in Definition 
\ref{canonical:representation:definition}.
\end{lemma}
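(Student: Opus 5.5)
The plan is a finite reverse induction on $i=n,n-1,\dots,0$ that mirrors the recursive construction of canonical representations in Definition \ref{canonical:representation:definition}. The inductive claim is that every canonical representation $h=a_1\fprod\cdots\fprod a_m$ of an element $h\in V_i$ satisfies $\sum_{l=1}^m|\lett(a_l)|\le |X|\cdot 4^{n-i}$. Since the enrichment is taken in $F(Y)$, the conjugating letters range over $\bar{Y}=Y\cup Y^{-1}\cup\{e\}$ rather than $\bar{X}$. Throughout I use the hypothesis $X\neq\emptyset$, that is, $|X|\ge 1$.

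For the base case $i=n$, a canonical representation of $h\in V_n$ is $h=a_1$ with $a_1=h$. By \eqref{eq:def:Vn} and Definition \ref{def:enrichment}(ii), $V_n=U_n\cup\bigcup_{y\in Y\setminus X}\grp{y}$. If $h\in U_n\subseteq F(X)$, then $\lett(h)\subseteq X$, so $|\lett(h)|\le|X|$. Otherwise $h=y^k$ for some $y\in Y\setminus X$ and $k\in\Z$, so $|\lett(h)|\le 1\le|X|$. Either way the sum is at most $|X|=|X|\cdot 4^0$.

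For the inductive step, fix $i<n$ and assume the bound $|X|\cdot 4^{n-i-1}$ for all canonical representations of elements of $V_{i+1}$. Let $h\in V_i$ have a canonical representation; it arises from one of the two cases of the definition.
\begin{itemize}
\item[Case 1.] Here $h=a_1\in U_i\subseteq F(X)$, so $|\lett(a_1)|\le|X|\le|X|\cdot4^{n-i}$.
\item[Case 2.] Here the representation is $x\fprod b_1\cdots b_{m_1}\fprod c_1\cdots c_{m_2}\fprod x^{-1}$ with $x\in\bar Y$, where the $b$'s and $c$'s are canonical representations of elements $u,v\in V_{i+1}$. The two letters $x$ and $x^{-1}$ contribute at most $1$ each. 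The inductive hypothesis bounds the $b$'s and the $c$'s by $|X|\cdot4^{n-i-1}$ each. The total is therefore at most
\[
2+2|X|\cdot4^{n-i-1}\le 2|X|\cdot4^{n-i-1}+2|X|\cdot4^{n-i-1}=|X|\cdot4^{n-i},
\]
using $1\le|X|\le|X|\cdot4^{n-i-1}$.
\end{itemize}
This completes the induction.

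There is no serious obstacle. The only points needing care are the following.
\begin{itemize}
\item The generators $y^k$ added at level $n$ are foreign to $F(X)$, yet they involve a single letter, so the assumption $X\neq\emptyset$ absorbs their contribution into $|X|$.
\item The conjugating letters in Case 2 come from $\bar Y$, not $\bar X$; they still involve at most one letter each, and none when $x=e$.
\item The bound is proved for every canonical representation, since these are not unique. The induction handles this automatically because it quantifies over all choices at each level.
\end{itemize}
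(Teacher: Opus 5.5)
Your proof is correct and matches the intended argument, which the paper only cites from \cite{SY_FreeSSGP}. That argument is a finite reverse induction on $i$ following the two cases of Definition~\ref{canonical:representation:definition}. In Case 2 the conjugating letters $x^{\pm1}\in\bar{Y}$ contribute at most $2$, and the estimate $2+2|X|\cdot 4^{n-i-1}\le |X|\cdot 4^{n-i}$ uses $X\neq\emptyset$, which you correctly identify also as the reason the single-letter generators $y^k$ at level $n$ are absorbed into $|X|$.
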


\section{Cancelation of special foreign words}
\label{calculation:section}

\by 

Let $X$ be a set, and let $g$ and $h$ be arbitrary words in $F(X)$ where $g$ is assumed non-trivial. We let $k \in \N$ be a positive integer. Given the four parameters $(X,g,h,k)$ construct the following setting in this section:

\begin{setting}[$X,Y,g,h,k$] \label{this:master:set}
We let $Y$ be a non-empty set containing $X$ such that $Y \setminus X = \{y_1,\dots,y_k\}$. We let $f \in F(Y)$ be the word defined as $f = \prod_{i = 1}^k y_i$. We then set \begin{equation} \label{this:special:word:feb5:2026}
g_0 = f \wprod g \wprod f^{-1} \wprod h.
\end{equation}

\end{setting}

Under the above setting, we introduce the following master hypothesis:

\begin{hypothesis} \label{this:master:hypothesis}
Under Setting \ref{this:master:set} $(X,Y,g,h,k)$, assume we are given a word $h^* \in F(Y)$ of the form \begin{equation} \label{this:eq101:oct292024}
h^* = w_1\fprod v_1\fprod w_2\fprod v_2\fprod\cdots\fprod w_n\fprod v_n\fprod w_{n+1}
\end{equation}
where $\{w_i : i \leq n+1\} \subseteq F(Y)$, which satisfies the following conditions: \begin{itemize}
\by
\item[(i)] There exists a letter $y_{j_0} \in Y \setminus X$ such that $y_{j_0} \not \in \lett(w_i)$ for any $i \in \{1, \dots, n+1\}$,
\ey
\item[(ii)] $v_i \in \{g_0,g_0^{-1}\}$ for each $i \in \{1, \dots, n\}$, and
\item[(iii)] $h^* \in F(X)$. 
\end{itemize}
\end{hypothesis}

Our \textbf{final goal} in this section is to prove the following statement: the word $h^*$ given under Hypothesis \ref{this:master:hypothesis}, is equal to the word $w^* = w_1 \cdot e \cdot w_2 \cdot \cdots w_n \cdot e \cdot w_{n+1}$ which is obtained from $h^*$ by all the words $v_1,\dots, v_n$ replaced by $e$. The proof of this statement \emph{may} already be intuitively true for the reader (because the words $v_i$ simply contain a \emph{some} foreign letter from those appearing possibly in the $w_i$). However, in practice, we shall need to perform several formal computations in order to prove that the statement holds. 

The proof is decomposed in several lemmas. We shall start with two lemmas; these two lemmas aim to prove the following: a product $h^*$ of the form \eqref{this:eq101:oct292024} \textbf{cannot possibly} be a word in $F(X)$ if all words $v_i$ (for $i = 1, \dots, n$) are the same. Namely, whenever all instances of $g_0$ in \eqref{this:eq101:oct292024} have the same \emph{sign} (be it positive, or negative). 

The following observation \textbf{shall be used throughout} the proofs: since $\lett(f) \cap X = \emptyset$, the product $g_0 = f \cdot g \cdot f^{-1} \cdot h = f \wprod g \wprod f^{-1} \wprod h$ is in irreducible form. %

In addition, for a given index $j_0 \in \{1,\dots,k\}$ we define $f_{j_0}$ to be the sub-word of $f$ set as \begin{equation*}
f_{j_0} = \prod_{i = j_0}^k y_i.
\end{equation*}

\begin{lemma} \label{this:claim:oct292024}
Under Hypothesis \ref{this:master:hypothesis}, assume there exists some \by $1 \leq N \leq n$ \ey for which \by $v_1 = v_2 = \dots = v_N = g_0$. \ey Then there exists $a \neq e$ with the following properties: \begin{itemize}
\item[(i)] One may represent the product \begin{equation} \label{this:eq10:oct262024}
w_1 \cdot v_1 \cdot w_2 \cdot \cdots \cdot w_{N} \cdot v_N = w'_1 \wprod f' \wprod f_{j_0} \wprod a \wprod f^{-1} \wprod h
\end{equation}
in irreducible form where $w'_1$ and $f'$ are irreducible initial sub-words of $w_1$, and $f$ respectively. 
\item[(ii)] $(g^l \cdot a \cdot g^k) \neq e$ holds for any pair $l,k > 1$; and
\item[(iii)] for any pair $l,k \geq 0$ the word $(g^l \cdot a \cdot g^k)$ starts and ends with the exact same letters as $g$ does. 
\end{itemize}
\end{lemma}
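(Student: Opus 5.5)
The plan is to prove a stronger, explicitly structured statement by induction on $N$, from which (i)–(iii) follow at once. For each $N$ I would show that the product in \eqref{this:eq10:oct262024} has the stated irreducible form with
\begin{equation*}
a = g^{m_0} \wprod t_1 \wprod g^{m_1} \wprod t_2 \wprod \cdots \wprod t_r \wprod g^{m_r},
\end{equation*}
where $r \geq 0$, every $m_s \geq 1$, and each $t_s$ is a non-trivial irreducible word of $F(Y)$ that begins with $y_{j_0}^{-1}$ and ends with $y_{j_0}$. The mechanism is the single observation already used in the text. The words $w_i$ do not contain $y_{j_0}$. Moreover $f = y_1 \wprod \cdots \wprod y_k$ is a product of distinct letters, and $g,h \in F(X)$. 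Hence the only way the letter $y_{j_0}$ of some copy of $f$, or $y_{j_0}^{-1}$ of some $f^{-1}$, can disappear is by cancelling against a $y_{j_0}^{\mp1}$ coming from the neighbouring copy of $g_0$. That cancellation happens only when everything strictly between them collapses to $e$.

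For the base case $N=1$, I would reduce $w_1 \fprod f \fprod g \fprod f^{-1} \fprod h$. The suffix of $w_1$ can cancel at most an initial segment $y_1 \cdots y_{j}$ of $f$ with $j < j_0$, since $y_{j_0} \notin \lett(w_1)$. This gives $w_1 = w_1' \wprod (y_1\cdots y_j)^{-1}$ and $f' = y_{j+1}\cdots y_{j_0-1}$. The remainder $f_{j_0} \wprod g \wprod f^{-1} \wprod h$ is already irreducible because $g \in F(X)\setminus\{e\}$, so $a = g$.

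For the inductive step from $N$ to $N+1$, I would multiply the form for $N$ on the right by $w_{N+1} \fprod f \fprod g \fprod f^{-1} \fprod h$. The new material between the old $a$ and the new $g$ is $c = f^{-1}\fprod h \fprod w_{N+1} \fprod f$.
\begin{itemize}
\item[(1)] If $h \fprod w_{N+1} = e$, then $c = e$ and $a_{N+1} = a_N \fprod g$. This merges into the last $g$-power, so $m_r$ increases by one.
\item[(2)] Otherwise $h\fprod w_{N+1}$ is a non-trivial word without $y_{j_0}$. It can eat only the letters $y_1^{\pm1},\dots,y_{j_0-1}^{\pm1}$ nearest to it, so $c = f_{j_0}^{-1} \wprod c' \wprod f_{j_0}$ for some $c'$, and this $c$ is a new block $t_{r+1}$ with the required first and last letters. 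It sits between $g$-powers and cannot cancel with them, since $g \in F(X)$.
\end{itemize}
In both cases the prefix $w_1' \wprod f' \wprod f_{j_0}$ is untouched, which gives (i).

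For (ii) and (iii), I would write $g = u \wprod g' \wprod u^{-1}$ with $g'$ cyclically reduced and non-trivial. Then for $m \geq 1$ the power $g^m = u \wprod g'^m \wprod u^{-1}$ is reduced and non-trivial, because free groups are torsion-free. It begins and ends with the same letters as $g$. Consequently $g^l \fprod a \fprod g^k$ equals $g^{l+m_0} \wprod t_1 \wprod \cdots \wprod t_r \wprod g^{m_r+k}$, which is irreducible: the $t_s$ begin with $y_{j_0}^{-1}$, end with $y_{j_0}$, and cannot cancel with words of $F(X)$. This word is therefore non-trivial and starts and ends like $g$. It also shows $a \neq e$.

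The main obstacle is case (2) of the inductive step. There one has to check carefully that the partial cancellation of $f^{-1}$ and $f$ against $h\fprod w_{N+1}$ really stops before $y_{j_0}$ on both sides. The delicate situation is when $h\fprod w_{N+1}$ lies in $\grp{y_1,\dots,y_{j_0-1}}$ and could in principle cancel the whole of $y_{j_0-1}^{-1}\cdots y_1^{-1}$ and the matching part of $f$. I would handle this by tracking the reduced form of $y_1\cdots y_{j_0-1}$ against $h\fprod w_{N+1}$ explicitly. The point is that whatever survives sits strictly between $y_{j_0}^{-1}$ and $y_{j_0}$, and the block reduces to $y_{j_0}^{-1}y_{j_0}$ only when $h\fprod w_{N+1}=e$, which is case (1).
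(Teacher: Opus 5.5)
Your proof is correct and has the same skeleton as the paper's: induction on $N$, where the new $a$ is either $a'\fprod g$ or $a'\wprod(\text{block})\wprod g$. What differs is how the inductive step is organized.

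The paper follows the partial cancellations one at a time: first $w_{N+1}\fprod f$, then $h\fprod w'_{N+1}$, then $f^{-1}\fprod w''_{N+1}$. It splits into three cases, with subcases, according to how far $h$, $w'_{N+1}$ and the copies of $f^{\pm1}$ cancel into one another. Its inductive invariant is only properties (ii)--(iii).

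You instead treat all the intermediate material as the single conjugate $c=f^{-1}\fprod z\fprod f$, where $z=h\fprod w_{N+1}$. This reduces everything to the dichotomy $z=e$ versus $z\neq e$. In fact, the paper's subcases that yield $a:=a'\fprod g$ are exactly those with $z=e$. You also carry a stronger invariant, an explicit reduced normal form for $a$, from which $a\neq e$, (ii) and (iii) follow in one line. Your version is shorter and more transparent; the paper's writes out the letters of each block explicitly.

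The step you single out as the main obstacle is in fact immediate. Writing $f=(y_1\cdots y_{j_0-1})\wprod f_{j_0}$ gives $c=f_{j_0}^{-1}\fprod c'\fprod f_{j_0}$ with $c'=(y_1\cdots y_{j_0-1})^{-1}\fprod z\fprod (y_1\cdots y_{j_0-1})$. This $c'$ is a conjugate of $z$, so it is non-trivial exactly when $z\neq e$. It also contains no letter $y_{j_0}$, so $c=f_{j_0}^{-1}\wprod c'\wprod f_{j_0}$ in irreducible form whenever $z\neq e$.

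One small correction: $f_{j_0}^{-1}=y_k^{-1}\cdots y_{j_0}^{-1}$, so your blocks $t_s$ begin with $y_k^{-1}$ and end with $y_k$, not with $y_{j_0}^{-1}$ and $y_{j_0}$. You only use that their first and last letters are of the form $y_i^{\pm1}$ with $y_i\in Y\setminus X$, so nothing else in your argument is affected.
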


\begin{proof}
We prove this by induction on $k$; namely, the length of the sequence in \eqref{this:eq10:oct262024}. If $N=1$ the sequence is of the form $w_1 \cdot v_1 = w_1 \cdot (f \cdot g \cdot f^{-1} \cdot h)$. Since $y_{j_0} \not \in \lett(w_1)$, the product $w_1 \cdot f$, when computed, is of the form \begin{equation}
w_1 \cdot f = w'_1 \wprod f' \wprod f_{j_0} = w'_1 \wprod \prod_{i = j}^{j_0-1} y_i \wprod \prod_{i = j_0}^{k} y_i.
\end{equation}
where $w'_1$ is an irreducible initial sub-word of $w_1$, $f'$ is an irreducible sub-word of $f$ where the integer $j$ satisfies $1 \leq j < j_0 \leq k$. This now implies that \begin{equation} \label{this:eq:feb3:2026}
w_1 \cdot v_1 = w'_1 \wprod f' \wprod f_{j_0} \wprod g \wprod f^{-1} \wprod h
\end{equation}
We then let $a = g$ to complete this case. Assume that the claim holds successfully for some $N \in \{1,\dots,n-1\}$. We now prove it for a sequence $w_1 \cdot v_1 \cdot w_2 \cdot \cdots \cdot v_{N} \cdot w_{N+1} \cdot v_{N+1}$ of length $N+1$. By induction hypothesis we select some $a' \neq e$ such that \begin{equation*}
w_1 \cdot v_1 \cdot w_2 \cdot \cdots \cdot w_{N} \cdot v_N = w'_1 \wprod f' \wprod f_{j_0} \wprod a' \wprod f^{-1} \wprod h
\end{equation*}
is irreducible, and $a'$ satisfies conditions (ii)--(iii) as required in the claim. We first compute $w_{N+1} \cdot f$ in analog fashion to \eqref{this:eq:feb3:2026}: since $y_{j_0} \not \in \lett(w_{N+1})$ we have \begin{equation} \label{thiseq:10:feb3:2026}
w_{N+1} \cdot f = w'_{N+1} \wprod f'' \wprod f_{j_0} = w'_{N+1} \wprod \prod_{i = j'}^{j_0-1} y_i \wprod \prod_{i = j_0}^{k} y_i.
\end{equation}
where $w'_{N+1}$ is an initial sub-word (a prefix) of $w_{N+1}$ and $f'' = \prod_{i = j'}^{j_0 - 1} y_i$ is an irreducible subword of $f$ where the integer $j'$ satisfies $1 \leq j' < j_0 \leq k$. Now \textbf{our goal is to compute the product} \begin{equation} \label{thiseq:11:feb3:2026}
(w'_1 \wprod f' \wprod f_{j_0} \wprod a' \wprod f^{-1} \wprod h) \cdot (w'_{N+1} \wprod f'' \wprod f_{j_0} \wprod g \wprod f^{-1} \wprod h),
\end{equation}
which is exactly $w_1 \cdot v_1 \cdot w_2 \cdot \cdots \cdot v_{N} \cdot w_{N+1} \cdot v_{N+1}$.
The first crucial calculation is $h \cdot (w'_{N+1} \wprod f'' \wprod f_{j_0})$. Since $h \in F(X)$, the product $h \cdot w'_{N+1}$ does \textbf{not} contain the letter $y_{j_0}$ in its irreducible form. In the worst case, $h \cdot w'_{N+1}$ can at most cancel all of $f''$, but never have cancelations with $f_{j_0}$ (which starts with the letter $y_{j_0}$). Therefore the irreducible form of $h \cdot w'_{N+1}$ concatenates with a subword of $f$ as follows: there exists an integer $j''$ satisfying $j' \leq j'' < j_0$ such that \begin{equation} \label{thiseq:20:feb3:2026}
h \cdot (w'_{N+1} \wprod f'' \wprod f_{j_0}) = (h \cdot w'_{N+1}) \wprod \prod_{i = j''}^{j_0-1} y_i \wprod f_{j_0}
\end{equation} 
\begin{case}
Let us first consider the case where the cancellation between $h$ and $w'_{N+1}$ \textbf{does not fully consume $h$} (namely, the irreducible form of $h \cdot w'_{N+1}$ retains an initial sub-word of $h$). 

In this case, \eqref{thiseq:10:feb3:2026} implies that $f^{-1} \cdot (h \cdot w'_{N+1}) = f^{-1} \wprod (h \cdot w'_{N+1})$ (namely, $f^{-1}$ concatenates with the irreducible form of $h \cdot w'_{N+1}$). In this event, \eqref{thiseq:11:feb3:2026} is now computed as \begin{equation*}
w'_1 \wprod f' \wprod f_{j_0} \wprod a' \wprod f^{-1} \wprod (h \cdot w'_{N+1}) \wprod \prod_{i = j''}^{j_0-1} y_i \wprod f_{j_0} \wprod g \wprod f^{-1} \wprod h. 
\end{equation*}
We note that the above still holds even if $h \cdot w'_{N+1} = h$ (i.e, $w'_N$ is the empty word): this is because $h$ contains letters which are only in $X$. Here we now define $a := a' \wprod f^{-1} \wprod (h \cdot w'_{N+1}) \wprod \prod_{i = j''}^{j_0-1} y_i \wprod f_{j_0} \wprod g$ and note that it starts and ends with the exact same letters as $g$ (as it is constructed as direct concatenation, and by our hypothesis on $a'$), ensuring (ii) and (iii) are satisfied. 
%\end{proof}
\end{case} 
In what follows, we assume the opposing case: the cancellation fully consumes $h$. Namely, $h \cdot w'_{N+1}$ is simply a final subword of $w'_{N+1}$ (\textbf{perhaps empty}). \textbf{Let us denote that sub-word as $w''_{N+1} = h \cdot w'_{N+1}$}.

\begin{case} Let us first consider the possibility where $w''_{N+1} = h \cdot w'_{N+1}$ is trivial. In this event, we are now concerned with the product \begin{equation*}
f^{-1} \cdot (h \cdot w'_{N+1}) \cdot \prod_{i = j''}^{j_0-1} y_i \wprod f_{j_0} = f^{-1} \cdot (\prod_{i = j''}^{j_0-1} y_i \wprod f_{j_0})
\end{equation*}
\begin{subcase} If $j'' > 1$, then the above product is simply a concatenation of the form
\begin{equation*}
f^{-1} \cdot (\prod_{i = j''}^{j_0-1} y_i \wprod f_{j_0}) = f^{-1} \wprod (\prod_{i = j''}^{j_0-1} y_i \wprod f_{j_0})
\end{equation*}
given that the letters $y_1$ and $y_{j''}$ would be distinct). We now are able to compute \eqref{thiseq:11:feb3:2026} as \begin{equation*}
w'_1 \wprod f' \wprod f_{j_0} \wprod a' \wprod f^{-1} \wprod \prod_{i = j''}^{j_0-1} y_i \wprod f_{j_0} \wprod g \wprod f^{-1} \wprod h.
\end{equation*}
We now define $a := a' \wprod f^{-1} \wprod \prod_{i = j''}^{j_0-1} y_i \wprod f_{j_0} \wprod g$ to complete this case, wherein $j'' > 1$. 
\end{subcase}

\begin{subcase} If $j'' = 1$ (and recall, $h \cdot w'_{N+1}$ is trivial), then we have further cancelations in \eqref{thiseq:11:feb3:2026}: \begin{equation} \label{thiseq:12:feb3:2026}
(w'_1 \wprod f' \wprod f_{j_0} \wprod a' \wprod f^{-1}) \cdot (h \cdot w'_{N+1}) \cdot (\prod_{i = j''}^{j_0-1} y_i \wprod f_{j_0} \wprod g \wprod f^{-1} \wprod h) = (w'_1 \wprod f' \wprod f_{j_0} \wprod a') \cdot (g \wprod f^{-1} \wprod h)
\end{equation}
By our induction hypothesis, $a' \cdot g$ starts and ends with the same letters as $g$ does, this implies that \begin{equation*}
f_{j_0} \cdot (a' \cdot (g \wprod f^{-1}))= f_{j_0} \wprod (a' \cdot g) \wprod f^{-1}.
\end{equation*}
We combine the above with \eqref{thiseq:12:feb3:2026} to compute \eqref{thiseq:11:feb3:2026} as \begin{equation*}
w'_1 \wprod f' \wprod f_{j_0} \wprod (a' \cdot g) \wprod f^{-1} \wprod h
\end{equation*}
we now define $a := a' \cdot g$ to satisfy conditions (ii)--(iii). \textbf{This covers all possibilites when $w''_{N+1} = h \cdot w'_{N+1}$ is the trivial word}. \end{subcase}
 \end{case}

\begin{case}
If $w''_{N+1} = h \cdot w'_{N+1}$ is non-trivial, we compute $f^{-1} \cdot w''_{N+1}$. Since $y_{j_0} \not \in \lett(w''_{N+1})$, we can find some integer $j'''$ such that \begin{equation} \label{this:eq:13:feb3:2026}
f^{-1} \cdot w''_{N+1} = \prod_{i = k}^{j_0} y^{-1}_i \wprod \prod_{i = j_0-1}^{j'''} y^{-1}_i \wprod w'''_{N+1}
\end{equation}
where $w'''_{N+1}$ is an irreducible final-subword (i.e, a suffix) of $w''_{N+1}$. We are once more concerned with whether $w'''_{N+1}$ is trivial or not. 

\begin{subcase} Assume $w'''_{N+1}$ is the trivial word, and define $f''' = f^{-1} \cdot (h \cdot w'_{N+1}) \cdot (f'' \wprod f_{j_0}) = (f^{-1} \cdot w''_{N+1}) \cdot (f'' \wprod f_{j_0})$. We then compute $f'''$ to note that: \begin{equation} \label{this:ftriprime:feb2026}
 f''' = (\prod_{i = k}^{j_0} y^{-1}_i \wprod \prod_{i = j_0-1}^{j'''} y^{-1}_i) \cdot (f'' \wprod f_{j_0}) = (\prod_{i = k}^{j_0} y^{-1}_i \wprod \prod_{i = j_0-1}^{j'''} y^{-1}_i) \cdot (\prod_{i = j'}^{j_0 - 1} y_i \wprod \prod_{i = j_0}^{k} y^{-1}_i )
\end{equation}
If $j''' \neq j'$, then the above product undergoes no cancelations at all (it is simply concatenated) as the letter $y_{j'''}$ would differ from $y_{j'}$. This now allows us to compute \eqref{thiseq:11:feb3:2026} as: \begin{equation*}
w'_1 \wprod f' \wprod f_{j_0} \wprod a' \wprod f''' \wprod g \wprod f^{-1} \wprod h
\end{equation*}
it now suffices to set $a := a' \wprod f''' \wprod g$ to satisfy conditions (i)---(iii). 
\end{subcase}

If we assume that $j''' = j'$, then $f'''$ becomes the trivial word as per \eqref{this:ftriprime:feb2026}. We then compute \eqref{thiseq:11:feb3:2026} as \begin{equation*}
(w'_1 \wprod f' \wprod f_{j_0} \wprod a') \cdot (g \wprod f^{-1} \wprod h) = w'_1 \wprod f' \wprod f_{j_0} \wprod (a' \fprod g) \wprod f^{-1} \wprod h
\end{equation*}
where the last equality holds since $a' \cdot g$ ends with the same letter as $g$ does (by induction hypothesis). We then set $a:= a' \cdot g$ to complete this case. \textbf{This covers all possibilites when $w'''_{N+1}$ is trivial}.

\begin{subcase}
We now assume that $w'''_{N+1}$ is \textbf{not trivial}. Let us now go back to the construction of $w'''_{N+1}$: which is that of a final-subword of $w''_{N+1}$. For this reason, $w'''_{N+1}$ ends with the same letter as $w''_{N+1}$ does. Now, recall that $w''_{N+1} = h \cdot w'_{N+1}$ itself is a \textbf{non-empty} final subword of $w'_{N+1}$, and therefore $w''_{N+1}$ ends with the same letter that $w'_{N+1}$ does. According to \eqref{thiseq:11:feb3:2026}, the product $w'_{N+1} \wprod f''$ is a true concatenation (with no possible further cancelations), since $w'''_{N+1}$ ends with the same letter as $w'_{N+1}$. Thereby, $w'''_{N+1} \cdot f'' = w'''_{N+1} \wprod f''$ holds. We now we employ  \eqref{this:eq:13:feb3:2026} to deduce \begin{equation*}
f^{-1} \cdot h \cdot w'_{N+1} \cdot f'' = (f^{-1} \cdot w''_{N+1}) \cdot f'' = (\prod_{i = k}^{j_0} y^{-1}_i \wprod \prod_{i = j_0-1}^{j'''} y^{-1}_i \fprod w'''_{N+1}) \cdot f''= \prod_{i = k}^{j'''} y^{-1}_i \wprod w'''_{N+1} \wprod f''.
\end{equation*}
Since $w''_{N+1} = h \cdot w'_{N+1}$, we are now able to compute \eqref{thiseq:11:feb3:2026} as \begin{equation*}
(w'_1 \wprod f' \wprod f_{j_0} \wprod a') \wprod (\prod_{i = k}^{j'''} y^{-1}_i \wprod w'''_{N+1} \wprod f'') \wprod (f_{j_0} \wprod g \wprod f^{-1} \wprod h)
\end{equation*}
The concatenation between the first two terms above holds since $a'$ ends with the same letter $g$ does (which is in the set $X$). We now are able to define $a:= a' \wprod \prod_{i = k}^{j'''} y^{-1}_i \wprod w'''_N \wprod f'' \wprod f_{j_0} \wprod g$ to satisfy conditions (i)---(iii).
\end{subcase}
Our proof is now concluded, as we have covered all possible cases.
\end{case}
\end{proof}

\begin{corollary} \label{this:claim100:oct292024}
Under Hypothesis \ref{this:master:hypothesis} for a Setting \ref{this:master:set}$(X,Y,g,h,k)$, assume there exists some \by $1 \leq N \leq n$ \ey for which \by $v_1 = v_2 = \dots = v_N$. \ey Then, the product $w_1 \cdot v_1 \cdot w_2 \cdot \cdots \cdot w_{N} \cdot v_N \cdot w_{N+1}$ is not a word in $F(X)$. 
\end{corollary}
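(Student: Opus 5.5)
The natural route is to reduce to Lemma \ref{this:claim:oct292024} and handle the two possible signs separately. First suppose $v_1=\dots=v_N=g_0$. Lemma \ref{this:claim:oct292024} then produces some $a\neq e$ with
\begin{equation*}
w_1\cdot v_1\cdots w_N\cdot v_N = w'_1\wprod f'\wprod f_{j_0}\wprod a\wprod f^{-1}\wprod h
\end{equation*}
in irreducible form. The key observation is that the occurrence of the letter $y_{j_0}$ at the start of $f_{j_0}$, immediately followed by the nonempty word $a$, must survive right-multiplication by $w_{N+1}$. I will then argue that the final product still contains $y_{j_0}$, so it is not in $F(X)$.

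To see this, I will compute $(a\wprod f^{-1}\wprod h)\cdot w_{N+1}$. Since $y_{j_0}\notin\lett(w_{N+1})$ and $h\in F(X)$, cancellation from the right can consume $h$ and part of $f^{-1}$. It cannot cancel $y_{j_0}^{-1}$ inside $f^{-1}$, because $w_{N+1}$ has no letter $y_{j_0}^{\pm1}$ to meet it. So the reduced form of the product is
\begin{equation*}
w'_1\wprod f'\wprod f_{j_0}\wprod a\wprod \Big(\prod_{i=k}^{j_0}y_i^{-1}\Big)\wprod u
\end{equation*}
for some $u$. Here I am using that $a$ ends with a letter of $g$, hence a letter of $X$, by Lemma \ref{this:claim:oct292024}(iii) with $l=k=0$; this prevents cancellation between $a$ and $f^{-1}$. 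In particular $y_{j_0}$ appears, so the word is not in $F(X)$. (Strictly, the conclusion holds even without tracking $u$: as long as the prefix $f_{j_0}\wprod a$ is untouched, $y_{j_0}$ survives, and since $a\neq e$ it sits between letters that are not $y_{j_0}^{-1}$.)

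For the case $v_1=\dots=v_N=g_0^{-1}$, I would use inversion rather than redo the computation. The inverse of the product is $w_{N+1}^{-1}\cdot g_0\cdot w_N^{-1}\cdots g_0\cdot w_1^{-1}$, an alternating product of the same shape in which all $v$'s equal $g_0$. The new $w$'s still avoid $y_{j_0}$, so Hypothesis \ref{this:master:hypothesis}(i) holds for them. Lemma \ref{this:claim:oct292024} only used conditions (i) and (ii) of that hypothesis for the initial segment, not (iii), so it applies to $w_{N+1}^{-1}\cdot g_0\cdots w_2^{-1}\cdot g_0$. The positive case then shows that multiplying on the right by $w_1^{-1}$ still leaves a word containing $y_{j_0}$. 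Since $F(X)$ is closed under inverses, the original product is not in $F(X)$ either.

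I expect the main obstacle to be justifying that right-multiplication by $w_{N+1}$ cannot eat back into $a$. The argument relies on the first surviving $y_{j_0}^{-1}$ of $f^{-1}$ acting as a barrier, which needs a clean statement: in a reduced product $p\wprod y^{-1}\wprod q$, multiplying by a word free of $y$ affects only the part after $y^{-1}$. A second point to check is that Lemma \ref{this:claim:oct292024} genuinely does not use (iii) of the hypothesis. If it does, I would restate it for sequences that satisfy only (i) and (ii).
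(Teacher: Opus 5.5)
Your argument is correct. The positive case is the paper's own argument: apply Lemma \ref{this:claim:oct292024}, then note that right multiplication by $w_{N+1}$ can only cancel a suffix of the reduced word that is letter-by-letter inverse to a prefix of $w_{N+1}$. So it can never reach the $y_{j_0}^{-1}$ of $f^{-1}$. This is exactly the clean ``barrier'' statement you ask for, and it is a standard fact about reduced words.

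Where you differ is the negative case. The paper writes $g_0^{-1}=h^{-1}\wprod(f\wprod g^{-1}\wprod f^{-1})$ and absorbs $h^{-1}$ into the preceding blocks via $\hat w_i=w_i\cdot h^{-1}$ for $i\le N$. It then recognizes $f\wprod g^{-1}\wprod f^{-1}$ as the special word of the Setting $(X,Y,g^{-1},e,k)$ and reruns the positive case in that new setting. You instead invert the whole product. This keeps the same setting, the same $g_0$ and the same $j_0$, and uses only $\lett(w^{-1})=\lett(w)$ and the fact that $F(X)$ is closed under inverses. The paper's version avoids reversing the order of the factors, but it has to re-check the lemma's hypotheses for the new parameters ($g^{-1}\neq e$, $h=e$).

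Both routes rely on the point you flag: Lemma \ref{this:claim:oct292024} uses only conditions (i) and (ii) of Hypothesis \ref{this:master:hypothesis}. Its proof indeed never uses (iii). If you want the application to be formally literal, argue by contradiction. If the product lay in $F(X)$, so would its inverse, and the reversed sequence with $n:=N$ would then satisfy all of (i)--(iii).
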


\begin{proof}
Here we consider two cases. 
\begin{case} \label{begin:case1:thufeb5of2026}
If $v_1 = v_2 = \dots = v_N = g_0$.

In this case, we directly apply Lemma \ref{this:claim:oct292024} to calculate \begin{equation*}
w_1 \cdot v_1 \cdot w_2 \cdot \cdots \cdot w_{N} \cdot v_N = w'_1 \wprod f' \wprod f_{j_0} \wprod a \wprod f^{-1} \wprod h
\end{equation*} 
in \textbf{irreducible} form. Since $y_{j_0} \in \lett(f^{-1}) \setminus \lett(w_{N+1})$ (by hypothesis), the product $(h \cdot w_{N+1})$ also does not contain the letter $y_{j_0}$. Thereby, there exists an irreducible final subword (a suffix) $u_{N+1}$ of $h \cdot w_{N+1}$, and an integer $j \in \{j_0, \dots, n\}$ such that \begin{equation}
f^{-1} \cdot (h \cdot w_{N+1}) = f^{-1}_{j_0} \wprod \prod_{i = j}^{j_0-1} y_i \wprod u_{N+1}
\end{equation} 
in irreducible form. We conclude that \begin{equation*}
w_1 \cdot v_1 \cdot w_2 \cdot \cdots \cdot w_{N} \cdot v_N \cdot w_{N+1} = w'_1 \wprod f' \wprod f_{j_0} \wprod a \wprod f^{-1}_{j_0} \wprod \prod_{i = j}^{j_0-1} y_i \wprod u_{N+1}
\end{equation*}
in irreducible form. In particular, $f_{j_0}$ is a full irreducible sub-word above, ensuring that the letter $y_{j_0}$ appears in $w_1 \cdot v_1 \cdot w_2 \cdot \cdots \cdot w_{N} \cdot v_N \cdot w_{N+1}$. We conclude that it is therefore not a word in $F(X)$. 
\end{case}
\begin{case}
If $v_1 = v_2 = \dots = v_N = g^{-1}_0$.
\end{case}
Recall that $g^{-1}_0 = h^{-1} \wprod f \wprod g^{-1} \wprod f^{-1}$ in irreducible form. We are going to re-index words in the following way: \begin{itemize}
\item Let $\hat{g_0} = f \wprod g^{-1} \wprod f^{-1} \wprod e$, 
\item for each $i = 1,\dots,N$ we let $\hat{w}_i = w_i \cdot h^{-1}$, 
\item we let $\hat{w}_{N+1} = w_{N+1}$.
\end{itemize}
We now note that $\hat{g_0}$ is the special word considered in \eqref{this:special:word:feb5:2026} of the Setting \ref{this:master:set}$(X,Y,g^{-1},e,k)$ (where $e$ is the empty word). Since $h \in F(X)$, for each $i = 1,\dots,n+1$ the word $\hat{w}_i$ does not contain the letter $y_{j_0}$. 
We then note the following: $\hat{v_1} = \hat{v_2} = \dots = \hat{v_N} = \hat{g_0}$ and more over, \begin{equation}
\hat{h^*} = \hat{w}_1\fprod \hat{v}_1\fprod \hat{w}_2\fprod \hat{v}_2\fprod\cdots\fprod \hat{w}_N \fprod \hat{v}_N\fprod \hat{w}_{N+1}
\end{equation}
satisfies all conditions of Hypothesis \ref{this:master:hypothesis}. We are now under the hypotheses of Lemma \ref{this:claim:oct292024}, and then of the hypotheses of Case \ref{begin:case1:thufeb5of2026} as well. We proceed exactly as in this case to deduce that $y_{j_0}$ is a letter which appears in the product \begin{equation*}
w_1 \cdot v_1 \cdot w_2 \cdot \cdots \cdot w_{N} \cdot v_N \cdot w_{N+1} = \hat{w}_1 \cdot \hat{v}_1 \cdot \hat{w}_2 \cdot \cdots \cdot \hat{w}_{N} \cdot \hat{v}_N \cdot \hat{w}_{N+1}.
\end{equation*}
In particular, this word cannot be in $F(X)$. This concludes the proof of this corollary.
\end{proof}
\ey

\by
\begin{lemma} \label{this:new2024:cancelation:lemma}
Assume Hypothesis \ref{this:master:hypothesis} for a Setting \ref{this:master:set}$(X,Y,g,h,k)$. Then, $h^* = w^*$ where \begin{equation} \label{this:eq100:oct292024}
w^* = w_1 \cdot e \cdot w_2 \cdot \cdots w_n \cdot e \cdot w_{n+1}
\end{equation}
is the word obtained from $h^*$ \eqref{this:eq101:oct292024} with all the words $v_1,\dots, v_n$ replaced by $e$ (the empty word).
\end{lemma}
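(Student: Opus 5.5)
The plan is to argue by induction on $n$, the number of factors $v_i\in\{g_0,g_0^{-1}\}$ in \eqref{this:eq101:oct292024}. We remove a suitable adjacent pair $v_i\fprod w_{i+1}\fprod v_{i+1}$ that collapses to the identity, and check that the shortened product again satisfies Hypothesis \ref{this:master:hypothesis}. The case $n=0$ is trivial, since then $h^*=w_1=w^*$. If $n=1$, Corollary \ref{this:claim100:oct292024} applied with $N=1$ shows that $w_1\fprod v_1\fprod w_2\notin F(X)$, contradicting (iii). So $n=1$ cannot occur, and the same reasoning rules out $n\ge 2$ with all the $v_i$ equal.

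\textbf{Tracking the foreign letter.} By (i) the letter $y_{j_0}$ occurs only inside the $v_i$. Each $g_0=f\wprod g\wprod f^{-1}\wprod h$ contains exactly one $y_{j_0}$ (inside $f$) and one $y_{j_0}^{-1}$ (inside $f^{-1}$), and $g_0^{-1}$ likewise. By (iii) the reduced form of $h^*$ contains no $y_{j_0}$. I would compute $h^*$ by reducing the concatenation of the reduced forms of the factors, and record which occurrence of $y_{j_0}^{\pm1}$ cancels against which. These cancellations give a non-crossing matching, and every occurrence is matched. An innermost matched pair must cancel against each other, so the subword strictly between them reduces to $e$.

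\textbf{Locating the pair.} Inside a single $v_i$ the two occurrences are separated by $f_{j_0}$-material and $g\neq e$, so they cannot cancel each other. Hence an innermost pair joins occurrences in two different factors $v_i$ and $v_{i'}$. By innermost-ness (together with the first step) no factor $v$ lies strictly between them, so $i'=i+1$, the pair matches the last $y_{j_0}$-occurrence of $v_i$ with the first one of $v_{i+1}$, and the occurrences involved are $y_{j_0}^{-1}$ followed by $y_{j_0}$. I then check the four sign patterns directly.
\begin{itemize}
\item For $v_i=v_{i+1}=g_0$, the middle segment is $(y_{j_0-1}^{-1}\cdots y_1^{-1})\fprod h\fprod w_{i+1}\fprod(y_1\cdots y_{j_0-1})$, where $y_{j_0-1}^{-1}\cdots y_1^{-1}$ is the tail of $f^{-1}$ after $y_{j_0}^{-1}$ and $y_1\cdots y_{j_0-1}$ is the head of $f$ before $y_{j_0}$. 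If this segment is $e$ then $h w_{i+1}=e$, and the same-sign computation of Lemma \ref{this:claim:oct292024}, Subcase $j''=1$, shows the pair does not actually cancel there: $a'\fprod g$ stays nontrivial by (ii)--(iii). The same holds for two copies of $g_0^{-1}$ after the reindexing used in Corollary \ref{this:claim100:oct292024}.
\item For $g_0\fprod w\fprod g_0^{-1}=f g f^{-1} h w h^{-1} f g^{-1}f^{-1}$, the middle segment is $(y_{j_0-1}^{-1}\cdots y_1^{-1})\,h w h^{-1}\,(y_1\cdots y_{j_0-1})$. It is trivial iff $hwh^{-1}=e$, i.e.\ $w_{i+1}=e$, and then $v_i\fprod w_{i+1}\fprod v_{i+1}=e$.
\item For $g_0^{-1}\fprod w\fprod g_0=h^{-1}fg^{-1}f^{-1}\,w\,fgf^{-1}h$, the middle segment is again a conjugate of $w$ by $y_1\cdots y_{j_0-1}$. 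So $w_{i+1}=e$, and the triple equals $h^{-1}fg^{-1}gf^{-1}h=e$.
\end{itemize}

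\textbf{Induction step and main obstacle.} In the surviving cases we replace $w_i\fprod v_i\fprod w_{i+1}\fprod v_{i+1}\fprod w_{i+2}$ by the single word $w_i\fprod w_{i+2}$. This word avoids $y_{j_0}$ by Lemma \ref{letters:and:products}. The new product has $n-2$ factors, equals $h^*$ (so (iii) holds), and satisfies (i)--(ii). By induction $h^*$ equals the product of its $w$'s, and this is $w^*$ because the discarded $w_{i+1}$ is $e$.

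The step I expect to be the main obstacle is making the non-crossing matching rigorous. One must show that cancellation in a free group of a concatenation of reduced blocks really pairs letter occurrences in a nested way, so that an innermost pair bounds a segment reducing to $e$. One must also exclude the same-sign pairing cleanly. For the latter I would lean on Lemma \ref{this:claim:oct292024} applied to a maximal same-sign run, rather than redo the letter bookkeeping.
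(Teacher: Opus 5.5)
\textbf{The gap is in the same-sign case.} Your claim that an innermost pair joining two copies of $g_0$ (or two copies of $g_0^{-1}$) ``does not actually cancel'' is false. Subcase $j''=1$ of Lemma~\ref{this:claim:oct292024} in fact shows the opposite. There, the $f^{-1}$ of one factor and the $f$ of the next cancel completely, including the $y_{j_0}^{-1}$ and $y_{j_0}$ you are tracking. What survives is the \emph{outer} pair, separated by $a'\cdot g$.

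Concretely, take $h=e$, all $w_i=e$, and $(v_1,v_2,v_3,v_4)=(g_0,g_0,g_0^{-1},g_0^{-1})$. Then $h^*=e\in F(X)$. The last $y_{j_0}$-occurrence of $v_1$ and the first one of $v_2$ are consecutive, with middle segment $c^{-1}c=e$ where $c=y_1\cdots y_{j_0-1}$. So they form an innermost matched pair, and they do cancel, since $v_1v_2=fg^2f^{-1}$. If this is the pair you pick, you get neither a contradiction nor a removable triple with $v_iw_{i+1}v_{i+1}=e$, so the induction step stalls. Corollary~\ref{this:claim100:oct292024} only excludes the case where \emph{all} the $v_i$ coincide, and it says nothing about mixed sequences like this one.

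\textbf{What is missing.} You need to show that for $n\ge 1$ some consecutive pair of \emph{opposite} signs with $w_{i+1}=e$ exists. Call $v_i,v_{i+1}$ \emph{linked} if the segment between their facing $y_{j_0}$-occurrences reduces to $e$. By your computations this means $w_{i+1}=h^{-1}$, $h$, $e$, $e$ for the patterns $(g_0,g_0)$, $(g_0^{-1},g_0^{-1})$, $(g_0,g_0^{-1})$, $(g_0^{-1},g_0)$ respectively.
\begin{itemize}
\item Suppose no linked pair had opposite signs. Then every maximal linked run is single-signed and multiplies out to $fg^{r}f^{-1}h$ or $h^{-1}fg^{-r}f^{-1}$ with $r\ge 1$ (compare Lemma~\ref{this:claim:oct292024}).
\item Concatenate these blocks with the remaining $w$'s. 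Any two consecutive $y_{j_0}$-occurrences are then separated either by $y_{j_0+1}\cdots y_k\, g^{\pm r}\, y_k^{-1}\cdots y_{j_0+1}^{-1}\neq e$ or by an unlinked junction.
\item By your own innermost-pair argument, no $y_{j_0}$ can then cancel, which contradicts (iii).
\end{itemize}
So a linked opposite-sign pair exists. It satisfies $w_{i+1}=e$ and $v_iw_{i+1}v_{i+1}=e$, and your removal step then goes through.

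There is also a shortcut that avoids all the bookkeeping. Put $Z=F(Y\setminus\{y_{j_0}\})$ and write $g_0=(cy_{j_0})\,m\,(cy_{j_0})^{-1}h$ with $m=y_{j_0+1}\cdots y_k\,g\,y_k^{-1}\cdots y_{j_0+1}^{-1}\neq e$. From this, $Z$ and $g_0$ freely generate $Z*\langle g_0\rangle$ inside $F(Y)$. The retraction onto $Z$ killing $g_0$ then sends $h^*$ to $w^*$, while fixing $h^*\in F(X)\subseteq Z$.
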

\ey
\begin{proof}

The proof shall be done by contradiction, using the principle of minimal counterexample. Namely, assume there exists a word $h^* = w_1\fprod v_1\fprod w_2\fprod v_2\fprod\cdots\fprod w_n\fprod v_n\fprod w_{n+1}$  satisfying conditions \by (i)--(iii) of Hypothesis \ref{this:master:hypothesis}, \ey and for which $h^* \neq w^*$ where $w^*$ is as in \eqref{this:eq100:oct292024}. Assume then, that the counterexample $h^*$ is of the minimal possible $n \in \N$ for the indices above. Since $h^* \in F(X)$, the word $h^*$ cannot contain any of the letters embedded in $f$ in its irreducible form (as $\lett(f) \subseteq Y \setminus X$). This implies that the instance of $f$ in the irreducible form of $v_1$ must cancel as a result of the computation of the product \eqref{this:eq101:oct292024}. Without loss of generality, let us assume that $v_1 = g_0$. \by By Corollary \ref{this:claim100:oct292024}, it is \ey impossible for the equality $v_1 = v_2 = \dots = v_n$ to hold. This implies that the instance of $f$ in $v_1$ may only be canceled by some existing $v_j$ in the family $\{v_2,\dots,v_n\}$ for which $v_j = g^{-1}_0$; let us fix this word $v_j$. Let us now expand $v_1$ and $v_j$ in the product from \eqref{this:eq101:oct292024} up to the $j$-th term, \by and denote the resulting word by $h^*_j$: \ey
\begin{equation*}
h^*_j := w_1\fprod (f \wprod g \wprod f^{-1} \wprod h) \fprod w_2\fprod v_2\fprod\cdots\fprod w_j \fprod (h^{-1} \wprod f \wprod g^{-1} \wprod f^{-1})
\end{equation*}
and let us now expand $f$ and $f^{-1}$ in their full irreducible representations:
\begin{equation} \label{this:eq10:oct30:2024}
w_1\fprod (\prod_{i=1}^k y_i \wprod g \wprod \prod_{i=k}^1 y^{-1}_i \wprod h) \fprod w_2\fprod v_2\fprod\cdots\fprod w_j \fprod (h^{-1} \wprod \prod_{i=1}^k y_i \wprod g^{-1} \wprod \prod_{i=k}^1 y^{-1}_i )
\end{equation}
Since the instance of $f$ in $v_1$ must cancel with the instance of $f^{-1}$ in $v_j$, in particular the letter $y_1$ itself may only be canceled by the letter $y^{-1}_i$ appearing right-most in \eqref{this:eq10:oct30:2024}. \by This implies that, in order for the left-most letter $y_1$ of $v_1$ to meet and cancel with the right-most letter $y^{-1}_1$ of $v_j$, the entire intermediate product trapped between them must completely collapse to the empty word. \ey One necessarily has
\begin{equation} \label{this:eq:121:oct302024}
(\prod_{i=1}^k y_i \wprod g \wprod \prod_{i=k}^1 y^{-1}_i \wprod h) \fprod w_2\fprod v_2\fprod\cdots\fprod w_j \fprod (h^{-1} \wprod \prod_{i=1}^k y_i \wprod g^{-1} \wprod \prod_{i=k}^1 y^{-1}_i ) = y_1 \cdot e \cdot y^{-1}_1 = e.
\end{equation}
We now have that, in fact $h^*_j = w_1 \cdot e = w_1$. Let us now use this equality to simplify $h^*$: 
\begin{equation} \label{this:eq5:oct302024}
h^* = h^*_j \cdot w_{j+1} \cdot v_{j+1} \cdots w_n \cdot v_n \cdot w_{n+1} = (w_1 \cdot w_{j+1}) \cdot v_{j+1} \cdots w_n \cdot v_n \cdot w_{n+1}. 
\end{equation}
We now re-index the terms in \eqref{this:eq5:oct302024} as follows: set $w'_1 = w_1 \cdot w_{j+1}$, $w'_{n+1-j} = w_{n+1}$, $v'_1 = v_{j+1}$ and for each $i \in \{2,\dots,n-j\}$, set $v'_i = v_{j+i}$ and $w'_i = w_{j+1}$. We now rewrite the product in \eqref{this:eq5:oct302024} as \begin{equation} \label{this:eq:11:oct30:2024}
h^* = w'_1 \cdot v'_1 \cdot w'_2 \cdot v'_2 \cdot \cdots \cdot w'_{n-j} \cdot v'_{n-j} \cdot w'_{n+1-j}.
\end{equation}
\by We now note the following: $y_{j_0} \not \in \lett(w'_1) = \lett(w_1 \cdot w_{j+1})$ holds, as neither $w_1$ nor $w_{j+1}$ contain $y_{j_0}$ as a letter of their irreducible forms. \ey Next, for each $i = 1,\dots,n-j$ the word $v'_{i} = v_{j+1}$ is an instance of either $g_0$ or $g^{-1}_0$, and $h^* \in F(X)$ holds by our original assumptions. Since $n-j < n$, it suffices to prove that \eqref{this:eq:11:oct30:2024} is a counter-example to the conclusion of our lemma to arrive to a contradiction. \begin{claim}
Consider the word $$v^* = w'_1 \cdot e \cdot w'_2 \cdot \cdots \cdot w'_{n-j} \cdot e \cdot w'_{n+1-j}$$
obtained from  \eqref{this:eq:11:oct30:2024} by replacing all instances of the $v'_j$'s with $e$. Then, $h^* = v^* = w^*$.
\end{claim}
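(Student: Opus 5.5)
The plan is to apply the minimality of the counterexample $n$ to the shortened product \eqref{this:eq:11:oct30:2024}, and then to identify $v^*$ with $w^*$ using the collapse identity \eqref{this:eq:121:oct302024}.

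First, I will check that \eqref{this:eq:11:oct30:2024} satisfies conditions (i)--(iii) of Hypothesis \ref{this:master:hypothesis} for the same Setting \ref{this:master:set}$(X,Y,g,h,k)$. Here the re-indexing should be read as $w'_i = w_{j+i}$ and $v'_i = v_{j+i}$.
\begin{itemize}
\item[(i)] The letter $y_{j_0}$ lies in none of the $w'_i$. For $i \geq 2$ this is immediate, since each $w'_i$ is some $w_{j+i}$. For $w'_1 = w_1 \cdot w_{j+1}$ it follows from Lemma \ref{letters:and:products}(i).
\item[(ii)] Each $v'_i$ belongs to $\{g_0, g_0^{-1}\}$.
\item[(iii)] The product \eqref{this:eq:11:oct30:2024} equals $h^*$, which lies in $F(X)$.
\end{itemize}
The new product has $n-j < n$ occurrences of $g_0^{\pm 1}$, because $j \geq 2$. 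By minimality of the counterexample $n$, it therefore satisfies the conclusion of Lemma \ref{this:new2024:cancelation:lemma}, so $h^* = v^*$. (The degenerate case $j = n$ is also covered: then $h^* = w_1 \cdot w_{n+1} = v^*$ directly.)

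Second, I must show $v^* = w^*$, that is,
\[
w_1 \cdot w_{j+1} \cdots w_{n+1} = w_1 \cdot w_2 \cdots w_{n+1}.
\]
This reduces to proving $w_2 \cdot w_3 \cdots w_j = e$. This is the main obstacle.

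Identity \eqref{this:eq:121:oct302024} says that the inner product $P = v_1 \cdot w_2 \cdot v_2 \cdots w_j \cdot v_j$ is trivial. My approach is to conjugate this away so that the inner segment becomes a shorter instance of the Hypothesis, and then apply minimality again.
\begin{enumerate}
\item Since $P = e$, we have $w_2 \cdot v_2 \cdots v_{j-1} \cdot w_j = v_1^{-1} \cdot v_j^{-1}$.
\item With $v_1 = g_0$ and $v_j = g_0^{-1}$, this gives $w_2 \cdot v_2 \cdots w_j = g_0^{-1} \cdot g_0 = e$.
\item The left side is a product of the Hypothesis form with $j-2 < n$ occurrences of $g_0^{\pm 1}$. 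Condition (i) holds for it, since its $w$'s are among the original ones, and condition (iii) holds trivially, since it equals $e \in F(X)$.
\item Minimality then gives $w_2 \cdot e \cdots e \cdot w_j = e$, that is, $w_2 \cdots w_j = e$.
\end{enumerate}

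Substituting $w_2 \cdots w_j = e$ into $w^*$ yields $w^* = w_1 \cdot w_{j+1} \cdots w_{n+1} = v^*$. Hence $h^* = v^* = w^*$, which proves the claim. It then contradicts the choice of $h^*$ as a counterexample.

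The delicate point is the use of \eqref{this:eq:121:oct302024}. As written, that identity was justified by a letter-tracking argument about $y_1$ meeting $y_1^{-1}$, and I would not rely on it in that form. Instead I would re-derive $P = e$ from the cancellation of the outer $y_1 \cdots y_1^{-1}$ pair: if $y_1 \cdot Q \cdot y_1^{-1}$ cancels its end letters, then $Q = e$.

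If that derivation fails, my fallback is to pick $j$ as the index matched to $v_1$ by the first-cancelling pair, chosen innermost. Then the segment between $v_1$ and $v_j$ contains no unmatched $f$'s, and its triviality follows by induction from Corollary \ref{this:claim100:oct292024}.
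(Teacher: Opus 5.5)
Your proof is correct. Its first half, $h^*=v^*$ by minimality applied to \eqref{this:eq:11:oct30:2024} with $n-j<n$ occurrences, is exactly the paper's argument. The second half takes a different route from the paper.

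\textbf{The paper's route.} It feeds the whole segment $e\cdot v_1\cdot w_2\cdots w_j\cdot v_j\cdot e$, which equals $e$ by \eqref{this:eq:121:oct302024}, back into the lemma. That segment still contains $j$ occurrences of $g_0^{\pm1}$, so minimality is available only when $j<n$. The paper simply writes ``Since $j<n$'', although nothing in the preceding argument excludes $j=n$.

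\textbf{Your route.} You use $v_1=g_0=v_j^{-1}$ to strip both ends, obtaining $w_2\cdot v_2\cdots v_{j-1}\cdot w_j=g_0^{-1}\cdot g_0=e$ with only $j-2$ occurrences. Minimality then applies for every $j\le n$, including $j=n$; for $j=2$ it gives $w_2=e$ directly. So your argument uses the same idea as the paper's but is strictly more robust, since it closes the $j=n$ case the paper's text leaves open.

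\textbf{Caveat on your closing paragraph.} It is unnecessary for the Claim. Like the paper, you may simply take $P=e$ from the surrounding context. The paragraph also does not actually re-derive that identity. Saying ``the outer $y_1$ of $v_1$ cancels the outer $y_1^{-1}$ of $v_j$'' assumes the very letter-pairing that is in question; note that $w_1$ may itself contain $y_1^{\pm1}$. Your fallback argument is only a sketch. So do not present that paragraph as a justification of \eqref{this:eq:121:oct302024}.
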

\begin{proof}
\by 
The equality $h^* = v^*$ holds by the following: since $n - j < n$, then the sequence as written in \eqref{this:eq5:oct302024} is not a counter-example to our lemma, so we may apply it. An application of that lemma implies that $h^* = v^*$, where $v^*$ is simply the expression in \eqref{this:eq:11:oct30:2024} by replacing all $v'_i$ by the empty word $e$. Let us now prove that $v^* = w^*$.
\ey

Re-expand the definition of the $w'_j$'s as follows: \begin{equation} \label{this:eq90:oct302024} v^* = (w_1 \cdot e \cdot w_{j+1}) \cdot e \cdot w_{j+2} \cdot e \cdots \cdot w_{n} \cdot e \cdot w_{n+1}.
\end{equation}
For the $e$ term in-between $w_1$ and $w_{j+1}$ we now expand it as in \eqref{this:eq:121:oct302024} to deduce that \by
\begin{equation} \label{this:eq1:feb6:2026}
v_1 \cdot w_2 \cdot v_2 \cdot \cdots \cdot w_j \cdot v_j = e.
\end{equation}
Since $j < n$, our assumption allows us to apply the our lemma to the above sequence as it cannot be a counterexample (due to minimality). To apply it, we define $\hat{w}_1 = e$, and set $\hat{w}_i = w_i$ and $\hat{v}_i = v_i$ for all $i = 1,\dots,j$. From \eqref{this:eq1:feb6:2026} we have \begin{equation}
e = v_1 \cdot w_2 \cdot v_2 \cdot \cdots \cdot w_j \cdot v_j  = \hat{w}_1 \cdot \hat{v}_1 \cdot \hat{w}_2 \cdot \hat{v}_2 \cdot \cdots \cdot \hat{w}_j \cdot \hat{v}_j.
\end{equation}
Since $e \in F(X)$, and the above sequence satisfies conditions (i)-(iii) of our lemma. The minimality of $n$ and $j < n$ allows us to conclude that 
\begin{equation*}
\hat{w}_1 \cdot \hat{v}_1 \cdot \hat{w}_2 \cdot \hat{v}_2 \cdot \cdots \cdot \hat{w}_j \cdot \hat{v}_j = \hat{w}_1 \cdot e \cdot \hat{w}_2 \cdot e \cdot \cdots \cdot \hat{w}_{j+1} 
\end{equation*} 
where all $\hat{v}_i$'s have been replaced by the empty word. Since $\hat{w}_i = w_i$ holds for all $i=1,\dots,j$ we actually proved that \begin{equation} \label{this:eq2:feb6:2026}
e = e \cdot v_1 \cdot w_2 \cdot v_2 \cdot \cdots \cdot w_j \cdot v_j = w_2 \cdot e \cdot w_3 \cdot e \cdot \cdots \cdot w_{j-1} \cdot e \cdot w_j \cdot e.
\end{equation}
We now replace \eqref{this:eq2:feb6:2026} in the first $e$-term of \eqref{this:eq90:oct302024} to see that \begin{equation*}
v^* = (w_1 \cdot (e \cdot w_2 \cdot e \cdot \cdots \cdot w_j \cdot e) \cdot w_{j+1}) \cdot e \cdot w_{j+2} \cdot e \cdots \cdot w_{n} \cdot e \cdot w_{n+1}
\end{equation*}
This now proves that $v^* = w^*$ as per \eqref{this:eq100:oct292024}. 
\ey
\end{proof}
\by
By our previous claim we now have that $h^* = v^* = w^*$. However, this is a contradiction: since we assumed we had a counter-example to our lemma, then $h^* \neq w^*$ was supposed to hold. 
We conclude that no such counter-example exists, proving our lemma.
\ey
\end{proof}

\by
\begin{corollary}
\label{workingman:lemma:7.2}
Take a Setting \ref{this:master:set} $(X,Y,g,h,k)$. Assume we are given a sequence $a_1,a_2,\dots,a_m\in F(Y)$ such that $a_1\fprod a_2\fprod\dots\fprod a_m \in F(X)$ holds. If for each $l = 1,\dots,m$ the inclusion $y_{j_0} \in \lett(a_l)$ holds if and only if $a_l \in \grp{g_0} \setminus \{e\}$, then
\begin{equation}
\label{eq:33:z}
a_1\fprod a_2\fprod\dots\fprod a_m
=
\eta(a_1)\fprod \eta(a_2)\fprod\dots\fprod \eta(a_m).
\end{equation}  
\end{corollary}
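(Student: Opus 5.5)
The plan is to reduce the statement directly to Lemma \ref{this:new2024:cancelation:lemma}. Here I read $\eta$ as the map that sends every element of $\grp{g_0}\setminus\{e\}$ to $e$ and fixes everything else, in the spirit of Lemma \ref{reduction:lemma}, and I take $y_{j_0}\in Y\setminus X$ as in Hypothesis \ref{this:master:hypothesis}. The task is then to recast the product $a_1\fprod a_2\fprod\dots\fprod a_m$ in the shape \eqref{this:eq101:oct292024} and check conditions (i)--(iii) of Hypothesis \ref{this:master:hypothesis}.

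First, split the indices $\{1,\dots,m\}$ into those with $a_l\in\grp{g_0}\setminus\{e\}$ (the \emph{special} indices) and the rest. If there are no special indices, then $\eta(a_l)=a_l$ for every $l$ and \eqref{eq:33:z} is trivial. Otherwise, write each special term as $a_l=g_0^{t_l}$ with $t_l\neq 0$, and expand it as a product of $|t_l|$ copies of $g_0^{\operatorname{sgn}(t_l)}$, with empty words $e$ inserted between consecutive copies. Each maximal block of consecutive non-special terms $a_r\fprod\cdots\fprod a_s$ becomes a single $w_i$. An empty block becomes $w_i=e$; this includes the block before the first special term and the block after the last one. This gives
\[
a_1\fprod\cdots\fprod a_m=w_1\fprod v_1\fprod w_2\fprod\cdots\fprod w_N\fprod v_N\fprod w_{N+1},
\qquad v_i\in\{g_0,g_0^{-1}\}.
\]

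Next, verify the hypotheses. Condition (ii) holds by construction. Condition (iii) holds because the product lies in $F(X)$ by assumption. For condition (i), each $w_i$ is either $e$ or a product of non-special $a_l$'s. By the \emph{if and only if} assumption, every non-special $a_l$ satisfies $y_{j_0}\notin\lett(a_l)$. Lemma \ref{letters:and:products}(ii) then gives $y_{j_0}\notin\lett(w_i)$.

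Lemma \ref{this:new2024:cancelation:lemma} now says the product equals $w_1\fprod e\fprod w_2\fprod\cdots\fprod e\fprod w_{N+1}$. Reassembling the blocks, this is exactly $\eta(a_1)\fprod\cdots\fprod\eta(a_m)$: each special $a_l$ is replaced by $e=\eta(a_l)$, and each non-special $a_l=\eta(a_l)$ sits in its original order inside its block.

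I expect no real obstacle beyond bookkeeping. The one point needing care is the indexing, namely handling empty blocks and splitting a power $g_0^{t}$ into single factors with $e$ between them, so that the expression fits the exact alternating form \eqref{this:eq101:oct292024}.
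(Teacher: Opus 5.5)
Your proposal is correct and follows the paper's proof essentially step for step. The paper likewise splits off the indices with $a_l\in\grp{g_0}\setminus\{e\}$ and expands each $g_0^{q_l}$ into $|q_l|$ copies of $g_0^{\pm1}$ separated by $e$. It then groups the remaining factors into the words $w_i$, using the ``if and only if'' hypothesis and Lemma \ref{letters:and:products} to get $y_{j_0}\notin\lett(w_i)$, and applies Lemma \ref{this:new2024:cancelation:lemma}; the only difference is that the paper does the relabelling by an explicit induction on $|L|$ (Claim \ref{this:key:relabling:claim:feb82026}), whereas you do it directly.
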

\ey

\begin{proof}
Define the set
\begin{equation}
\label{eq:def:L}
L=\{l=1,\dots,m: a_l \in \grp{g_0} \setminus \{e\}\}.
\end{equation}
If $L = \emptyset$ holds, then for each $l = 1,\dots,m$ we have $a_l = \eta(a_l)$ by the definition of $\eta$, allowing us to deduce \eqref{eq:33:z}. \by From now on, we always assume that $L \neq \emptyset$. \ey Note the following: for any $l \in L$ the inclusion  $a_l \in \grp{g_0}$ holds, therefore, there exists some $q_l \in \Z$ such that $a_l = g_0^{q_l}$. Let $\delta_l = q_l / |q_l| \in \{-1,1\}$ be the sign of $q_l$ \by (naturally, here $|q_l|$ is the absolute value of $q_l$). \ey Let us rewrite $a_l$ as the following product: \by \begin{equation} \label{this:eq:oct312024}
a_l = g_0^{q_l} = \prod_{j = 1}^{|q_l|} (g_0^{\delta_l} \cdot e) = g_0^{\delta_l} \cdot e  \cdot g_0^{\delta_l} \cdot \cdots \cdot e \cdot g_0^{\delta_l} \cdot e,
\end{equation}
\ey
\by where $g_0^{\delta_l}$ appears $|q_l|$-many times. \ey We now apply the map $\eta$ \by term-by-term to \ey both sides of the above to note \begin{equation} \label{this:eq2:oct312024}
e = \eta(a_l) = \prod_{j = 1}^{|q_l|} (\eta(g_0^{\delta_l}) \cdot \eta(e)) = \eta(g_0^{\delta_l}) \cdot \eta(e)  \cdot \eta(g_0^{\delta_l}) \cdot \cdots \cdot \eta(e) \cdot \eta(g_0^{\delta_l}) \cdot \eta(e).
\end{equation}

Let us take an increasing indexing $L = \{t_i :  1 \leq i \leq N\}$ of the set $L$ where $N \in \N$. 

\by
Our proof will proceed as follows: we shall group the sequence into ``blocks'' separated by the non-trivial powers of $g_0$. By substituting these powers with their explicit representations (as sequences of $g_0^{\pm 1}$ interspersed with $e$), we can align the product into a format that satisfies Hypothesis \ref{this:master:hypothesis}.
\begin{claim} \label{this:key:relabling:claim:feb82026}
The product $a_1\fprod a_2\fprod\dots\fprod a_{t_N}$ can be re-written as a new product of the form $w_1 \fprod v_1 \fprod w_2 \fprod v_2 \fprod \cdots \fprod w_n \fprod v_n$ of words in $F(Y)$ (for some $n \in \N$) with the following properties: \begin{itemize}
\item[(i)] $y_{j_0} \not \in \lett(w_i)$ for all $i \in \{1, \dots, n+1\}$,
\item[(ii)] $v_i \in \{g_0,g^{-1}_0\}$ for each $i \in \{1, \dots,n\}$; and
\item[(iii)] the equalities \begin{equation} \label{this:master:product:eq:feb8:2026}
\prod_{t \leq t_N} a_t = w_1 \fprod \left( \prod_{0<i<n} v_i \cdot w_{i+1} \right) \cdot v_{n}\mbox{; and}
\end{equation}
\begin{equation}
\prod_{t \leq t_N} \eta(a_t) = w_1 \fprod \left( \prod_{0<i<n} e \cdot w_{i+1} \right) \cdot e. 
\end{equation}
hold.
\end{itemize}
\end{claim}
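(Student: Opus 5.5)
The plan is to build the words $w_i$ and $v_i$ explicitly from the given sequence $a_1,\dots,a_{t_N}$, using the increasing indexing $L=\{t_1<\dots<t_N\}$ of the positions where a non-trivial power of $g_0$ occurs. We group the $a_t$ with $t\notin L$ into maximal blocks separated by the elements of $L$. Set
\[
b_1=a_1\fprod\cdots\fprod a_{t_1-1},\qquad b_{i+1}=a_{t_i+1}\fprod\cdots\fprod a_{t_{i+1}-1}\quad (1\le i<N),
\]
with the convention that an empty product is $e$. For $t\notin L$, the hypothesis of Corollary \ref{workingman:lemma:7.2} gives $y_{j_0}\notin\lett(a_t)$. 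Lemma \ref{letters:and:products}(ii) then yields $y_{j_0}\notin\lett(b_i)$ for every $i$.

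Next, each $a_{t_i}=g_0^{q_{t_i}}$ is replaced by its expansion \eqref{this:eq:oct312024}, namely $|q_{t_i}|$ copies of $g_0^{\delta_{t_i}}$ interleaved with $e$'s. Substituting these expansions turns $\prod_{t\le t_N}a_t$ into an alternating product. Its first $w$-slot is $b_1$. The $|q_{t_1}|$ factors $g_0^{\delta_{t_1}}$ follow, separated by $w$-slots equal to $e$. After the last factor coming from $a_{t_1}$, the next $w$-slot is the trailing $e$ of that expansion times $b_2$, i.e.\ $b_2$. This pattern continues up to $a_{t_N}$, whose final $g_0^{\delta_{t_N}}$ is the last $v_n$. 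Here $n=\sum_{i\le N}|q_{t_i}|$, every $v_i\in\{g_0,g_0^{-1}\}$, and every $w_i$ is either $e$ or some $b_j$. This gives (i) and (ii). Taking $w_{n+1}=e$ covers the index $n+1$ in (i).

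For (iii), the first equality holds by construction. The second follows by applying $\eta$ termwise. For $t\notin L$ we have $\eta(a_t)=a_t$, so each block $b_j$ is unchanged. For $t\in L$ we have $\eta(a_t)=e$. By \eqref{this:eq2:oct312024}, this equals the product obtained from the expansion with each $g_0^{\delta_{t}}$ replaced by $e$ (using $\eta(e)=e$). Replacing every $v_i$ by $e$ while keeping the $w_i$ therefore gives exactly $\prod_{t\le t_N}\eta(a_t)$.

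The only delicate point is the bookkeeping at the boundaries: when a block $b_j$ is empty, i.e.\ two elements of $L$ are consecutive, and when $t_1=1$. In both cases the corresponding $w$-slot is simply $e$, which still satisfies (i). I expect no genuine obstacle beyond writing this indexing carefully. The later use of the claim is to apply Lemma \ref{this:new2024:cancelation:lemma}, with the tail $a_{t_N+1}\cdots a_m$ absorbed into $w_{n+1}$; this is why (i) is stated for indices up to $n+1$.
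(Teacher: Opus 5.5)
Your proposal is correct and follows essentially the same route as the paper. Both arguments split off the blocks of non-$L$ factors, which contain no $y_{j_0}$, expand each $a_{t_i}=g_0^{q_{t_i}}$ via \eqref{this:eq:oct312024} into $|q_{t_i}|$ copies of $g_0^{\delta_{t_i}}$ interleaved with $e$, and apply $\eta$ termwise using \eqref{this:eq2:oct312024}; the paper only formalizes your direct bookkeeping as an induction over $p<N$ with the counters $I_p$.
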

\begin{proof}

Associate factors of \eqref{this:master:product:eq:feb8:2026} to obtain:
\begin{equation} \label{this:eq:feb7}
(\prod_{t < t_1} a_t) \fprod a_{t_1} \fprod (\prod_{t_1 < t < t_2} a_t) \fprod a_{t_2} \fprod \cdots (\prod_{{t_{N-1}}< t < t_N} a_t) \fprod a_{{t_N}}. 
\end{equation}

Let us note the following: by the definition of $L$ we have that \begin{equation} \label{this:noyj0:equation}
y_{j_0} \not \in \lett(\prod_{t < t_1} a_t) \cup \bigcup_{1 \leq i < N} \lett(\prod_{{t_{i}}< t < t_{i+1}} a_t)).
\end{equation}
We construct the sequence $w_1 \cdot v_1 \cdot w_2 \cdot v_2 \cdot \cdots \cdot w_n \cdot v_n$ by following the visual pattern given in \eqref{this:eq:feb7}. Our construction is done by induction over $N$ (the cardinality of $L$). 

Starting with the first index, $t_1$, we define \begin{equation} \label{this:eq10:feb07:2026}
w_1 = \begin{cases} e \mbox{ if } t_1 = 1,\mbox{ or} \\
\prod_{t < t_1} a_t \mbox{ otherwise}.
\end{cases}
\end{equation}
Note that $y_{j_0} \not \in \lett(w_1)$ by \eqref{this:noyj0:equation}. Let us now set $v_1 = g_0^{\delta_{t_1}}$, $w_2 = e$, $v_2 = g_0^{\delta_{t_1}}$, ... , $v_{|q_{t_1}|} = g_0^{\delta_{t_1}}$ %, $w_{|q_{t_1}|+1} = e$ 
such that: \begin{equation} \label{this:eq11:feb07:2026}
a_{t_1}= g_0^{q_{t_1}} = \left[ \prod_{0 < i < |q_{t_1}|} (g_0^{\delta_{t_1}} \cdot e) \right] \fprod  g_0^{\delta_{t_1}}=  \left[ \prod_{0 < i < |q_{t_1}|} (v_i \cdot w_{i+1}) \right] \fprod v_{|q_{t_1}|}.
\end{equation}

Let us now apply the map $\eta$ to the sequence $\prod_{t \leq t_1} a_t$ term-by-term, and use the respective expressions in \eqref{this:eq10:feb07:2026} and \eqref{this:eq11:feb07:2026} to deduce that: \begin{equation*}
\prod_{t < t_1} \eta(a_t) = \prod_{t < t_1} a_t = w_1,
\end{equation*}
and from \eqref{this:eq2:oct312024} we then see
\begin{equation*}
e = \eta(g_0^{\delta_{t_1}}) = \eta(a_{t_1}) = \left[ \prod_{0 < i < |q_{t_1}|} (\eta(g_0^{\delta_{t_1}})  \cdot \eta(e)) \right] \fprod \eta(g_0^{\delta_{t_1}}). 
\end{equation*}
This now proves that \begin{equation*}
\prod_{t \leq {t_1}} \eta(a_t) = w_1 \fprod \left[ \prod_{0 < i < |q_{t_1}|} (\eta(v_i) \fprod \eta(w_{i+1})) \right] \fprod \eta(v_{|q_{t_1}|}) = w_1 \fprod \left[ \prod_{0 < i < |q_{t_1}|} (e \fprod w_{i+1}) \right] \cdot e.   
\end{equation*}
Let us now state the induction hypothesis for a given $p < N$. Let us define the integers \begin{equation} \label{this:subI:def}
I_p = \sum_{1 \leq i \leq p} |q_{t_i}|, \mbox{ and } I_{p+1} = \sum_{1 \leq i \leq p+1} |q_{t_i}|.
\end{equation}
Note that the above integers are used to count \emph{how many times} the terms $v_i$ will appear, as we shall see below. Let us now assume that for this $p < N$ we have  \begin{equation} \label{this:eq:46:feb8:2026}
\prod_{t \leq {t_p}} a_t = w_1 \fprod \left[ \prod_{0 < i < I_p} (v_i \fprod w_{i+1}) \right] \fprod v_{I_p},
\end{equation}
where the $w_i$'s and $v_i$'s satisfy conditions (i)--(ii); and
\begin{equation} \label{this:eq:47:feb8:2026}
\prod_{t \leq {t_p}} \eta(a_t) = w_1 \fprod \left[ \prod_{0 < i < I_p} (e \fprod w_{i+1}) \right] \fprod e.
\end{equation}
We now proceed by defining \begin{equation} \label{this:eq:48:feb8:2026}
w_{{I_p}+1} = \begin{cases} e \mbox{ if } t_{p}+1 = t_{p+1} \in L,\mbox{ or} \\
\prod_{t_p < t < t_{p+1}} a_t \mbox{ otherwise}.
\end{cases}
\end{equation}
Once more, $y_{j_0} \not \in \lett(w_{{I_p}+1})$ by \eqref{this:noyj0:equation}. Let us now set $v_{{I_p}+1} = g_0^{\delta_{t_{p+1}}}$, $w_2 = e$, $v_2 = g_0^{\delta_{t_{p+1}}}$,..., $v_{I_p+|q_{t_{p+1}}|} = v_{I_{p+1}} = g_0^{\delta_{t_{p+1}}}$ such that \begin{equation}  \label{this:eq:49:feb8:2026}
a_{t_{p+1}} = g_0^{\delta_{q_{t_{p+1}}}} = \left[ \prod_{0 < i < |q_{t_{p+1}}|} (g_0^{\delta_{t_{p+1}}} \cdot e) \right] \fprod  g_0^{\delta_{t_{p+1}}}=  \left[ \prod_{0 < i < |q_{t_{p+1}}|} (v_{{I_p}+i} \cdot w_{{(I_p)}+i+1}) \right] \fprod v_{I_{p+1}}.
\end{equation}
We now employ \eqref{this:eq:46:feb8:2026} from our induction hypothesis and \eqref{this:eq:48:feb8:2026} 
to see that 
\begin{equation} \label{this:eq:50:feb8:2026}
\prod_{t < t_{p+1}} a_t = \prod_{t \leq {t_p}} a_t \fprod  \prod_{t_p < t < {t_{p+1}}} a_t = \left( w_1 \fprod \left[ \prod_{0 < i < I_p} (v_i \fprod w_{i+1}) \right] \fprod v_{I_p} \right) \fprod w_{{(I_p)}+1}.
\end{equation}
Let us now apply the map $\eta$ to the sequence $\prod_{t \leq t_{p+1}} a_t$ term-by-term by using the respective expressions in \eqref{this:eq:49:feb8:2026} and \eqref{this:eq:50:feb8:2026}.  %to deduce that

First, we use \eqref{this:eq:47:feb8:2026} from our induction hypothesis to obtain: \begin{equation} \label{this:eq:59:feb8:2026}
\prod_{t < t_{p+1}} \eta(a_t) = \prod_{t \leq {t_p}} \eta(a_t) \fprod  \prod_{t_p < t < {t_{p+1}}} \eta(a_t) = \left( w_1 \fprod \left[ \prod_{0 < i < I_p} (e \fprod w_{i+1}) \right] \fprod e \right) \fprod w_{{(I_p)}+1}.
\end{equation}

We now employ \eqref{this:eq2:oct312024} on \eqref{this:eq:49:feb8:2026} to obtain
\begin{equation} \label{this:eq:60:feb8:2026}
e = \eta(g_0^{\delta_{q_{t_{p+1}}}}) = 
\eta(a_{t_{p+1}}) = \left[ \prod_{0 < i < |q_{t_{p+1}}|} (\eta(v_{{I_p}+i}) \cdot \eta(w_{{(I_p)}+i+1})) \right] \fprod \eta(v_{I_{p+1}})
\end{equation}
where,
\begin{equation} \label{this:eq:61:feb8:2026}
\left[ \prod_{0 < i < |q_{t_{p+1}}|} (\eta(v_{{I_p}+i}) \cdot \eta(w_{{(I_p)}+i+1})) \right] \fprod \eta(v_{I_{p+1}}) = \left[ \prod_{0 < i < |q_{t_{p+1}}|} (e \cdot w_{{(I_p)}+i+1}) \right] \fprod e.
\end{equation}

Combining \eqref{this:eq:59:feb8:2026}, \eqref{this:eq:60:feb8:2026} and \eqref{this:eq:61:feb8:2026}, we are now able to deduce that: \begin{equation}
\prod_{t \leq {t_{p+1}}} \eta(a_t) = w_1 \fprod \left[  \prod_{0 < i < I_{p+1}} (e \fprod w_{i+1}) \right] \fprod e 
\end{equation}
This concludes the induction step. To conclude the proof of this claim, we employ the construction above up to the integer $n := I_N$ as defined in \eqref{this:subI:def}.
\end{proof}
\ey
Let us define the word \begin{equation} \label{last:important:term:feb82026}
w_{n+1} = \begin{cases} e \mbox{ if } t_{N} = a_m \\
\prod_{t_{N} < t} a_t \mbox{ otherwise}.
\end{cases}
\end{equation}
By \eqref{this:noyj0:equation}, we have $y_{j_0} \not \in \lett(w_{n+1})$, and more over \begin{equation} \label{pleasegod:feb072026}
w_{n+1} = \eta(w_{n+1}) = \prod_{t_{N} < t} \eta(a_t). 
\end{equation}
\by Set $n := I_N$ as in \eqref{this:subI:def}. \ey Let us now employ Claim \ref{this:key:relabling:claim:feb82026} together with \eqref{last:important:term:feb82026} re-write the product $a_1\fprod a_2\fprod\dots\fprod a_{m}$ as \begin{equation}
h^* = \left(\prod_{t \leq t_N} a_t\right) \fprod \left(\prod_{t_{N} < t} a_t\right) = w_1 \fprod \left( \prod_{0<i<{n+1}} v_i \cdot w_{i+1} \right) 
\end{equation}
By our Claim \ref{this:key:relabling:claim:feb82026} (and the construction of $w_{n+1}$ and $h^*$) we now satisfy \textbf{all} conditions of Hypothesis \ref{this:master:hypothesis}; and so we may apply Lemma \ref{this:new2024:cancelation:lemma}. Following its use, we define the word $w*$, and note that \begin{equation*}
h^* = w^* = w_1 \fprod \left( \prod_{0<i<{n+1}} e \cdot w_{i+1} \right) = \eta(w_1) \fprod \left( \prod_{0<i<{n+1}} \eta(v_i) \cdot \eta(w_{i+1}) \right).
\end{equation*}
Following our Claim \ref{this:key:relabling:claim:feb82026} and \eqref{pleasegod:feb072026} we deduce from the above that \begin{equation*}
\prod_{t \leq m} \eta(a_t) = h^* = \prod_{t \leq m} a_t.
\end{equation*}
This concludes the proof.
\end{proof}

\section{Finite neighbourhood system extension via foreign word conjugation}
\label{extension:section}

\by
First, we begin with a lemma about intermediate enrichments:

\begin{lemma} \label{this:new:remark:20oct24}
Let $X$ be a set, and assume $\nice{U} = \{U_i : i \leq n\}$ satisfies conditions (a), (b) and (c) of Lemma \ref{enrichment:lemma}. We let $B',B \subseteq F(X)$ and $\nice{V'},\nice{V}$ be sets such that: \begin{itemize}
\item[(i)] $B' \subseteq B$, and both $B$ and $B'$ are symmetric subsets of $F(X)$,
\item[(ii)] $\nice{V'} = \{V_i: i \leq n\}$ denotes the $B'$-enrichment of $\nice{U}$,
\item[(iii)] $\nice{V} = \{V_i: i \leq n\}$ denotes the $B$-enrichment of $\nice{U}$,
\end{itemize}
Then, the inclusion \by $V'_i \subseteq V_i$ holds for all $i \leq n$. Moreover, \ey if $h = a_1 \cdot a_2 \cdots a_m$ is a canonical representation for some $h \in V'_i$, then it is also a canonical representation for $h \in V_i$. 
\end{lemma}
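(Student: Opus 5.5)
The plan is a finite reverse induction on $i = n, n-1, \dots, 0$, carried out in lockstep with the inductive definitions \eqref{eq:def:Vn}, \eqref{eq:def:Vi} and Definition \ref{canonical:representation:definition}. We prove the inclusion $V'_i \subseteq V_i$ and the transfer of canonical representations simultaneously at each level $i$, because the second claim at level $i$ uses the first claim at level $i+1$. Throughout, write $\nice{V'} = \{V'_i : i \leq n\}$ for the $B'$-enrichment; this is clearly the intended reading of item (ii).

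\emph{Base case ($i = n$).} By \eqref{eq:def:Vn} we have $V'_n = U_n \cup B'$ and $V_n = U_n \cup B$. Hence (i) gives $V'_n \subseteq V_n$. Every $h \in V'_n$ has only the canonical representation $h = a_1$ with $a_1 = h$, by the basis of the induction in Definition \ref{canonical:representation:definition}. Since $h \in V_n$, the same one-term expression is, by that same rule, a canonical representation of $h$ as an element of $V_n$.

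\emph{Inductive step.} Suppose both claims hold at level $i+1$, where $0 \leq i < n$. The inclusion at level $i$ follows from \eqref{eq:def:Vi} and monotonicity:
\[
V'_i = U_i \cup \bigcup_{x \in \dash{X}} x \fprod V'_{i+1} \fprod V'_{i+1} \fprod x^{-1} \subseteq U_i \cup \bigcup_{x \in \dash{X}} x \fprod V_{i+1} \fprod V_{i+1} \fprod x^{-1} = V_i .
\]
Now let $h = a_1 \fprod \cdots \fprod a_m$ be a canonical representation of some $h \in V'_i$. By Definition \ref{canonical:representation:definition}, it arose from one of two cases.
\begin{itemize}
\item[(1)] $h \in U_i$ and the representation is $h = a_1$. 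Since $U_i \subseteq V_i$, Case 1 of the definition applied to $\nice{V}$ makes this a canonical representation in $V_i$.
\item[(2)] $h = x \fprod u \fprod v \fprod x^{-1}$ with $x \in \dash{X}$, $u, v \in V'_{i+1}$, and the representation is $x$ followed by canonical representations of $u$ and $v$ in $V'_{i+1}$, followed by $x^{-1}$. By the induction hypothesis, $u, v \in V_{i+1}$, and these same strings are canonical representations of $u$ and $v$ in $V_{i+1}$. Case 2 of the definition, applied to $\nice{V}$ with the same $x$, then yields exactly the same string as a canonical representation of $h \in V_i$.
\end{itemize}

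There is no real obstacle here. The only point needing care is that canonical representations are not unique, so the statement must be read as saying that any representation produced by the rules for $\nice{V'}$ is also producible by the rules for $\nice{V}$. This is exactly why both claims have to be carried through the induction together. Hypotheses (a)--(c) and the symmetry of $B, B'$ are not actually needed for this argument; they only guarantee, via Lemma \ref{enrichment:lemma}, that both enrichments are {\fns}s.
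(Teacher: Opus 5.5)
Your proof is correct and follows essentially the same route as the paper. Both use finite reverse induction: the recursive definition of the enrichments gives $V'_i \subseteq V_i$, and the two cases of Definition \ref{canonical:representation:definition} transfer canonical representations (the paper runs these two inductions one after the other rather than together, which makes no difference). Your Case 1 justification via $h \in U_i \subseteq V_i$ is actually slightly more precise than the paper's, which cites only $h \in V'_{j-1} \subseteq V_{j-1}$.
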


\begin{proof}
Our proof is by finite-reverse induction on $n$ (the length of the sequence $\nice{U}$). 

By \eqref{eq:def:Vn}, we have that $V'_n = U_n \cup B'$, and $V_n = U_n \cup B$. This implies that $V'_n \subseteq V_n$ holds by our hypothesis on $B$ and $B'$. Assume that for some $j \in \{1,\dots,n\}$ the inclusion $V'_j \subseteq V_j$ holds. Our goal is now to prove that $V'_{j-1} \subseteq V_{j-1}$ holds. 

If we expand \eqref{eq:def:Vi} for $V'_{j-1}$ and $V_{j-1}$ we have: \begin{equation*}
V'_{j-1}=U_{j-1} \cup\bigcup_{x\in \dash{X}} x\fprod V'_{j}\fprod V'_{j}\fprod x^{-1} \subseteq 
V'_{j-1} \subseteq U_{j-1} \cup\bigcup_{x\in \dash{X}} x\fprod V_{j}\fprod V_{j}\fprod x^{-1} = 
V_{j-1},
\end{equation*}
where the middle inclusion holds by induction hypothesis. This concludes our induction.

The last claim about presevation of the canonical representations is proved in the same way. For $h \in V'_n$ we have that $h = a_1$ itself is a canonical representation, where $h = a_1 \in V_n$ by the previous proof. Thereby, any canonical representation of a word in $V'_n$ is a canonical representation for the same word in $V_n$. Assume for $j \in \{1,\dots,n\}$ we have proven that if $h = a_1 \cdot a_2 \cdot \cdots \cdot a_m$ is a canonical representation for $h \in V'_{j}$, then it is also a canonical representation for $h$ considered in $V_j$. 

Let $h \in V'_{j-1}$ be arbitrary. Our goal is prove that all possible canonical representations of $h$ with respect to $V'_{j-1}$ are also canonical representations of $h$ with respect to $V_{j-1}$. Following the construction of canonical representations, we consider the two cases according to Definition \ref{canonical:representation:definition}.

{\sl Case 1\/}. If $h = a_1$ is a canonical representation of $h$ with respect to $V'_{j-1}$, then $h = a_1 \in V'_{j-1} \subseteq V_{j-1}$ is a canonical representation of $h$. 

{\sl Case 2\/}. Assume the canonical representation of $h$ with respect to $V'_{j-1}$ is of the form
\begin{equation} \label{this:eq:jan20of2026}
h=x\fprod 
b_1\fprod b_2\fprod\cdots\fprod b_{m_1}
\fprod
c_1\fprod c_2\fprod\cdots\fprod c_{m_2}
\fprod x^{-1},
\end{equation}
for a suitable $x \in \bar{X}$ and $u,v \in V'_{j}$ with canonical representations \begin{equation*}
u=b_1\fprod b_2\fprod\cdots\fprod b_{m_1}
\
\text{ and }
\ 
v=c_1\fprod c_2\fprod\cdots\fprod c_{m_2}
\end{equation*}
with respect to $V'_j$. By induction hypothesis, the representations $u = b_1\fprod b_2\fprod\cdots\fprod b_{m_1}$ and $v = c_1\fprod c_2\fprod\cdots\fprod c_{m_2}$ are also canonical representations of $u$ and $v$ respectively when considered in $V_{j}$.

By Case 2 of Definition \ref{canonical:representation:definition}, the representation of $h$ given in \eqref{this:eq:jan20of2026}
is now canonical for $h$ when considered in $V_j$. This concludes our induction, and the proof of this lemma.
\end{proof}

\ey

\begin{corollary}
\label{trivial:enrichment:bound}
Let $X$ and $Y$ be sets such that $X\not=\emptyset$ and $X\subseteq Y$. Let $\nice{U}=\{U_i:i\le n\}$ be a \fns\ for $F(X)$ and let $\nice{V'}=\{V'_i:i\le n\}$ be the \by (cyclic) \ey $\{e\}$-enrichment of $\nice{U}$
in $F(Y)$. Then
\begin{equation}
\label{sum:of:letters:20oct24}
\sum_{l=1}^m |\lett(a_l)|\le |X|\cdot 4^{n-i}
\end{equation}
whenever $i \leq n$
and 
$h=a_1\fprod a_2\fprod\cdots\fprod a_m$ is a
canonical representation of $h\in V'_i$ as in Definition 
\ref{canonical:representation:definition}.
\end{corollary}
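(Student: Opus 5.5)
The plan is to deduce this directly from Lemma \ref{extension:set:c} via Lemma \ref{this:new:remark:20oct24}, with no new computation. Let $\nice{V}=\{V_i:i\le n\}$ be the cyclic $(Y\setminus X)$-enrichment of $\nice{U}$ in $F(Y)$. This is the $B$-enrichment with $B=\bigcup_{y\in Y\setminus X}\grp{y}$. The cyclic $\{e\}$-enrichment $\nice{V'}$ is the $B'$-enrichment with $B'=\grp{e}=\{e\}$.

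First I will check the hypotheses of Lemma \ref{this:new:remark:20oct24}, working in the alphabet $Y$. Since $\nice{U}$ is a \fns\ for $F(X)\subseteq F(Y)$, it satisfies items (a), (b) and (c) of Lemma \ref{enrichment:lemma} in $F(Y)$. The set $B$ is a union of cyclic subgroups, so it is symmetric and contains $e$. Hence $B'=\{e\}\subseteq B$, and both sets are symmetric.

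Lemma \ref{this:new:remark:20oct24} then gives two things. First, $V'_i\subseteq V_i$ for all $i\le n$. Second, every canonical representation $h=a_1\fprod\cdots\fprod a_m$ of an element $h\in V'_i$ is also a canonical representation of $h$ as an element of $V_i$. Applying Lemma \ref{extension:set:c} to this representation yields $\sum_{l=1}^m|\lett(a_l)|\le |X|\cdot 4^{n-i}$, which is \eqref{sum:of:letters:20oct24}.

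If $Y=X$, then $B=\{e\}=B'$ and the two enrichments coincide, so the argument still applies.

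The only delicate point is one of consistency. Both enrichments must be formed over the same alphabet $\dash{Y}$, since the conjugating letters range over $\dash{Y}$ in each case. Lemma \ref{this:new:remark:20oct24} is then applied with its ambient set taken to be $Y$, not $X$. Once this is fixed, the induction in Lemma \ref{this:new:remark:20oct24} compares the two enrichments step by step, and the bound transfers verbatim.
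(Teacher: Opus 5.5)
Your proposal is correct and is essentially the paper's argument: treat $\nice{V'}$ and the cyclic $(Y\setminus X)$-enrichment as the $\{e\}$- and $B$-enrichments with $\{e\}\subseteq B$, use Lemma \ref{this:new:remark:20oct24} to transfer canonical representations from $V'_i$ to $V_i$, and then apply Lemma \ref{extension:set:c}. One small correction to your edge case: when $Y=X$, the set $B$ is the empty union, so $B=\emptyset$ rather than $\{e\}$; the two enrichments still coincide because $e\in U_n$, so your conclusion stands.
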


\begin{proof}
Recall that the cyclic $(Y \setminus X)$-enrichment of $\nice{U}$ is with respect to the set \by $B = \bigcup_{y \in Y \setminus X} \grp{y}$ \ey according to Definition \ref{def:enrichment}. This implies that \by $B' = \{e\} \subseteq B$. \ey Let $i \leq n$ be arbitrary. By Lemma \ref{this:new:remark:20oct24}, if $h = a_1 \cdot a_2 \cdots a_m$ is a canonical representation of $h \in V'_i$, then it is also a canonical representation of $h \in V_i$ as well. We may now apply Lemma \ref{extension:set:c} to directly deduce \eqref{sum:of:letters:20oct24}.
\end{proof}

\begin{lemma} \label{this:lemma:oct26}
Let $X$ \by be a subset of $Y$ such that \ey $k = | Y \setminus X| \geq 2^{|X| \cdot 4^n}$. 
Take the Setting \ref{this:master:set} $(X,Y,g,h,k)$ for arbitrary $g \in F(X) \setminus \{e\}$ and $h \in F(X)$. Let $g_0 \in F(Y)$ be an arbitrary word containing $f$ as a sub-word of its irreducible form. 
Let $\nice{U} = \{U_i : i \leq n \}$ be a {\fns} on $F(X)$, and let $\nice{V}' = \{V'_i: i \leq n\}$ and $\nice{V} = \{V_i: i \leq n\}$ be its respective $B' = \{e\}$ and $B = \grp{g_0}$ enrichments in F(Y). \by Then,  \begin{equation}
(B \setminus B') \cap (\dash{Y} \cup \bigcup_{i=1}^n V'_i) = \emptyset. 
\end{equation}
\ey
\end{lemma}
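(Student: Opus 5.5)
The aim is to show that no non-trivial power $g_0^q$ ($q\neq 0$) lies in $\dash{Y}$ or in any $V'_i$ with $1\le i\le n$. The strategy is to compare the number of distinct letters such a power must contain with the bound on letters available in $V'_i$ from Corollary \ref{trivial:enrichment:bound}.

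The first step is a lower bound on the letters of powers of $g_0$. Since $f=\prod_{i=1}^k y_i$ is a subword of the irreducible form of $g_0$, I will argue that $\lett(g_0^q)\supseteq\{y_1,\dots,y_k\}$ for every $q\neq 0$. Write $g_0=c\wprod g_0'\wprod c^{-1}$ with $g_0'$ cyclically reduced. Then $g_0^q=c\wprod (g_0')^q\wprod c^{-1}$ is irreducible, and its letter set equals $\lett(c)\cup\lett(g_0')=\lett(g_0)$. That letter set contains all $k$ distinct foreign letters $y_1,\dots,y_k$, so $|\lett(g_0^q)|\ge k\ge 2^{|X|\cdot 4^n}$. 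In particular $|\lett(g_0^q)|\ge 2$, because $|X|\ge 1$ forces the bound to be at least $2^4$. Hence $g_0^q\notin\dash{Y}$, since every element of $\dash{Y}$ has at most one letter.

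The second step is an upper bound on the letters of elements of $V'_i$. Take $h'\in V'_i$ with $1\le i\le n$ and fix a canonical representation $h'=a_1\fprod\cdots\fprod a_m$ as in Definition \ref{canonical:representation:definition}. Lemma \ref{letters:and:products}(ii) gives $\lett(h')\subseteq\bigcup_l\lett(a_l)$, and Corollary \ref{trivial:enrichment:bound} then yields
\[
|\lett(h')|\le\sum_l|\lett(a_l)|\le |X|\cdot 4^{n-i}<|X|\cdot 4^n.
\]
This bound does not see the conjugating letters $x\in\dash{Y}$ that appear in Case 2 of the canonical representation, and those letters could be foreign letters $y_j$. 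I will handle them in one of two ways. The first option is to check that the counting in Lemma \ref{extension:set:c} already includes the conjugating letters as terms $a_l$, since the representation lists $x$ and $x^{-1}$ explicitly as factors. The second option, if that fails, is a direct induction on the reverse index. With $\ell_i:=\max\{|\lett(h')|:h'\in V'_i\}$, we have $\ell_n\le|X|$ because $V'_n=U_n\cup\{e\}\subseteq F(X)$, and $\ell_{i}\le 2\ell_{i+1}+1$, which still gives $\ell_i\le (|X|+1)2^{n-i}$. That is far below $2^{|X|\cdot4^n}$. Either route gives $|\lett(h')|<2^{|X|\cdot 4^n}\le k$.

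Combining the two steps, no $g_0^q$ with $q\neq0$ can equal any $h'\in V'_i$ for $i\ge1$, and none lies in $\dash{Y}$. Since $B\setminus B'=\grp{g_0}\setminus\{e\}$, the intersection is empty. I expect the main obstacle to be the bookkeeping in the second step, namely making sure the conjugating letters $x\in\dash{Y}$ are correctly accounted for in the letter bound. The cyclic-reduction argument in the first step is routine, and the hypothesis $k\ge 2^{|X|\cdot 4^n}$ leaves ample room for either way of counting.
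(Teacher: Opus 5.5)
Your proposal is correct and is essentially the paper's proof: the cyclic-reduction observation gives $\lett(g_0^q)=\lett(g_0)\supseteq\{y_1,\dots,y_k\}$ for $q\neq 0$, and Corollary \ref{trivial:enrichment:bound} with Lemma \ref{letters:and:products} gives $|\lett(h')|\le |X|\cdot 4^{n-i}<2^{|X|\cdot 4^n}$ for $h'\in V'_i$. Your worry about conjugating letters is unnecessary, since Definition \ref{canonical:representation:definition} lists $x$ and $x^{-1}$ as factors $a_l$ and so they are already counted in that sum (your first option, which is what the paper uses; your backup induction is also valid).
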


\begin{proof}
\by Let us employ the following observation about words on a free group: the word $g_0$ admits an irreducible decomposition of the form $w \wprod v \wprod w^{-1}$, where $w$ is the maximal \emph{initial} subword of $g_0$ for which $w^{-1}$ appears at the end of $g_0$ in its irreducible form. This ensures that $v$ undergoes no cancelations with itself, namely, $v^2 = v \wprod v$ in irreducible form. This implies that, for any $k \in \Z \setminus \{0\}$ the word $g^k_0$ admits the decomposition $w \wprod v^k \wprod w^{-1}$ in irreducible form. In particular, this implies that $\lett(g^k_0) = \lett(g_0)$.
\ey

Given arbitrary $g \in \grp{g_0} \setminus \{e\}$, the word $g$ contains a full copy of $f$ as sub-word in its irreducible form, implying that $|\lett(g)| \geq 2^{|X| \cdot 4^n}$ by construction of $f$ and the previous observation.

Now, let $e \neq h \in \dash{X} \cup \bigcup_{i=1}^n V'_i$ be arbitrary. If $h \in \dash{Y}$, then $|\lett(h)| = 1$ holds, implying that $h \neq g$ for any $g \in \grp{g_0} \setminus \{e\}$ by the above. If $h \in V'_i$ for some $i \in \{1,\dots,n\}$, then Corollary \ref{trivial:enrichment:bound} implies that $\sum_{l=1}^m |\lett(a_l)|\le |X|\cdot 4^{n-i}$ where $h = a_1\fprod a_2\fprod\cdots\fprod a_m$ is a canonical representation of $h \in V'_i$. By Lemma \ref{letters:and:products} we now have \begin{equation*}
|\lett(h)| \leq \sum_{l=1}^m |\lett(a_l)|\le |X|\cdot 4^{n-i} < 2^{|X| \cdot 4^n} \leq |\lett(g)| 
\end{equation*}
where $g \in \grp{g_0} \setminus \{e\}$ is arbitrary. This implies that $h \not \in \grp{g_0} \setminus \{e\}$, proving our assertion.
\end{proof}

\begin{lemma} \label{this:corollary:oct312024:main}
\by Let $X$ be a subset of $Y$ such that $k = | Y \setminus X| \geq 2^{|X| \cdot 4^n}$, and take the Setting \ref{this:master:set} $(X,Y,g,h,k)$ for arbitrary $g \in F(X) \setminus \{e\}$ and $h \in F(X)$. \ey Then, any {\fns} $\nice{U} = \{U_i : i \leq n \}$ on $F(X)$ admits an extension to a {\fns} $\nice{V} = \{V_i : i \leq n\}$ on $F(Y)$ such that $g_0 \in V_n$. 
\end{lemma}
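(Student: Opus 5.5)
The plan is to take $\nice{V}$ to be the cyclic $\{g_0\}$-enrichment of $\nice{U}$ in $F(Y)$, that is, the $B$-enrichment with $B=\grp{g_0}$. Then $g_0\in B\subseteq V_n$ by \eqref{eq:def:Vn}. The proof then has two parts: showing that $\nice{V}$ is a {\fns} for $F(Y)$, and showing that it extends $\nice{U}$.

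\textbf{$\nice{V}$ is a {\fns}.} The set $B=\grp{g_0}$ is symmetric and contains $e$. The family $\nice{U}$ satisfies (a)--(c) of Lemma \ref{enrichment:lemma} when regarded inside $F(Y)$. Hence Lemma \ref{enrichment:lemma}, applied in $F(Y)$, gives that $\nice{V}$ is a {\fns} for $F(Y)$.

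\textbf{$\nice{V}$ extends $\nice{U}$.} Conditions (i) and (ii) of Definition \ref{SPSb} are immediate, so only $V_i\cap F(X)=U_i$ needs proof. The inclusion $U_i\subseteq V_i$ is clear from \eqref{eq:def:Vn}--\eqref{eq:def:Vi}.

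For the reverse inclusion, let $\nice{V}'$ be the $\{e\}$-enrichment of $\nice{U}$ in $F(Y)$. By Lemma \ref{this:lemma:oct26}, $(B\setminus\{e\})\cap(\dash{Y}\cup\bigcup_{i=1}^n V'_i)=\emptyset$, which is exactly the hypothesis of Lemma \ref{reduction:lemma} with $B'=\{e\}$. So take $h\in V_i\cap F(X)$ and a canonical representation $h=a_1\fprod\cdots\fprod a_m$. By Lemma \ref{reduction:lemma}, $\eta(a_1)\fprod\cdots\fprod\eta(a_m)$ is a canonical representation of some $h'\in V'_i$.

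Next we want $h=h'$, which is where Corollary \ref{workingman:lemma:7.2} enters. Its hypothesis requires a letter $y_{j_0}$ such that $y_{j_0}\in\lett(a_l)$ iff $a_l\in\grp{g_0}\setminus\{e\}$. Every nontrivial power of $g_0$ contains all letters of $f$, as shown in the proof of Lemma \ref{this:lemma:oct26}. The remaining $a_l$, those with $\eta(a_l)=a_l$, are exactly the terms of a canonical representation in $V'_i$. By Corollary \ref{trivial:enrichment:bound} their letters total at most $|X|\cdot 4^{n}<k$. So some $y_{j_0}\in Y\setminus X$ appears in none of them, and the corollary yields $h=h'\in V'_i$.

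\textbf{Main obstacle: showing $V'_i\cap F(X)=U_i$.} This is the step I expect to be delicate. One route is Lemma \ref{cyclic:C-enrichment} with $C=\{e\}\subseteq\grp{Y\setminus X}$, which says the $\{e\}$-enrichment extends $\nice{U}$. If one prefers not to rely on that, there is a direct argument by reverse induction on $i$: a conjugate $y\fprod u\fprod v\fprod y^{-1}$ with $y\in Y\setminus X$ lies in $F(X)$ only when $u\fprod v=e$. This uses the fact that elements of $V'_{i+1}$ involve foreign letters only through conjugation.

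Either way $h\in V'_i\cap F(X)=U_i$, which gives $V_i\cap F(X)=U_i$ and completes the extension.
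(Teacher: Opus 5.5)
Your proof is correct and is essentially the paper's argument: the $\grp{g_0}$-enrichment, Lemma \ref{this:lemma:oct26} feeding Lemma \ref{reduction:lemma}, Corollary \ref{workingman:lemma:7.2}, and Lemma \ref{cyclic:C-enrichment} with $C=\{e\}$; the one difference is that you apply Lemma \ref{reduction:lemma} and Corollary \ref{trivial:enrichment:bound} before Corollary \ref{workingman:lemma:7.2}, which lets you produce the letter $y_{j_0}$ that corollary requires, whereas the paper only asserts this hypothesis after Claim \ref{this:claim:0ct312024}. Drop the optional ``direct'' argument in your last paragraph, because its key claim is false as stated: for $i+2\le n$, $y\in Y\setminus X$ and $a\in U_{i+2}\setminus\{e\}$, the element $u=y^{-1}\fprod a\fprod y$ lies in $V'_{i+1}$, and with $v=e$ one gets $y\fprod u\fprod v\fprod y^{-1}=a\in F(X)$ although $u\fprod v\neq e$.
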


\begin{proof}
Let us set $\nice{V}' = \{V'_i: i \leq n\}$ and $\nice{V} = \{V_i: i \leq n\}$ be the respective $B' = \{e\}$ and $B = \grp{g_0}$ enrichments of $\nice{U}$ as in Lemma \ref{this:lemma:oct26}. \by By \eqref{eq:def:Vn} \ey we have $g_0 \in V_n$. Recall that $\nice{V'}$ is an extension of $\nice{U}$ by setting $C = \{e\}$ in Lemma \ref{cyclic:C-enrichment}. Since the inclusion $U_i \subseteq V_i$ always holds for every $i \leq n$, \by it suffices \ey to prove that $V_i \cap F(X) \subseteq U_i$ holds for every $i \leq n$. Let $\eta: F(Y) \to F(Y)$ be the map defined in Lemma \ref{reduction:lemma}. 

Let $h \in V_i \cap F(X)$ be arbitrary and assume $h = a_1 \fprod a_2 \fprod \cdots \fprod a_m$ is a canonical representation of $h$. 
Define the set \begin{equation}
L = \{l \in \{1,\dots,m\} : \eta(a_l) \neq a_l \}.
\end{equation}
\by The proof of the following claim is straightforward: 
\begin{claim} \label{this:claim:0ct312024}
$L = \{l \in \{1,\dots,m\} : a_l \in \grp{g_0} \setminus \{e\}\}$. %
\end{claim}
\ey

By Claim \ref{this:claim:0ct312024} we now satisfy all the conditions of Lemma \ref{workingman:lemma:7.2}, allowing us to deduce that \begin{equation} \label{this:eq:100:opct312024}
h = \eta(a_1) \fprod \eta(a_2) \fprod \cdots\fprod \eta(a_m).
\end{equation} 
Since $g_0$ contains a full instance of $f$ in its irreducible form, Lemma \ref{this:lemma:oct26} implies that $$(B \setminus B') \cap (\dash{X} \cup \bigcup_{i=1}^n V'_i) \subseteq \{e\}.$$
We now apply Lemma \ref{reduction:lemma} to deduce that $\eta(a_1) \fprod \eta(a_2) \fprod \cdots\fprod \eta(a_m)$ is the canonical representation of an element in $V'_i$. Since $\nice{V'}$ is an extension of $\nice{U}$, from \eqref{this:eq:100:opct312024} we deduce that \begin{equation*}
h = \eta(a_1) \fprod \eta(a_2) \fprod \cdots\fprod \eta(a_m) \in V'_i \cap F(X) = U_i.
\end{equation*}
This proves that $V_i \cap F(X) \subseteq U_i$ holds for arbitrary $i \leq n$, showing that $\nice{V}$ extends $\nice{U}$. 
\end{proof}

\section{Proof of Lemma \ref{topological:simplifity:set}}
\label{final:final:proofsection}

\begin{proof}
Let $n \in \N$, $S \in [X]^{< \omega}$, $g \in F(X) \setminus \{e\}$ and $h \in F(X)$ be arbitrary. Let us prove that \by the set \ey \begin{equation*} 
E_{n,S,g,h} = \{q \in \P: n \leq n^q, S \subseteq X^q \text{ and } (\mathrm{Conj}(g) \cdot h) \cap U_{n^q}^q \neq \emptyset \}.
\end{equation*}
is dense in $\P$. Take an arbitrary $p \in \P$. We employ already proven dense sets from Lemma \ref{dense:sets} to simplify matters. Select some $r \in \P$ for which $n \leq n^r$ and $S \subseteq X^r$ hold, $g,h \in F(X^r)$ and $r$ is an extension of $p$. Since $X$ is infinite, let us select $Y \subseteq X$ such that $| Y \setminus X^r| \geq 2^{|X^r| \cdot 4^n}$.  Let us enumerate $Y \setminus X^r = \{y_1, \dots, y_k\}$ and let $f = \prod_{i=1}^k y_i$. If we now consider \by $g_0 = f \cdot g \cdot f^{-1} \cdot h \in \mathrm{Conj}(g) \cdot h$, \ey Lemma \ref{this:corollary:oct312024:main} allows us to find an extension $\nice{V} = \{V_i : i \leq n^r\}$ for $\nice{U}^r$ on $F(Y)$ such that $g_0 \in V_{n^r}$. This inclusion implies that \by $$(\mathrm{Conj}(g) \cdot h) \cap V_{n^r} \neq \emptyset.$$ \ey
\by Let us now define $q = \langle\!\langle X^q,n^q,\nice{U}^q \rangle\!\rangle  \in \P$ as follows: \ey set $X^q = Y$, $n^q = n^r$ and $\nice{U}^q = \nice{V}$ where given $i \leq n^q = n^r$ one has $U_i^{q} = V_i$. From this re-indexing, we have $(\mathrm{Conj}(g) \cdot h) \cap U^q_{n^q} \neq \emptyset$. Since the conditions $n \leq n^q$ and $S \subseteq X^q$ also hold, this proves that $q \in E_{n,S,g,h}$. \by Since $q$ extends $r$, and $r$ itself extends $p$, we deduce that $q$ extends $p$, thereby proving the density of $E_{n,S,g,h}$. \ey  
\end{proof}

\end{document}